\def\e{\epsilon}
\def\R{{ I\!\!R}}
\def\II{{\rm I\kern-0.5exI}}
\def\III{{\rm I\kern-0.5exI\kern-0.5exI}}
\numberwithin{equation}{section}
\newtheorem{theorem}{Theorem}[section]
\newtheorem{remark}[theorem]{Remark}
\newtheorem{lemma}[theorem]{Lemma}
\newtheorem{proposition}[theorem]{Proposition}
\newtheorem{corollary}[theorem]{Corollary}
\begin{document}
\title{Two phase Stefan-type problem: Regularization near initial data by phase dynamics.}
\author{Sunhi Choi\thanks{Department of Mathematics, University of Arizona} and Inwon Kim\thanks{Department of Mathematics, UCLA.  I.K. is partially supported by NSF 0970072 }}
\maketitle
\begin{abstract}
In this paper we investigate the regularizing behavior of two-phase Stefan problem near initial data. The main step in the analysis is to establish that in any given scale, the scaled solution is very close to a Lipschitz profile in space-time. We introduce a new decomposition argument to generalize the preceding ones (\cite{cjk1}-\cite{cjk2} and \cite{ck}) on one-phase free boundary problems.
\end{abstract}
\section{Introduction}

 Consider $u_0(x):B_R(0)\to \R$ with $R>>1$ and $u_0\geq -1$, $|\{u_0=0\}|=0$ and $u_0(x)=-1$ on $\partial B_R(0)$.
 (See Figure 1.)
\begin{figure}
\center{\epsfig{file=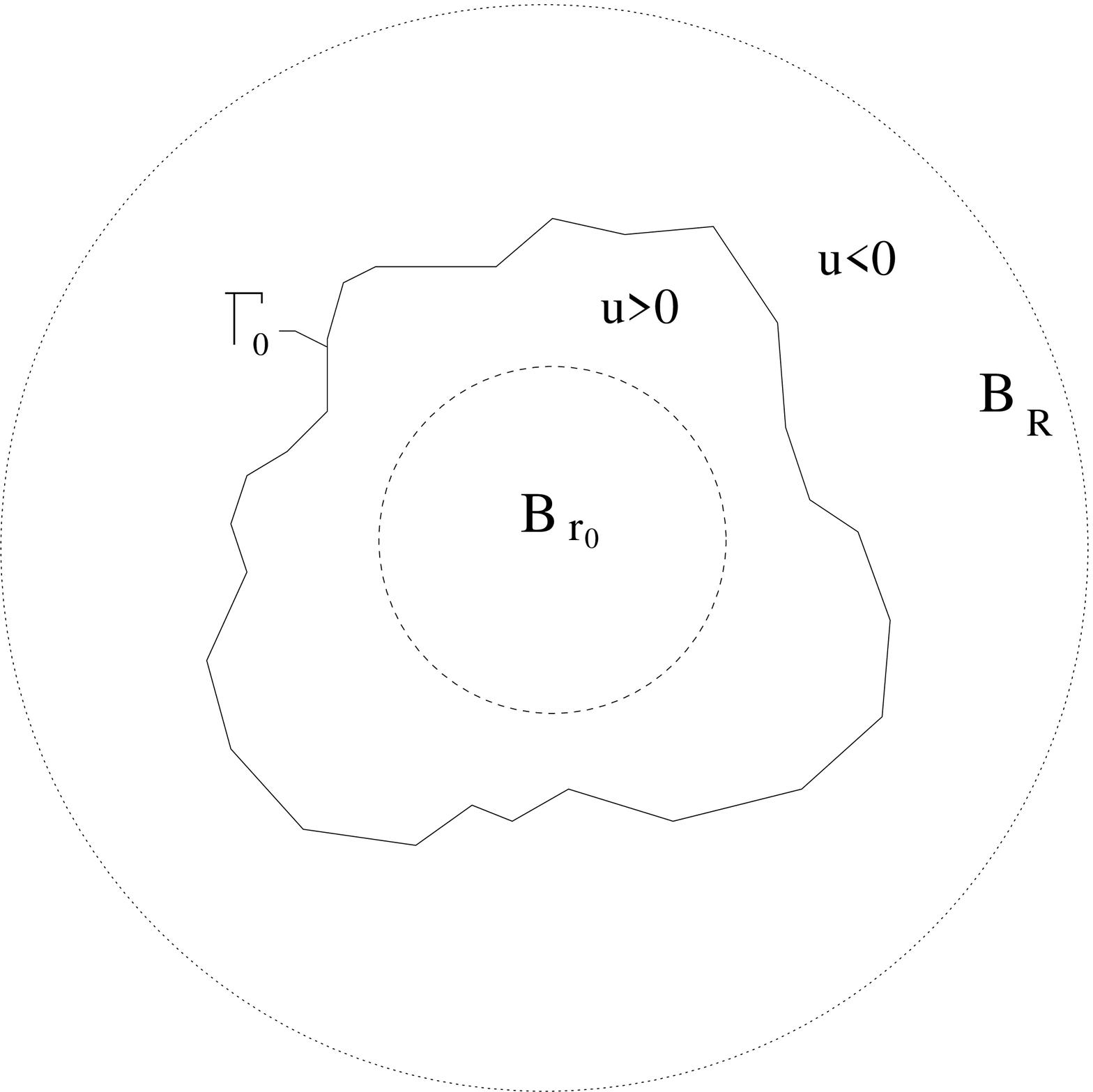,height=2.0in}} \center{Figure
1: Initial setting of the problem}
\end{figure}
The two-phase Stefan problem can be formally written as
$$
\left\{\begin{array} {lll}
u_t-\Delta u = 0 &\hbox{ in } & \{u>0\} \cup \{u<0\}\\ \\
\displaystyle{\frac{u_t}{|Du^+|}}=|Du^+|-|Du^-| &\hbox{ on } & \partial\{u>0\}\\ \\
u(\cdot,0)=u_0&&\\ \\
u = -1 &\hbox{ on }& \partial B_R(0)
\end{array}\right.\leqno (ST2)
$$
where $u^+$ and $u^-$ respectively denotes the positive and negative parts of $u$, i.e,
$$
u^+:=\max(u,0)\hbox{ and } u^-:=-\min (u,0).
$$
The classical Stefan problem describes the phase transition between
solid/liquid or liquid/liquid interface (see \cite{m} and also
\cite{opr}.) In our setting, we consider a bounded domain
$\Omega_0\subset B_R(0)$ and the initial data $u_0(x)$ such that
$\{u_0>0\} = \Omega_0$ and $\{u_0<0\} = \R^n-\Omega_0$. To avoid
complicity at the infinity,  we consider the problem in the domain
$Q = B_R(0)\times [0,\infty)$, with Dirichlet condition
$$
u = f(x,t)<0\hbox{ on }\partial B_R(0),
$$
where $f(x,t)$ is smooth. In (ST2) we have set $f=-1$ for
simplicity. Since our initial data will be only locally H\"{o}lder
continuous, we employ the notion of viscosity solutions to discuss
the evolution of the problem. Viscosity solutions for (ST2) is
originally introduced by \cite{acs1} (also see \cite{cs}). As for
existence and uniqueness of viscosity solutions,  we refer to
\cite{kp}.

\vspace{5 pt}
Note  that the second condition of $(ST)$ states
that the normal velocity $V_{x,t}$ at each free boundary point
$(x,t)\in\partial\{u>0\}$ is given by
$$
V_{x,t}=|Du^+|-|Du^-|(x,t)=(Du^+(x,t)-Du^-(x,t))\cdot\nu_{x,t},
$$
where $\nu_{x,t}$ denotes the spatial unit normal vector of
$\partial\{u>0\}$ at $(x,t)$, pointing inward with respect to the \textit{positive phase} $\{u>0\}$.

\vspace{10 pt}

In this paper we investigate the regularizing behavior of the \textit{ free
boundary} $\partial\{u>0\}$. Our main result states that when $\Gamma_0$ has  no sharp corner,
then the free boundary immediately regularizes after $t=0$, and
stays regular for a small amount of time. Note that, in general, after some
time the free boundary may move away from its initial profile and
develop singularities by topological changes, such as merging of two
boundary parts. Whether this happens with star-shaped initial data
is an open question (see Remark~\ref{star-shaped}.)

\vspace{5 pt}

The well-known results of \cite{acs1}-\cite{acs2} states that if a
solution as well as its free boundary of $(ST2)$ stays close to a
locally Lipschitz profile in a unit space-time neighborhood, then
the solution is indeed smooth in a smaller neighborhood.  Hence the
main step in our analysis is to prove that the free boundary
$\partial\{u>0\}$ stays close to a locally Lipschitz profile over a
unit time interval. Indeed proving this step has been the main challenge in the
previous work of the authors and Jerison (\cite{cjk1}, \cite{cjk2}, \cite{ck}) on the
study of one-phase free boundary problems. Once this step is
established, using the fact that $u$ is a caloric function in {\it almost
Lipschitz } domain, we will have some control over the behavior of
re-scaled solutions following the arguments in \cite{cjk1}.  Then
the appropriate modification of iteration arguments taken in
\cite{acs1}-\cite{acs2}  applies to derive further regularity
results (see section 5).  In extension of the ideas from one-phase case to our setting, the main obstacle lies in the competition between
fluxes of positive and negative phase: to overcome this, we introduce a new decomposition procedure which we explain below.

\vspace{10pt}

Before discussing our result in detail,  let us introduce precise conditions on the initial data.

\begin{itemize}
\item[(I-a)]$\Omega_0$ and $u_0$ are star-shaped  with respect
 to a ball $B_{r_0}(0)\subset \Omega_0$.
\end{itemize}
Observe that then the  {\it Lipschitz constant} $L$ of $\partial
\Omega_0$ is determined by $r_0$ and $d_0$, where $d_0:=
\sup\{d(x,B_{r_0}(0): x\in\partial\Omega_0\}$: i.e.,  there exist $h
= h(r_0)$ and $L=L(\frac{r_0}{d_0})$ such that for any
$x_0\in\partial\Omega_0$, after rotation of coordinates one can
represent
\begin{equation}\label{initialLipschitz}
B_h(x_0)\cap\Omega_0 = \{(x',x_n): x'\in\R^{n-1}, x_n\leq f(x)\}\hbox{ with } Lip f \leq L.
\end{equation}
For simplicity of the presentation we set $h = 1$.

\vspace{10pt}

 For a locally Lipschitz
domain such as $\Omega_0$, there exist growth rates
$0<\beta<1<\alpha$ such that the following holds: Let $H$ be a
positive harmonic function in $\Omega_0 \cap B_2(x)$, $x \in
\partial \Omega_0$, with Dirichlet condition on $\partial \Omega_0 \cap
B_2(x)$, and with value $1$ at $x-e_n$. (Here let $e_n$ be the
direction of the axis for the Lipschitz graph near $x$.) Then for $
x-se_n \in \Omega_0 \cap B_1(x)$
\begin{equation}\label{growth}
 s^{\alpha}\leq H(x -se_n) \leq s^{\beta}.
\end{equation}
Now we precisely describe the range of the Lipschitz constant $L$ of
$\Omega_0$.
\begin{itemize}
\item[(I-b)]  $L<L_n$ for a sufficiently small dimensional constant  $L_n$  so  that
$$
5/6 \leq \beta < \alpha \leq 7/6.
$$
\end{itemize}
The remaining conditions are on the regularity of $u_0$.
\begin{itemize}
\item[(I-c)] $-N_0 \leq \Delta u_0 \leq N_0$ in $\Omega_0 \cup (B_R(0) -\Omega_0)$,\\
\item[(I-d)] For $x \in \partial \Omega_0$, $e_n=x/|x|$ and small $s>0$ (for $0<s<1/10$),
$$|D u_0(x \pm se_n)| \geq C s^{\alpha-1}.$$
\end{itemize}
Note that (I-c) and (I-d) holds for $u_0$ which is smooth in its positive and negative phases and is harmonic near the initial free boundary: i.e., $-\Delta u_0 = 0$ in the set
$(\{u_0>0\} \cup\{u_0<0\}) \cap\{x: d(x,\partial\Omega_0) \leq 1\}$.
\vspace{ 5 pt}

For a function $u(x,t):\R^n\times [0,\infty) \to \R$, let us denote
$$
\Omega(u):=\{u>0\}, \quad \Omega_t(u):=\{u(\cdot, t)>0\}
$$
and
$$  \Gamma(u):=\partial\{u>0\}, \quad  \Gamma_t(u):=\partial\{u(\cdot, t)>0\}.
$$
Since $\Gamma_0=\partial\{u(\cdot,0)<0\}$ in our setting, the property is preserved for later times,
i.e.,
$$
\Gamma_t(u)=\partial\{u(\cdot,t)>0\} = \partial \{u(\cdot,t)<0\}\hbox{ for all } t>0
$$
(see \cite{rb}, \cite{gz}, and \cite{kp}).

In the one-phase case the problem can be written as follows:
$$
\left\{\begin{array} {lll}
u_t-\Delta u = 0 &\hbox{ in } & \{u>0\} \\ \\
\displaystyle{\frac{u_t}{|Du|}}=|Du| &\hbox{ on } & \partial\{u>0\}\\ \\
u(\cdot,0)=u_0^+.
\end{array}\right.\leqno (ST1)
$$
In \cite{ck} the following has been proved for (ST1).

\begin{theorem}\label{thm:ckmain}([CK] Theorem 0.1.)
 Suppose $u$ is a solution of $(ST1)$ in $B_2(0)\times [0,1]$, $0 \in \Gamma_0(u)$, with the initial data $u_0\geq 0$
satisfying (I-b), (I-c) and (I-d) in $B_2(0)$. Suppose
$u_0(-e_n)=1$.
 If $\sup_{B_2(0)\times [0,1]}u \leq M_0$, then there exists a small $s>0$ depending on $N_0$,
 $M_0$ and $n$ such that the free boundary $\Gamma_t(u)$ becomes
smooth and averages out in $B_{s}(0)$. More precisely,
\begin{itemize}
\item[(a)]  The free boundary $\Gamma_t(u)$ is $C^1$ and is a
Lipschitz graph with respect to $e_n$ with Lipschitz constant
$L'<L_n$ in $B_{s}(0) \subset \R^n$.
\\
\item[(b)] The spatial normal of $\Gamma_t(u)$  is continuous
in space and time, in $B_{s}(0)$.\\
\item[(c)] If $x\in\Gamma_0(u)\cap B_{s}(0)$ and $x+de_n\in\Gamma_t(u)\cap
B_{s}(0)$, then
$$
C^{-1} \dfrac{u(x-de_n,0)}{d} \leq |Du(x+de_n,t)|= V_{x+de_n,t}\leq
C \dfrac{u(x-de_n,0)}{d}
$$
where $C$ depends on $n$ and $M_0$. Hence
$$\dfrac{d}{t} \sim |Du(x+de_n,t)| \sim \dfrac{u(x-de_n,0)}{d}.$$
\end{itemize}
 \end{theorem}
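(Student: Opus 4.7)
The main outline follows the strategy already foreshadowed in the introduction: the bulk of the work is to show that on every sufficiently small parabolic scale near a free boundary point the solution is close in sup-norm to a Lipschitz profile in space-time, after which one invokes the flat-implies-$C^{1,\alpha}$ regularity theory of Athanasopoulos--Caffarelli--Salsa \cite{acs1,acs2} to conclude (a) and (b).

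The first concrete step is to build sub- and supersolutions out of shifts and radial dilations of $u_0$ itself. Because $\partial\Omega_0$ is Lipschitz in $B_2(0)$ with constant $L<L_n$, small inward/outward radial perturbations of $\Omega_0$, solved forward by (ST1), yield a sandwich that controls the displacement of $\Gamma_t$ in terms of $t$ and $L$. Combined with (I-c)--(I-d), which pin down the linear growth of $u_0$ transverse to $\Gamma_0$, these barriers give a first, coarse estimate: there exists $s_0 > 0$ such that $\Gamma_t \cap B_{s_0}(0)$ stays within a space-time cone of axis $e_n$ and aperture comparable to $L$, for $0\leq t\leq s_0^2$.

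The core step is then the rescaling argument. Fix a free boundary point $P_0 = (x_0,t_0) \in \Gamma(u) \cap (B_{s_0}\times [0,s_0^2])$ and consider the parabolic rescalings $u_r(x,t) = r^{-1} u(x_0+rx, t_0+r^2 t)$ for a dyadic sequence $r\to 0$. Using the growth bounds (\ref{growth}) with exponents forced by (I-b) to satisfy $5/6\leq \beta<\alpha\leq 7/6$, together with the non-degeneracy (I-d) and the bounded-Laplacian hypothesis (I-c), I would show by induction on scale that each $u_r$ is close in $L^\infty$ to a one-phase Lipschitz profile with Lipschitz constant at most $2L$, and that the error decays geometrically. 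The key ingredients at this step are interior Harnack for the caloric equation on the positive phase, the boundary Harnack principle for caloric functions in the Lipschitz domain cut out by the barrier from the previous paragraph, and a compactness-plus-contradiction argument that produces a harmonic limit profile at each scale. Once this almost-Lipschitz-in-space-time property is in hand, the ACS iteration upgrades $\Gamma_t$ to $C^1$ in $B_s(0)$ with Lipschitz constant $L'<L_n$ and continuous spatial normal, yielding (a) and (b).

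Part (c) would be obtained from the $C^1$ structure established above as follows. Comparison with the radial barriers of Step 1 gives $d/t\sim C$ on $\Gamma_t\cap B_s$, hence by the free boundary condition $V=|Du^+|$ we get $|Du^+(x+de_n,t)|\sim d/t$. The remaining equivalence $|Du^+(x+de_n,t)|\sim u(x-de_n,0)/d$ comes from the boundary Harnack principle applied to $u(\cdot,t)$ and the Dirichlet harmonic measure in the Lipschitz domain $\Omega_t\cap B_s$, with (I-d) supplying the matching lower bound and (I-c) the upper bound via the mean value property up to the boundary. The main obstacle is unquestionably the scale-by-scale almost-Lipschitz step: at small scales the contribution of the initial data to $u_r$ is of size $r^{\alpha-1}$ by (I-d), and only the restriction $\alpha\leq 7/6$ from (I-b) keeps this perturbation bounded as $r\to 0^+$; accommodating this nonflatness inside the compactness-contradiction scheme, while still extracting a Lipschitz limit profile, is the most delicate point and is precisely where the techniques of \cite{cjk1,cjk2,ck} are deployed.
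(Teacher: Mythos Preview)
This theorem is not proved in the present paper at all: it is quoted verbatim as Theorem~0.1 of \cite{ck} and used as a black box input for the two-phase analysis. Consequently there is no ``paper's own proof'' to compare your proposal against; the paper only restates the result and then, in Sections~3--5, mimics and extends the \emph{strategy} of \cite{ck} to the two-phase setting.

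That said, two remarks on your sketch, judged against how the paper describes the one-phase argument it is generalizing. First, your rescaling $u_r(x,t)=r^{-1}u(x_0+rx,t_0+r^2t)$ does not preserve the Stefan free boundary law $V=|Du|$: under this scaling the condition becomes $V_r=r|Du_r|$. The paper (and \cite{ck}) instead uses either the pure parabolic dilation $u(ax,a^2t)$, which does preserve $V=|Du|$, or the hybrid rescaling $\alpha^{-1}u(rx,\alpha^{-1}r^2t)$ with $\alpha=u(x_0-re_n,t_0)$, which turns the problem into one with a small coefficient in front of the time derivative. Second, the mechanism in \cite{ck}, as reflected in Sections~3--4 here, is not a compactness--contradiction scheme producing a harmonic limit; it is a direct barrier-and-Harnack argument (distance estimates from one-phase comparison, forward/backward Harnack at $t=0$, and approximation by harmonic functions in almost-Lipschitz domains) that shows the solution is $\epsilon$-monotone in a space-time cone, after which the \cite{acs1,acs2} iteration is run. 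Your high-level architecture (close to Lipschitz $\Rightarrow$ ACS $\Rightarrow$ $C^1$; then part (c) from boundary Harnack) matches, but the engine driving the ``close to Lipschitz'' step is constructive rather than compactness-based.
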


Theorem~\ref{thm:ckmain} states that  the free boundary regularizes
in space, in a scale proportional to the distance it has traveled.
Note that the regularity results hold up to the initial time
and all the regularity assumptions are imposed only on the initial
data.

\vspace{10pt}

 Our aim in this paper is to extend the above theorem to the
two-phase case. Here the intuition is rather straightforward, based
on the previous results. There are two cases:
\begin{itemize}
\item[(a)] One of the phases has much bigger flux than the other: in this case one-phase like phenomena (regularization by the dominant phase proportional to the distance the free oundary traveled) is expected.

\item[(b)] Both phases are in balance: in this case one expects regularization due to competition between two phases, resulting in Lipschitz-like behavior over  time.
\end{itemize}

 The difficulty in making above heuristics rigorous lies in introducing a proper ``sorting" procedure
which divides the cases (a) and (b) in a given scale.  To enable such procedure, it is essential to show Harnack-type
inequalities for solutions of (ST2) in both cases, ensuring that the behavior of solutions can be localized in a proper time-space scale.

\vspace{5pt}

To state the main result, we introduce one more notation.
\vspace{5pt}

 \noindent $\bullet$  For $x_0\in\Gamma_0=\Gamma_0(u)$  and $e_n :=
x_0/|x_0|$, define
$$
R^+(x_0,d):= \dfrac{u^+(x_0-de_n,0)}{u^-(x_0+de_n,0)}, \quad
R^-(x_0,d):=\dfrac{u^-(x_0+de_n,0)}{u^+(x_0-de_n,0)}
$$
and
 $$
 t(x_0,d) := \min[\frac{d^2}{u^+(x_0-de_n,0)}, \frac{d^2}{u^-(x_0+de_n,0)}].
 $$

\vspace{5pt}

\begin{theorem}[Main Theorem] \label{maintheorem}
Suppose $u$ is a solution of (ST2) with initial data $u_0$
satisfying (Ia)-(Id) with $\Omega_0(u) \subset B_2(0) $. Then there
exists a constant $d_0$ depending on the dimension $n$ and $N_0$
such that the following holds. If $x_0\in\Gamma_0(u)$ and $d \leq
d_0$, then
 $\Gamma(u)$ is a Lipschitz graph in space-time in the region $B_{2d}(x_0)\times [t(x_0,d)/2, t(x_0,d)]$
 with $\Gamma(u)$ intersecting with $B_d(x_0)\times [t(x_0,d)/2, t(x_0,d)]$.
 Further, there exists a positive dimensional constant $M$ such that the following
 holds.
\begin{itemize}
\item[(a)] If $R^+(x_0,d) \geq M$, then
$$
 |Du^+|(x,t) \sim \dfrac{u^+(x_0-de_n,0)}{d}
$$
in $B_{2d}(x_0)\times \displaystyle{[\frac{t(x_0,d)}{2},t(x_0,d)]}$
and
$$
V_{x,t} \sim \dfrac{u^+(x_0-de_n,0)}{d}
$$
on $\displaystyle{\Gamma(u) \cap (B_{2d}(x_0)\times
[\frac{t(x_0,d)}{2},t(x_0,d)])}$.

The parallel statements hold for $u^-$ if $R^-(x_0,d)\geq M$.

\item[(b)] If $R^+(d), R^-(d)\leq M$, then
$$
 |Du^{\pm}|(x,t)
\sim  \dfrac{u^+(x_0-de_n,0)}{d} \sim \dfrac{u^-(x_0+de_n,0)}{d}
$$
 in $ B_{2d}(x_0)\times [t(x_0,d)/2,t(x_0,d)]$.

\end{itemize}
\end{theorem}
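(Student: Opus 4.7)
The plan is to reduce the statement to a unit scale by parabolic rescaling, and then run a dichotomy governed by the ratios $R^{\pm}(x_0,d)$. Introduce rescaled coordinates $y = (x-x_0)/d$ and $s = t/t(x_0,d)$, and rescale amplitude so that on the positive side the initial datum has size $1$ at $y=-e_n$, while on the negative side it has size $R^{-}(x_0,d)$ at $y=e_n$. Condition (I-b) together with the harmonic growth bound \eqref{growth} makes $\tilde\Gamma_0$ a Lipschitz graph with small Lipschitz constant, and conditions (I-c)--(I-d) sandwich $\tilde u^{\pm}$ between $r^{\alpha}$ and $r^{\beta}$ at distance $r$ from $\tilde\Gamma_0$. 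In particular, $\tilde u^{+}$ is non-degenerate at unit scale and $\tilde u^{-}$ is non-degenerate at scale $R^{-}(x_0,d)$.

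The choice of $t(x_0,d)$ is arranged so that the free boundary travels approximately unit distance in rescaled time. In \textbf{Case (a)} (that is, $R^{+}(x_0,d)\geq M$), the estimate \eqref{growth} and a boundary Harnack principle in almost-Lipschitz domains give $|D\tilde u^{-}|\leq C M^{-1}\,|D\tilde u^{+}|$ uniformly near $\tilde\Gamma$. The Stefan relation $V = |D\tilde u^{+}|-|D\tilde u^{-}|$ is then a small perturbation of its one-phase counterpart $V = |D\tilde u^{+}|$. I would sandwich $\tilde u$ between appropriately modified one-phase sub- and supersolutions, for which Theorem~\ref{thm:ckmain} delivers a Lipschitz spacetime profile together with matching flux and velocity estimates, and then choose $M$ large in terms of the dimension to absorb the perturbation into the constants.

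The main difficulty is \textbf{Case (b)}, where both $R^{\pm}\leq M$ and the two phases are in genuine competition: each contributes to the velocity law at unit scale, so the one-phase strategy of \cite{cjk1}, \cite{cjk2}, \cite{ck} cannot be invoked directly. My plan is to implement the decomposition anticipated in the introduction: write $\tilde u = v+w$, where $v$ absorbs the positive-phase flux in a thin layer adjacent to $\tilde\Gamma$ and $w$ balances the negative-phase contribution, and establish monotonicity cones for $v$ and $w$ separately via independent caloric-barrier constructions in an almost-Lipschitz domain. Recombining the cones furnishes a monotonicity cone for $\tilde u$ on $B_{2}(0)\times[1/2,1]$, which is the required Lipschitz spacetime profile. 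The key technical ingredients here are a two-phase Harnack-type estimate localizing each phase at the current scale, together with a careful analysis of how a spatial perturbation of $v$ propagates through the free boundary into $w$ (and back through the velocity law).

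Once the rescaled free boundary is Lipschitz with dimensional constants, non-degeneracy from (I-d) delivers the lower flux bound, boundary Harnack in the almost-Lipschitz positive and negative phases provides the matching upper bound, and the Stefan condition converts these into the stated velocity estimates in each case. Undoing the rescaling yields the assertion in $B_{2d}(x_0)\times[t(x_0,d)/2,t(x_0,d)]$ and the intersection of $\Gamma(u)$ with $B_d(x_0)\times[t(x_0,d)/2,t(x_0,d)]$.
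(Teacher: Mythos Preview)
Your treatment of Case (a) is in the right spirit and matches the paper's Proposition~\ref{pri-reg-bad}: when one phase dominates by a large factor $M$, one sandwiches $u$ between barriers built from the one-phase Hele--Shaw/Stefan solution and absorbs the minor phase as a perturbation.

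The genuine gap is in Case (b). Your proposed decomposition $\tilde u = v + w$, with $v$ carrying the positive-phase flux and $w$ the negative one, is not well posed for this problem: the free boundary condition $V = |Du^+| - |Du^-|$ is nonlinear and ties both phases to a \emph{single} moving interface, so there is no meaningful way to assign separate free boundaries to $v$ and $w$, establish monotonicity cones for each, and then ``recombine'' them. Any attempt to do so will run into the fact that a spatial perturbation of $v$ moves the interface, which changes the domain on which $w$ is caloric, which feeds back into the velocity law; you acknowledge this coupling but give no mechanism to close it. In short, an additive splitting of the solution does not linearize the interaction between the phases.

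What the paper actually does is a \emph{domain} decomposition, not a solution decomposition. Inside $B_{2r}(x_0)\times[0,t(x_0,r)]$ one covers the pieces of $\Gamma_0$ where one phase already has large slope by ``bad'' parabolic cylinders $Q_x$ (handled by the one-phase-like argument of Proposition~\ref{pri-reg-bad}/Lemma~\ref{reg-bad}), and lets $\Sigma$ be the complement. The heart of Case (b) is Proposition~\ref{main1}: on $\Gamma\cap\Sigma$ the rescaled fluxes of \emph{both} phases stay uniformly bounded by $KMC_0$. Its proof is a contradiction argument that, crucially, \emph{re-uses} the one-phase regularization: if the positive flux first reaches $KMC_0$ at some $(z_0,T_0)$, one locates a slightly larger scale $h$ at which the ratio is again $\geq M$, applies Lemma~\ref{reg-bad} in $B_h$, and obtains $|\nabla u^+|\sim M^2C_0$ there, contradicting $K\gg M$. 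This step relies on the Harnack/backward-Harnack inequalities (Lemmas~\ref{laterharnack}, \ref{backward}), the harmonic approximation (Lemma~\ref{regularity}), and the monotonicity formula (Corollary~\ref{moncor}) to rule out both phases being large simultaneously. Once Proposition~\ref{main1} and Corollary~\ref{cor} give $\epsilon$-monotonicity of $\Gamma(u)$ in a space cone (from almost-star-shapedness, Lemma~\ref{star-shaped}) and a time cone, the ACS-type iteration in Section~5 upgrades this to full Lipschitz regularity and the stated gradient/velocity bounds. Your proposal is missing this entire mechanism; replacing the additive splitting by the domain decomposition and the contradiction argument of Proposition~\ref{main1} is the essential correction.
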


\vspace{10pt}

\begin{remark}
\begin{itemize}
\item[1.] Note that in the first case, $t(x_0,d)$ indeed is comparable to the time that $\Gamma(u)$ has
traveled from $x_0$ to $x_0 \pm de_n$, and thus we can say that the free boundary regularizes in a scale
 proportional to the distance it has traveled.
\item[2.] Our result extends to the case where the star-shaped condition (I-a) is replaced by
\begin{itemize}
\item[(I-a)'] $\Omega_0$  is locally Lipschitz with a sufficiently small Lipschitz constant.
\end{itemize}
We discuss the difference in the proof in this case, in section 6.
\end{itemize}
\end{remark}

\vspace{5pt}

Let us finish this section with an outline of the paper.
 In section 2 we introduce preliminary results and notations, to be used in the paper.
In section 3 we prove some properties on the evolution of solutions
of (ST2) with star-shaped data.  In addition to Harnack
inequalities, we show that the solution stays near the star-shaped
profile for a unit time (Lemma~\ref{star-shaped}), which in turn
yields that the solution stays very close to harmonic functions
(Lemma~\ref{regularity}). Making use of the results in section 3, in
section 4 we perform a decomposition procedure to show that for a
unit time all parts of the free boundary stay close to Lipschitz
profiles, regardless of the local dynamics between the phases. This
completes our main step in the analysis. In section 5 we describe
the procedure leading to further regularization, pointing out the
main difference between the previous results.  In section 6 we discuss a generalized proof
for the  corresponding regularization result (Theorem\ref{maintheorem2}) when the star-shapedness of the initial data (I-a) is replaced by a local version (I-a)'.

\section{Preliminary lemmas and notations}

We introduce some notations.

\vspace{5 pt}

\noindent $\bullet$ For $x \in \R^n$, denote $x=(x',x_n) \in
\R^{n-1}\times \R$ where $x_n = x\cdot e_n$.

\vspace{5 pt}

\noindent $\bullet$ Let $B_r(x)$ be the space ball of radius $r$,
centered at $x$.

\vspace{5 pt}

\noindent $\bullet$ Let $Q_r:= B_r(0)\times [-r^2,r^2]$ be the
parabolic cube and let $K_r := B_r(0)\times [-r,r]$ be the
hyperbolic cube.

\vspace{5 pt}

\noindent $\bullet$ A caloric function in $\Omega\cap Q_r$ will
denote a nonnegative solution of the heat equation, vanishing along
the lateral boundary of $\Omega$.

\vspace{5 pt}

\noindent $\bullet$ For $x_0 \in \Gamma_0$ and $e_n=x_0/|x_0|$,
define
$$
t(x_0,d) := \min[\dfrac{d^2}{u^+(x_0-de_n,0)},
\dfrac{d^2}{u^-(x_0+de_n,0)}].
$$

\vspace{5 pt}

\noindent $\bullet$ Given $\e>0$,  a function $w$ is called
$\e$-monotone in the direction $\tau$ if
 $$
 u(p+\lambda\tau) \geq u(p) \hbox{ for any } \lambda \geq \e.
 $$

\noindent $\bullet$ $W_x(\theta^x,e)$ and $W_t(\theta^t,\nu)$ with
$e\in\R^{n}$ and $\nu\in span(e_n,e_t)$ respectively denote a
spatial circular cone of aperture $2\theta^x$ and axis in the
direction of $e$, and a two-dimensional space-time cone in
$(e_n,e_t)$ plane of aperture $2\theta^t$ and axis in the direction
of $\nu$.

\vspace{5 pt}

\noindent $\bullet$ $w$ is $\e$-monotone in a cone of directions if
$w$ is $\e$-monotone in every direction in the cone.

\vspace{5 pt}

\noindent $\bullet$ $C$ is called an universal constant if it
depends only on the dimension $n$ and the regularity constant $N_0$
of $u_0$.

\vspace{5 pt}

 The first lemma is a direct consequence of the
interior Harnack inequalities proved in [C-C].

\begin{lemma} [\cite{c-c}] \label{lem:C-C}
Suppose $w(x):\R^n\to \R$ has bounded Laplacian. Then $w$ is
H\"{o}lder continuous with its constant depending on the Laplacian
bound.
\end{lemma}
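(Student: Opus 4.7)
The plan is to obtain the H\"older estimate by the classical oscillation-decay iteration, which converts the interior Harnack inequality of \cite{c-c} into a H\"older modulus of continuity. Fix $x_0 \in \R^n$, set $M := \sup|\Delta w|$ and $\omega(r) := \mathrm{osc}_{B_r(x_0)} w$. The key intermediate goal is a one-step decay
\begin{equation*}
\omega(r/2) \leq \theta\, \omega(r) + C M r^2
\end{equation*}
for some dimensional $\theta \in (0,1)$; iterating over the dyadic scales $r_k = 2^{-k}$ then yields $\omega(r) \leq C(\omega(1) + M) r^{\alpha}$ with $\alpha = \min(\log_2 \theta^{-1},\, 2)$, which is the H\"older bound at $x_0$.

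To establish the one-step decay I would apply the Harnack inequality of \cite{c-c}, in its form with an $L^\infty$ right-hand side, to the two nonnegative auxiliary functions $u_1 := w - \inf_{B_r} w$ and $u_2 := \sup_{B_r} w - w$ on $B_r(x_0)$. Each satisfies $|\Delta u_i| \leq M$, and Harnack on the concentric half-ball gives
\begin{equation*}
\sup_{B_{r/2}} u_i \;\leq\; C \bigl( \inf_{B_{r/2}} u_i + M r^2 \bigr), \qquad i=1,2.
\end{equation*}
Using the identities $\sup_{B_{r/2}} u_1 + \sup_{B_{r/2}} u_2 = \omega(r) + \omega(r/2)$ and $\inf_{B_{r/2}} u_1 + \inf_{B_{r/2}} u_2 = \omega(r) - \omega(r/2)$, summing the two inequalities and rearranging produces
\begin{equation*}
\omega(r/2) \;\leq\; \tfrac{C-1}{C+1}\,\omega(r) + \tfrac{2C}{C+1}\, M r^2,
\end{equation*}
which is the desired decay with $\theta = (C-1)/(C+1) < 1$.

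The dyadic iteration is then routine: the recurrence $a_{k+1} \leq \theta a_k + C M\cdot 4^{-k}$ applied to $a_k := \omega(2^{-k})$ gives $a_k \leq C(a_0 + M)\, 2^{-\alpha k}$ by a direct induction, and translating back to continuous $r$ produces a local H\"older estimate at $x_0$ with constant depending only on the dimension $n$, on $M$, and on a local $L^\infty$ bound for $w$. Since $x_0$ was arbitrary, this gives the lemma.

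The only deep ingredient in this scheme is the scale-invariant Harnack inequality of \cite{c-c} for functions with bounded Laplacian; once that is in hand, there is no genuine obstacle. The only place requiring real care is the bookkeeping of the two Harnack applications so that the error term $CMr^2$ does not swell during the iteration, which is precisely why one must take $\alpha \leq 2$ in the final estimate.
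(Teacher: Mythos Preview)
Your proposal is correct and matches exactly what the paper indicates: the lemma is stated without proof as ``a direct consequence of the interior Harnack inequalities proved in \cite{c-c},'' and your oscillation-decay argument is precisely the standard way to extract H\"older continuity from that Harnack inequality. There is nothing further to compare.
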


\begin{lemma}[\cite{fgs1},  Theorem 3]  \label{lem:fgs84}
 Let $\Omega$ be a domain in $\R^n\times \R$ such that
 $(0,0)$ is on its lateral boundary. Suppose $\Omega$ is a $Lip^{1,1/2}$ domain, i.e.,
$$
\Omega= \{(x',x_n,t) : |x'| <1,  |x_n| < 2L, |t|<1,   x_n \leq
f(x',t)\},
$$
where $f$ satisfies $|f(x',t)-f(y',s)| \leq L(|x'-y'|+|t-s|^{1/2}.)$
If $u$ is a caloric function in $\Omega$, then there exists
$C=C(n,L)$, where $L$ is the Lipschitz constant for $\Omega$, such
that
$$
\dfrac{u(x,t)}{v(x,t)}\leq C\dfrac{u(-Le_n, 1/2)}{v(-Le_n,-1/2)}.
$$
for $(x,t)\in Q_{1/2}$.
\end{lemma}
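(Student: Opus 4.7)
The plan is to deduce the boundary Harnack inequality from two separate ingredients: a Carleson-type upper bound on the numerator $u$ and a matching lower bound on the denominator $v$, then combine them using interior parabolic Harnack chains. The $\mathrm{Lip}^{1,1/2}$ regularity of $\partial \Omega$ is used throughout to guarantee that truncated parabolic cylinders can be inscribed in $\Omega$ with dimensions controlled only by $L$, so that the various chains and barriers that enter the argument can be constructed with universal constants.

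First I would establish the Carleson estimate: for a caloric function $w$ in $\Omega$ vanishing on the lateral boundary,
\[
w(x,t) \leq C\, w(-Le_n,\, 1/2) \qquad \text{for all } (x,t) \in Q_{1/2}.
\]
Following the scheme of Fabes--Garofalo--Salsa (and its elliptic antecedent in Jerison--Kenig), one covers the part of $\partial \Omega$ near $(x,t)$ by a family of parabolic surface balls, bounds $w$ on each corresponding boundary region by the caloric measure of that set times the local sup of $w$, and uses the $\mathrm{Lip}^{1,1/2}$ structure to place interior non-tangential reference points at uniformly controlled distance. A chain of interior Harnack inequalities, run forward in time, links these interior points to $(-Le_n,1/2)$, and the resulting doubling property of caloric measure on forward-in-time balls closes the upper bound.

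Next I would produce the lower bound $v(x,t) \geq c\, v(-Le_n,-1/2)$ on an interior subset of $\Omega$ from which a Harnack chain reaches $Q_{1/2}$. This is constructive: one builds a nonnegative caloric subsolution $\phi$ supported in $\Omega$, vanishing on $\partial \Omega$, normalized so that $\phi(-Le_n,-1/2) \sim v(-Le_n,-1/2)$, and satisfying $\phi \geq c$ on the prescribed set; the $\mathrm{Lip}^{1,1/2}$ regularity lets one inscribe an appropriate half-space heat kernel translate as such a barrier. The parabolic maximum principle gives $v \geq \phi$ on the relevant region. Dividing by the Carleson bound on $u$ yields the claimed ratio estimate.

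The main obstacle is the well-known failure of backward Harnack for caloric functions at purely interior points: one cannot estimate $v(x,t)$ from below in terms of later values of $v$ using interior Harnack alone. The key observation is that the lateral vanishing of $v$ together with the H\"older-$1/2$ in time regularity of $\partial \Omega$ permits a \emph{boundary-driven} backward control: the maximum of a caloric function with lateral zero boundary data, taken over a parabolic slab adjacent to $\partial \Omega$, is attained at a forward time slice with modulus depending only on $L$. Supplying this geometric input is precisely what the $\mathrm{Lip}^{1,1/2}$ hypothesis is designed for, and implementing it quantitatively is the technical heart of the proof.
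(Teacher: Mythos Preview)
The paper does not prove this lemma at all: it is stated as a preliminary result and attributed to \cite{fgs1}, Theorem~3, with no argument given. So there is no ``paper's own proof'' to compare against; your sketch is effectively a summary of the strategy in the cited reference of Fabes--Garofalo--Salsa, and at that level of description it is accurate (Carleson-type upper bound on the numerator, a backward-in-time lower bound on the denominator exploiting the lateral vanishing and the $\mathrm{Lip}^{1,1/2}$ geometry, then interior Harnack chains to combine them).
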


\begin{lemma} [\cite{acs1},  Theorem 1] \label{lem:acs96}
Let $\Omega$ be a Lipschitz domain in $\R^n\times \R$, i.e.,
$$
Q_1 \cap \Omega = Q_1 \cap\{(x,t): x_n \leq f(x',t)\},
$$
where $f$ satisfies $|f(x,t)-f(y,s)| \leq L (|x-y|+|t-s|)$. Let $u$
be a caloric function in $Q_1\cap \Omega$ with
$(0,0)\in\partial\Omega$ and $u(-e_n,0) = m>0$ and $\sup_{Q_1} u
=M$. Then there exists a constant $C$, depending only on $n$, $L$,
$\frac{m}{M}$ such that
$$
u(x,t+\rho^2) \leq Cu(x,t-\rho^2)
$$
for all $(x,t)\in Q_{1/2}\cap \Omega$ and for $0\leq \rho \leq
d_{x,t}$.
\end{lemma}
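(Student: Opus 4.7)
The plan is to reduce the backward Harnack estimate to a combination of Moser's interior parabolic Harnack inequality and the boundary comparison Lemma~\ref{lem:fgs84}. Let $d_{x,t}$ denote the parabolic distance from $(x,t)$ to the lateral boundary of $\Omega$, and split into cases based on the size of $\rho$ relative to $d_{x,t}$.

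In the interior regime $\rho \leq d_{x,t}/4$, the space-time box of parabolic radius $\sim\rho$ centered at $(x,t)$ lies inside $\Omega$, and a chain of a bounded number of parabolic Harnack balls connects $(x,t-\rho^2)$ to $(x,t+\rho^2)$. The classical interior parabolic Harnack inequality then yields $u(x,t+\rho^2) \leq C u(x,t-\rho^2)$ at once with universal $C$.

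In the boundary regime $d_{x,t}/4 \leq \rho \leq d_{x,t}$, I rescale so that the relevant piece of lateral boundary has unit size. Since any Lipschitz graph in space-time is also $Lip^{1,1/2}$, Lemma~\ref{lem:fgs84} applies on the rescaled domain. The key auxiliary object is a reference caloric function $v$ in the cylinder, vanishing on the same lateral boundary, for which the backward Harnack is known a priori---for instance, $v$ can be taken as a Green's function with pole at a fixed forward-in-time interior point of the cylinder, or as the caloric measure of an appropriate subset, whose backward Harnack reduces to the doubling property of caloric measure on Lipschitz cylinders. Applying Lemma~\ref{lem:fgs84} at times $t\pm\rho^2$ produces
$$
\frac{u(x,t+\rho^2)}{v(x,t+\rho^2)} \leq C \frac{u(-Le_n,1/2)}{v(-Le_n,-1/2)}, \qquad \frac{v(x,t-\rho^2)}{u(x,t-\rho^2)} \leq C \frac{v(-Le_n,1/2)}{u(-Le_n,-1/2)}.
$$
Combining these two inequalities with the backward Harnack for $v$ at $(x,t)$, and chaining the interior reference points for both $u$ and $v$ by ordinary interior Harnack, delivers the bound for $u$. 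The normalization $u(-e_n,0)=m$ and $\sup_{Q_1}u=M$ enters only through the constant $C$, via control on the ratio $u(-Le_n,\cdot)/v(-Le_n,\cdot)$.

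The main obstacle is establishing the backward Harnack for the auxiliary function $v$, which a priori sits at the same formal level as the statement being proved. The standard resolution is to choose $v$ to be an explicit parabolic kernel (Green's function or caloric measure) and to invoke the doubling property of caloric measure on Lipschitz cylinders---a classical result from the Kemper / Fabes--Garofalo--Salsa circle of ideas---for which a direct proof using barriers and the maximum principle is available without circularity. Once $v$ is under control, the comparison Lemma~\ref{lem:fgs84} transports the bound to arbitrary caloric $u$ with the claimed dependence of $C$.
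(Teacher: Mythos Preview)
The paper does not give its own proof of this lemma; it is quoted verbatim as Theorem~1 of \cite{acs1} and used as a black box. So there is no in-paper argument to compare against.

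That said, your sketch is essentially the standard route taken in \cite{acs1} (building on \cite{fgs1}, \cite{fgs2}): interior parabolic Harnack handles the case $\rho\ll d_{x,t}$, and for the boundary regime one bootstraps from the boundary comparison principle (your Lemma~\ref{lem:fgs84}) together with a reference function whose backward Harnack is known independently, namely the Green's function or caloric measure, for which the doubling property on $Lip^{1,1/2}$ cylinders is established directly. Your observation that a Lipschitz-in-$(x,t)$ graph is automatically $Lip^{1,1/2}$ on bounded time intervals is the right bridge between the hypothesis here and the setting of Lemma~\ref{lem:fgs84}. The dependence of $C$ on $m/M$ enters exactly where you say: when chaining the interior reference values $u(-Le_n,\pm 1/2)$ one needs $u$ bounded above by $M$ and below (at the interior point) by a constant times $m$, the latter coming from interior Harnack applied between $(-e_n,0)$ and $(-Le_n,-1/2)$.

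One point worth tightening: in the boundary regime you should make explicit that after rescaling by $\rho\sim d_{x,t}$ the relevant portion of the lateral boundary is still a Lipschitz graph with the \emph{same} constant $L$ (the free boundary motion law is scale-invariant here only because the graph condition is), so that Lemma~\ref{lem:fgs84} applies uniformly at every scale. Otherwise the outline is sound and matches the original argument.
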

\begin{lemma}  [\cite{acs1}, Lemma 5] \label{lem:almostharmonic}
Let $u$ and $\Omega$ be as in  Lemma~\ref{lem:acs96}. Then there
exist $a, \delta>0$ depending only on $n$, $L$, $\frac{m}{M}$ such
that
$$
w_+ := u+u^{1+a}\hbox{ and } w_- := u-u^{1+a}
$$
are subharmonic and superharmonic, respectively, in $
Q_{\delta}\cap\Omega\cap\{t=0\}.$
\end{lemma}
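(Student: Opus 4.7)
The plan is to deduce the lemma from a direct pointwise computation of $\Delta w_\pm$, supplemented by two quantitative estimates on $u$ that follow from the Lipschitz structure of $\Omega$ together with Lemmas \ref{lem:fgs84} and \ref{lem:acs96}. The chain rule and the heat equation $\Delta u = u_t$ give
$$
\Delta w_\pm \;=\; [\,1 \pm (1+a)u^a\,]\,u_t \;\pm\; a(1+a)\,u^{a-1}\,|\nabla u|^2,
$$
so the gradient-squared term has a definite sign in our favor; it remains to show that it dominates the indefinite $u_t$ term on $Q_\delta\cap\Omega\cap\{t=0\}$ for $a,\delta$ chosen small enough.

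The first ingredient I would establish is a Harnack-type estimate of the form $|u_t(x,0)|\le C\,u(x,0)$ on $Q_{1/2}\cap\Omega$ with $C=C(n,L,m/M)$. The key observation is that $v:=u_t$ is itself caloric in $\Omega$, and differentiating the identity $u\equiv 0$ along the lateral boundary yields $u_t = -\nabla u\cdot V$, where $V$ is the spatial velocity of the moving boundary and is bounded by $L$ by the $Lip^{1,1/2}$ regularity; hence $|u_t|\le L|\nabla u|$ on $\partial\Omega$, which Lemma \ref{lem:fgs84} compares to $u$ just inside. Decomposing $u_t=(u_t)^+-(u_t)^-$ and applying Lemma \ref{lem:fgs84} to each part, with the parabolic time-shift absorbed via Lemma \ref{lem:acs96}, delivers the claimed bound. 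The second ingredient is a non-degenerate gradient lower bound $|\nabla u(x,0)|\ge c\,u(x,0)/d(x)$, where $d(x):=\mathrm{dist}(x,\partial\Omega\cap\{t=0\})$, together with the complementary linear lower bound $u(x,0)\ge c_1\,d(x)$. Both follow from comparison of $u$ with an explicit linear barrier in the nearly-flat spatial cross-section, whose constants remain quantitative precisely because $L$ is small.

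Feeding these bounds into the formula for $\Delta w_+$ gives
$$
\Delta w_+ \;\ge\; -2C\,u \;+\; c^2\,a(1+a)\,\frac{u^{a+1}}{d(x)^{2}},
$$
so subharmonicity reduces to $u^a/d(x)^2 \ge 2C/(c^2 a(1+a))$. Using $u\ge c_1 d(x)$ this becomes $d(x)^{\,a-2}\ge\mathrm{const}$, which is satisfied throughout $Q_\delta$ once $\delta$ is chosen small enough, depending on $a$ and the constants already obtained. I would fix $a$ small (any $0<a<2$ works, e.g.\ $a=1/2$) and then shrink $\delta$ accordingly. The proof that $w_-$ is superharmonic proceeds in parallel: $[\,1-(1+a)u^a\,]\in[1/2,1]$ for $u$ sufficiently small, so the same estimates force $\Delta w_-\le 0$.

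The main obstacle will be the estimate $|u_t|\le C\,u$. Since $u_t$ changes sign, Lemma \ref{lem:fgs84} cannot be invoked directly; one has to split into $(u_t)^\pm$, show each is dominated by $u$ on the lateral boundary via the $Lip^{1,1/2}$ geometry of $\partial\Omega$, and use Lemma \ref{lem:acs96} to reconcile the parabolic translation the Harnack comparison requires. Once this bound is in place, the balancing of exponents in the final step is routine, and the smallness of $L$ is what keeps the constants in the gradient non-degeneracy and the linear lower bound $u\ge c_1 d$ away from zero.
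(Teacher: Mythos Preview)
The paper does not prove this lemma; it is quoted from \cite{acs1} as a preliminary result. Your overall architecture---compute $\Delta w_\pm$ via the chain rule and show that the favorable term $a(1+a)u^{a-1}|\nabla u|^2$ dominates the indefinite $u_t$ contribution---is indeed how \cite{acs1} proceeds. However, two of your quantitative inputs are false as stated, and your final balance closes only because the two errors happen to compensate.

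The bound $|u_t|\le Cu$ fails near the lateral boundary. A caloric traveling wave vanishing on $\{x_n=ct\}$ is $u=c^{-1}\bigl(e^{-c(x_n-ct)}-1\bigr)$ in $\{x_n<ct\}$; one checks $u_t/u\sim c/d$ as $d=\mathrm{dist}(x,\Gamma_t)\to 0$, so no uniform $C$ exists. (Incidentally, the domain in Lemma~\ref{lem:acs96} is $Lip^{1,1}$, i.e.\ Lipschitz in the full $(x,t)$ metric, not $Lip^{1,1/2}$ as you write; in a genuinely $Lip^{1,1/2}$ domain the boundary velocity is unbounded and your identity $u_t=-\nabla u\cdot V$ would be vacuous.) The estimate that \cite{acs1} actually establishes and uses is $|u_t|\le C\,|\nabla u|$, obtained from monotonicity of $u$ in a space-time cone of directions around $-e_n$: since $\partial\Omega$ is a $Lip^{1,1}$ graph, directions $(-e_n,\tau)$ with $|\tau|<1/L$ point into $\Omega$, and the resulting ordering of translates yields $|u_t|\le L\,u_{-e_n}$. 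Your second ingredient, the linear lower bound $u\ge c_1 d$, also fails for general Lipschitz graphs (a wedge of opening slightly less than $\pi$ gives growth $d^\alpha$ with $\alpha>1$); the correct bound is $u\ge c_1 d^{\alpha}$ with $\alpha=\alpha(n,L)$ close to $1$. With the corrected inputs the inequality $\Delta w_+\ge 0$ reduces to $u^{a-1}|\nabla u|\ge C/a$, hence (via $|\nabla u|\ge cu/d$ and $u\ge c_1 d^\alpha$) to $d^{a\alpha-1}\ge C'$, which holds on $Q_\delta$ once $a<1/\alpha$ and $\delta$ is chosen accordingly. So your scheme is salvageable, but only after both estimates are replaced; the mechanism you describe (a Harnack bound on $u_t/u$) is not the one that carries the proof.
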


Next we state several properties of harmonic functions:

\begin{lemma} [\cite{d}] \label{lem:D}
Let $u_1,u_2$ be two nonnegative harmonic functions in a domain $D$
of $\R^n$ of the form
$$
D=\{(x',x_n)\in\R^{n-1}\times\R: |x'|<2, |x_n|<2L, x_n>f(x')\}
$$
with $f$ a Lipschitz function with constant less than $L$ and
$f(0)=0.$ Assume further that $u_1=u_2=0$ along the graph of $f$.
Then in
$$
D_{1/2}=\{|x'|<1, |x_n|<L, x_n>f(x')\}
$$
we have
$$
0<C_1\leq\frac{u_1(x',x_n)}{u_2(x',x_n)}\cdot\frac{u_2(0,L)}{u_1(0,L)}\leq
C_2
$$
with $C_1,C_2$ depending only on $L$.
 \end{lemma}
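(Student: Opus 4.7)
The plan is to prove this classical boundary Harnack principle by first establishing sharp boundary decay estimates on each $u_j$ and then combining them through an oscillation argument. Normalize so that $u_1(0,L) = u_2(0,L) = 1$; the goal is then to show $c_1 \leq u_1/u_2 \leq c_2$ uniformly on $D_{1/2}$, with constants depending only on $L$.

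The first step is a Carleson-type upper bound: for any $P\in D_{1/2}$ one has $u_j(P) \leq C$ with $C = C(L)$. The strategy is to use the maximum principle together with a Harnack chain connecting $P$ to the interior reference point $(0,L)$. The Lipschitz structure ensures that such a chain, consisting of balls of radius comparable to $\mathrm{dist}(\cdot,\partial D)$, has cardinality bounded purely in terms of $L$, so iterating the interior Harnack inequality controls $u_j(P)$ in terms of $u_j(0,L)=1$. A matching lower bound $u_j(P) \geq c\cdot \mathrm{dist}(P,\partial D)^{\alpha}$ for some $\alpha = \alpha(L)$ is obtained by placing an interior barrier (for instance, the harmonic measure of the base of a truncated cone that fits inside $D$ thanks to the Lipschitz graph) below $u_j$ via the maximum principle.

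Combining these two pointwise bounds naively yields $u_1(P)/u_2(P) \leq C/(c\,\mathrm{dist}(P,\partial D)^\alpha)$, which degenerates at the boundary. The crucial step is therefore an oscillation/iteration argument: on each dyadic scale $r$ from the boundary, one compares $u_1$ to $\lambda u_2$ where $\lambda$ is chosen to make the two functions agree at an interior corkscrew point. Using the maximum principle on $u_1 - \lambda u_2$ together with the Carleson estimate at the next dyadic level, one shows that the oscillation of $u_1/u_2$ decreases by a fixed geometric factor $\gamma<1$ when passing from scale $r$ to scale $r/2$. Summing the geometric series yields a uniform two-sided bound for $u_1/u_2$ on $D_{1/2}$.

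The main obstacle is the oscillation step: one has to verify that the geometric improvement factor $\gamma$ truly stays strictly less than $1$ independently of the scale, which in turn hinges on having a \emph{quantitative} Carleson estimate that behaves uniformly under the natural rescaling that keeps the Lipschitz constant fixed. Without this uniformity the errors accumulate across scales and the ratio bound fails near the boundary. Once this scale-invariant improvement is in place, the rest of the argument is a clean iteration and the boundary Harnack principle follows.
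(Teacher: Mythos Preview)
The paper does not supply a proof of this lemma; it is quoted as a known preliminary result from Dahlberg~\cite{d}. So there is no ``paper's own proof'' to compare against, and your sketch should be judged on its own merits.

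Your overall architecture is the standard one, but there is a genuine gap in the first step. You claim the Carleson estimate $u_j(P)\le C$ follows from a Harnack chain connecting $P$ to $(0,L)$ whose length is bounded purely in terms of $L$. This is false: if $P$ is at distance $\delta$ from the graph, a Harnack chain from $P$ to $(0,L)$ necessarily has length of order $\log(1/\delta)$, and the resulting constant blows up as $P\to\partial D$. The Carleson estimate genuinely requires the vanishing boundary condition $u_j=0$ on the graph; the usual argument (as in Caffarelli--Fabes--Mortola--Salsa or Jerison--Kenig) combines boundary H\"older decay with an iteration/doubling argument by contradiction: if $\sup u_j$ were large compared to $u_j(0,L)$, the decay estimate would force the maximum to occur far from the boundary, and interior Harnack would then propagate largeness to the reference point. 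A bare Harnack chain cannot substitute for this.

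A second, minor point: the oscillation decay you describe in the final step actually yields H\"older continuity of the ratio $u_1/u_2$ up to the boundary, which is the content of the \emph{next} lemma in the paper (Lemma~\ref{lem:JK}, attributed to Jerison--Kenig). For the present statement you only need the ratio to be bounded, and for that a single comparison with harmonic measure (once the Carleson estimate is in hand) suffices; no iteration across scales is required.
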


\begin{lemma} [\cite{jk}] \label{lem:JK}
Let $D$, $u_1$ and $u_2$ be as in Lemma~\ref{lem:D}. Assume further
that
$$
\dfrac{u_1(0,L/2)}{u_2(0,L/2)} = 1.
$$
Then, $u_1(x',x_n)/u_2(x',x_n)$ is H\"{o}lder continuous in
$\bar{D}_{1/2}$ for some coefficient $\alpha$, both $\alpha$ and the
$C^\alpha$ norm of $u_1/u_2$ depending only on $L$.
\end{lemma}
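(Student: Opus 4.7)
The strategy is the classical dyadic oscillation argument built on top of the boundary Harnack principle furnished by Lemma~\ref{lem:D}. First I would apply Lemma~\ref{lem:D} at the reference point $(0,L)$: the normalization $u_1(0,L/2)/u_2(0,L/2)=1$ forces $u_1(0,L)/u_2(0,L)$ to be a universal constant, and hence, by a second application of Lemma~\ref{lem:D}, the ratio $u_1/u_2$ is bounded above and below by universal constants throughout $D_{1/2}$. Interior H\"older regularity of $u_1/u_2$ on compact subsets of $D_{1/2}$ then follows from standard elliptic estimates, since $u_2$ is harmonic and bounded away from zero there. What remains is H\"older regularity up to the Lipschitz graph.

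For each boundary point $x_0\in\partial D\cap\{|x'|\le 1/2\}$ and each sufficiently small $r>0$, set
\[
M_r:=\sup_{D\cap B_r(x_0)}\frac{u_1}{u_2},\qquad m_r:=\inf_{D\cap B_r(x_0)}\frac{u_1}{u_2},
\]
and let $A_r$ be a corkscrew point at distance $\sim r$ from $x_0$ with $\mathrm{dist}(A_r,\partial D)\gtrsim r$, which exists by the Lipschitz property of $\partial D$. The key estimate I would prove is the oscillation decay
\[
M_r-m_r\le\theta(M_{2r}-m_{2r})
\]
for some universal $\theta\in(0,1)$. Indeed, after possibly swapping the roles of $u_1$ and $u_2$, one may assume $(u_1/u_2)(A_{2r})\ge (M_{2r}+m_{2r})/2$. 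Then $v:=u_1-m_{2r}u_2$ is nonnegative and harmonic in $D\cap B_{2r}(x_0)$, vanishes on $\partial D\cap B_{2r}(x_0)$, and satisfies $v(A_{2r})\ge\tfrac12(M_{2r}-m_{2r})u_2(A_{2r})$. Applying Lemma~\ref{lem:D} to the pair $(v,u_2)$ on the rescaled Lipschitz domain $D\cap B_{2r}(x_0)$ yields
\[
\frac{v(x)}{u_2(x)}\ge c\,\frac{v(A_{2r})}{u_2(A_{2r})}\ge \tfrac{c}{2}(M_{2r}-m_{2r}) \quad\hbox{on } D\cap B_r(x_0),
\]
hence $m_r\ge m_{2r}+\tfrac{c}{2}(M_{2r}-m_{2r})$, which gives the claim with $\theta=1-c/2$.

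Iterating this inequality on the dyadic scales $r=2^{-k}$ produces a geometric decay $M_{2^{-k}}-m_{2^{-k}}\le C\theta^k$, which translates into H\"older continuity of $u_1/u_2$ at $x_0$ with exponent $\alpha=\log(1/\theta)/\log 2>0$ and constant depending only on $L$; the estimate is uniform in $x_0$. Combining this boundary H\"older estimate with the interior one via a standard chaining argument between a given pair of points and their nearest boundary projections yields H\"older continuity of $u_1/u_2$ on $\bar D_{1/2}$.

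The main technical point is the scale invariance of the constants in Lemma~\ref{lem:D}: dilating $D\cap B_{2r}(x_0)$ to unit size preserves the Lipschitz constant of the defining graph, and the corkscrew point $A_{2r}$ remains at a definite distance from the rescaled boundary, so the boundary Harnack constant $c$ can be taken independent of $r$ and of the boundary point $x_0$. Once this uniformity is verified, the dyadic iteration proceeds cleanly and no compactness or blow-up arguments are required.
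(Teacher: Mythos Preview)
The paper does not prove this lemma; it is quoted from \cite{jk} as a preliminary result and stated without argument. Your proposal supplies essentially the classical proof of the Jerison--Kenig boundary H\"older estimate: boundary Harnack (Lemma~\ref{lem:D}) gives two-sided bounds on $u_1/u_2$, and a dyadic oscillation decay at each boundary point upgrades this to H\"older continuity. The argument is correct and is the standard one.

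One small imprecision: the phrase ``after possibly swapping the roles of $u_1$ and $u_2$'' does not handle the case $(u_1/u_2)(A_{2r})<(M_{2r}+m_{2r})/2$ cleanly, since the midpoint of $u_2/u_1$ is not the reciprocal of the midpoint of $u_1/u_2$. The clean dichotomy is instead to apply Lemma~\ref{lem:D} to the pair $(M_{2r}u_2-u_1,\,u_2)$ in that case, which yields $M_r\le M_{2r}-\tfrac{c}{2}(M_{2r}-m_{2r})$ and the same oscillation decay. With that adjustment the iteration goes through exactly as you describe.
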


\begin{lemma}  [\cite{c2}] \label{lem:2.11}
 Let $u$ be as in
Lemma~\ref{lem:D}. Then there exists $c>0$ depending only on $L$
such that for $0<d<c$,
 $\frac{\partial}{\partial x_n}u(0,d)\geq 0$ and

$$ C_1\frac{u(0,d)}{d} \leq \frac{\partial u}{\partial x_n}(0,d)\leq
C_2\frac{u(0,d)}{d}$$ where $C_i=C_i(M)$.
\end{lemma}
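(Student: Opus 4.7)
The plan is to reduce both derivative bounds to interior harmonic-function estimates by using the Lipschitz structure of $\partial D$ to localize at the scale $d$, together with a translation/comparison argument to establish monotonicity in the $e_n$ direction. For the monotonicity $\partial_{x_n}u(0,d)\geq 0$, I would compare $u$ with its translate $u_h(x):=u(x+he_n)$ for small $h>0$: since $f$ is Lipschitz, $u_h$ is harmonic in a domain strictly containing $D_{1/2}$, and $u_h\geq 0=u$ on $\partial D\cap\overline{D_{1/2}}$. The maximum principle then gives $u_h\geq u$ in $D_{1/2}$; dividing by $h$ and letting $h\to 0$ yields $\partial_{x_n}u\geq 0$. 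For the upper bound, since $f$ is $L$-Lipschitz with $f(0)=0$, the point $(0,d)$ lies at distance at least $d/(1+L)$ from $\partial D$, so $u$ is harmonic in $B_{d/(2(1+L))}(0,d)$. The classical interior gradient estimate combined with Harnack's inequality then yields $|\nabla u(0,d)| \leq C u(0,d)/d$.

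The lower bound is the main content and rests on the non-degeneracy estimate
$$u(0,d/2)\leq (1-c_0)u(0,d)$$
for some universal $c_0>0$ depending only on $L$. I would transfer this via Dahlberg's boundary Harnack (Lemma~\ref{lem:D}) from a reference harmonic function $w$ in $D_{1/2}$ vanishing on the Lipschitz graph, for which one constructs an explicit barrier (for instance, in the enclosing cone determined by $L$) to verify $w(0,d/2)\leq(1-c_0)w(0,d)$ directly. Boundary Harnack then forces the same ratio bound on $u$ up to universal constants. Given this estimate,
$$c_0\, u(0,d) \leq u(0,d)-u(0,d/2) = \int_{d/2}^d \partial_{x_n}u(0,s)\,ds,$$
so applying the Harnack inequality to the nonnegative harmonic function $\partial_{x_n}u$ on an interior ball of radius comparable to $d$ around $(0,3d/4)$ to compare $\partial_{x_n}u(0,s)$ with $\partial_{x_n}u(0,d)$ for $s\in[d/2,d]$ produces $\partial_{x_n}u(0,d)\geq c\, u(0,d)/d$.

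The main obstacle is precisely the non-degeneracy estimate. Every other step is a direct consequence of standard Harnack-type inequalities and interior regularity, but proving that $u(0,d/2)$ is bounded away from $u(0,d)$ by a multiplicative constant genuinely uses the Lipschitz (not merely continuous) nature of $\partial D$, and the reference function $w$ must be chosen concretely enough that its growth is computable via an explicit sub-/supersolution barrier.
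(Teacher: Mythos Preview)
The paper does not supply its own proof of this lemma; it is quoted from \cite{c2}, so there is no in-paper argument to compare against directly. Your upper bound is correct: the Lipschitz graph guarantees an interior ball at $(0,d)$ of radius comparable to $d$, and the interior gradient estimate plus Harnack gives $|\nabla u(0,d)|\le C\,u(0,d)/d$.

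The translation argument for $\partial_{x_n}u\ge 0$, however, has a genuine gap. You verify $u_h\ge 0=u$ only on the graph piece $\partial D\cap\overline{D_{1/2}}$, but the maximum principle in $D_{1/2}$ requires $u_h\ge u$ on \emph{all} of $\partial D_{1/2}$, in particular on the lateral faces $\{|x'|=1\}$ and the top $\{x_n=L\}$, where nothing in the hypotheses forces $u(x+he_n)\ge u(x)$. The argument in \cite{c2} is more delicate: one must control the contribution of the far boundary, for instance by showing that the harmonic measure of $\partial D_{1/2}\setminus\partial D$ seen from $(0,d)$ is $O(d^{\beta})$ and balancing this against the strict positivity of $u_h$ on the graph, or by a barrier in an interior cone combined with the boundary Harnack principle. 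Your non-degeneracy route to the lower bound is the right idea, but Dahlberg's comparison (Lemma~\ref{lem:D}) alone only yields $u(0,d/2)/u(0,d)\le C\cdot w(0,d/2)/w(0,d)$ with a constant $C$ that need not be below $1$; to get $u(0,d/2)\le(1-c_0)u(0,d)$ you need the H\"older continuity of the quotient (Lemma~\ref{lem:JK}), which gives $u(0,d/2)/u(0,d)=\bigl(w(0,d/2)/w(0,d)\bigr)(1+O(d^{\gamma}))<1$ for $d<c(L)$. Finally, your Harnack step on $\partial_{x_n}u$ presupposes its nonnegativity, so the chain is not closed until the monotonicity is established by an argument that actually handles the full boundary of the comparison domain.
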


\begin{lemma} [\cite{jk}, Lemma 4.1] \label{lem:JeKi}  Let $\Omega$ be
Lipschitz domain contained in $B_{10}(0)$. There exists a
dimensional constant $\beta_n> 0$ such that for any $\zeta \in
\partial \Omega$, $0 < 2r < 1$ and positive harmonic function $u$ in
$\Omega\cap B_{2r}(\zeta)$, if $u$ vanishes continuously on
$B_{2r}(\zeta) \cap \partial \Omega$, then for $x \in \Omega \cap
B_r(\zeta)$,
$$
 u(x) \leq C(\dfrac{|x - \zeta|}{r})^{\beta_n} {\rm
sup} \{u(y) : y \in
\partial B_{2r}(\zeta) \cap \Omega\}
$$
 where $C$ depends only on
the {\rm Lipchitz} constants of $\Omega$.
\end{lemma}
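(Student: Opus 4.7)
The plan is to prove the inequality by iterating a fixed one-step decay estimate on dyadic scales. For $0<\rho\le 2r$ set $M(\rho):=\sup\{u(y):y\in\Omega\cap B_\rho(\zeta)\}$. Since $u$ is harmonic in $\Omega\cap B_{2r}(\zeta)$ and vanishes continuously on $\partial\Omega\cap B_{2r}(\zeta)$, the maximum principle gives $M(\rho)=\sup\{u(y):y\in\partial B_\rho(\zeta)\cap\overline{\Omega}\}$, so in particular the right-hand side of the stated inequality equals $C(|x-\zeta|/r)^{\beta_n}M(2r)$. The whole proof reduces to finding $\gamma=\gamma(L)\in(0,1)$ with
\begin{equation}\label{onestep}
M(\rho/2)\le \gamma\,M(\rho)\qquad\text{for all }\rho\le 2r.
\end{equation}
Iterating \eqref{onestep} on the scales $\rho_j=2^{-j}(2r)$ yields $M(2^{-k}(2r))\le \gamma^k M(2r)$, and for $x\in\Omega\cap B_r(\zeta)$, choosing $k$ with $2^{-k}(2r)\le |x-\zeta|<2^{-(k-1)}(2r)$ produces the desired estimate with exponent $\beta_n=\log(1/\gamma)/\log 2$ and a purely geometric constant $C$.

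To establish \eqref{onestep}, I would use the exterior corkscrew property automatic for any Lipschitz domain: there exist $c=c(L)\in(0,1)$ and a point $y_0$ with
\[
B_{c\rho}(y_0)\subset B_\rho(\zeta)\setminus\overline{\Omega},
\]
obtained by inscribing a ball of comparable radius in the exterior cone at $\zeta$, whose aperture is bounded below in terms of $L$. Let $w$ be the harmonic function in the annular region $A:=B_\rho(\zeta)\setminus\overline{B_{c\rho}(y_0)}$ with boundary values $M(\rho)$ on $\partial B_\rho(\zeta)$ and $0$ on $\partial B_{c\rho}(y_0)$. Because $B_{c\rho}(y_0)\cap\Omega=\emptyset$, the set $\Omega\cap B_\rho(\zeta)$ is a subset of $A$; on its boundary one has $u\le M(\rho)=w$ on $\partial B_\rho(\zeta)\cap\overline{\Omega}$ and $u=0\le w$ on $\partial\Omega\cap B_\rho(\zeta)$, so the maximum principle forces $u\le w$ throughout $\Omega\cap B_\rho(\zeta)$.

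The function $w/M(\rho)$ is the harmonic measure, evaluated from $x$, of the outer sphere in $A$; by scale invariance of the Laplace equation it depends only on the normalized geometry $(1,c,|y_0-\zeta|/\rho)$, which is fixed modulo a factor controlled by $c=c(L)$. An explicit computation with Newtonian potentials (logarithmic if $n=2$) then yields $\delta=\delta(c,n)>0$ with $w\le (1-\delta)M(\rho)$ on $\overline{B_{\rho/2}(\zeta)}$; combining with $u\le w$ gives \eqref{onestep} with $\gamma:=1-\delta$.

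The main technical point is quantifying the exterior corkscrew radius $c\rho$ and the corresponding decay constant $\delta$ purely in terms of the Lipschitz constant and the dimension, so that the barrier argument is truly scale invariant and can be iterated indefinitely. Once that barrier comparison is pinned down, the dyadic iteration is routine, and the exponent $\beta_n$ emerges in the form above.
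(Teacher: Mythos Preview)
The paper does not prove this lemma; it is quoted verbatim from Jerison--Kenig \cite{jk} as a preliminary result, with no argument supplied. So there is no ``paper's own proof'' to compare against.

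Your argument is correct and is in fact the classical one: the exterior corkscrew condition (automatic for Lipschitz domains) furnishes a ball in the complement at every scale, the annular barrier gives a fixed one-step decay $M(\rho/2)\le\gamma M(\rho)$ with $\gamma=\gamma(n,L)<1$, and dyadic iteration turns this into the H\"older estimate with exponent $\log(1/\gamma)/\log 2$. The only point I would tighten is the claim that $w\le(1-\delta)M(\rho)$ on $\overline{B_{\rho/2}(\zeta)}$: you should make explicit that after rescaling the normalized hole $B_c(y_0/\rho)$ lies in a compact family of configurations (since $c$ is fixed and $c\le|y_0|/\rho\le 1-c$), so the harmonic measure of the inner sphere, evaluated on $\overline{B_{1/2}(0)}$, is uniformly bounded below. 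One minor remark: your exponent depends on $L$ as well as $n$, whereas the lemma as stated calls $\beta_n$ ``dimensional''; this is harmless here because the paper works throughout under the standing hypothesis $L\le L_n$, so $\beta_n$ can indeed be taken to depend only on $n$.
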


Next, we point out that we use the notion of viscosity solutions for
our investigation.  When $\{u_0=0\}$ is of zero Lebesgue measure, it
was proved in \cite{kp} that the viscosity solution of $(ST2)$ is
unique and coincides with the usual weak solutions. (See \cite{kp}
for the definition as well as other properties of viscosity
solutions.) Below we state important properties of viscosity
solutions.
\begin{lemma}\label{basic:visc}
Suppose $u$ is a viscosity solution of (ST2). Then
\begin{itemize}
\item[(a)] $u$ is caloric in its positive and negative phases.
\item[(b)]  $-u$ is also a viscosity solution of (ST2) with boundary data $-g$.
\item[(c)] $u^+=\max(u,0)$ (or $u^-=-\min(u,0)$) is a viscosity subsolution (or supersolution)  of (ST2)
with initial data $u_0^+$(or $u_0^-$).
\end{itemize}
\end{lemma}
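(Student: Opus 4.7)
The statement is a direct verification against the definition of viscosity solution for (ST2), so my plan is to check each of the three clauses in turn, exploiting the symmetry between the two phases.

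For part (a) there is essentially nothing to prove beyond unwinding the definition: on the open sets $\{u>0\}$ and $\{u<0\}$ the viscosity solution of (ST2) satisfies the linear heat equation in the viscosity sense, and any such viscosity solution is classical in the interior by standard interior regularity for $w_t = \Delta w$ (cf.~\cite{kp}), hence caloric.

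For part (b) I would walk through the definition term-by-term under $u \mapsto -u$. The bulk equation is linear and therefore preserved. The key geometric bookkeeping is that the two phases swap, $\{-u>0\} = \{u<0\}$ and $\{-u<0\}=\{u>0\}$; the free boundaries coincide as sets, $\partial\{-u>0\} = \partial\{u>0\}$; the inward spatial normal $\tilde\nu$ of the new positive phase equals $-\nu$, so the normal velocity transforms as $\tilde V = -V$; and the positive/negative parts exchange, with $(-u)^+ = u^-$ and $(-u)^- = u^+$. Substituting into the original free boundary condition $V = |Du^+|-|Du^-|$ gives
$$
\tilde V \;=\; -V \;=\; |Du^-|-|Du^+| \;=\; |D(-u)^+| - |D(-u)^-|,
$$
which is exactly the free boundary condition for $-u$; the lateral datum becomes $-g$, and the initial datum $-u_0$.

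For part (c) I would prove that $u^+$ is a viscosity subsolution of (ST2); the supersolution statement for $u^-$ then follows by applying the same argument to $-u$ and invoking (b). On the open set $\{u^+>0\} = \{u>0\}$ we have $u^+ = u$, which is caloric by (a), so the only nontrivial clause is the free boundary test. The crucial observation is that $(u^+)^- \equiv 0$, so the subsolution inequality required from $u^+$ reduces to $(u^+)_t/|Du^+| \leq |Du^+|$ at free boundary points, and this is strictly weaker than the identity $V = |Du^+| - |Du^-| \leq |Du^+|$ enforced on $u$. Passing this observation into viscosity form amounts to checking that every smooth strict supersolution touching $u^+$ from above at a free boundary point can be used as a test function against $u$ itself: since $u \le 0$ on the other side of the free boundary while such a test function is $\geq 0$ there, one extends it by zero across its own zero set to produce an admissible two-phase test function.

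The only step that is not purely mechanical is this last test-function extension in (c): one must verify that the extension by zero preserves the smooth strict one-phase supersolution structure near the touching free boundary point, or, alternatively, invoke the equivalent pointwise formulation of viscosity subsolutions (see \cite{kp}) that does not require simultaneously controlling both phases of the test. I expect this to be the main, if minor, obstacle; once this bookkeeping is settled, parts (a), (b), (c) all reduce to direct substitution.
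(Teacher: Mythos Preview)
The paper does not give its own proof of this lemma; it is recorded among the basic properties of viscosity solutions and implicitly deferred to \cite{kp}. So there is nothing in the paper to compare against, and your direct verification is the natural thing to do. Parts (a), (b), and the $u^+$ half of (c) are handled correctly (including the test-function bookkeeping you already flag).

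The gap is in the second half of (c). Your reduction ``apply the same argument to $-u$ and invoke (b)'' shows that $(-u)^+ = u^-$ is a \emph{subsolution} of (ST2), not a supersolution. A direct computation confirms this: the outward normal velocity of $\{u^->0\}=\{u<0\}$ equals $|Du^-|-|Du^+|$, which satisfies the subsolution inequality $\le |Du^-|=|D(u^-)^+|-|D(u^-)^-|$ but violates the supersolution inequality $\ge |Du^-|$ unless $|Du^+|\equiv 0$. So the symmetry via (b) cannot deliver the supersolution claim as printed. In fact the paper's later comparison arguments (for instance the bound $-v^-\le u$ in the proofs of Lemmas~\ref{backwardharnack0} and~\ref{distance0}) use exactly the subsolution property of $u^-$, so the printed word ``supersolution'' is very likely a slip; but you should either say so explicitly or supply a separate argument, rather than assert that the symmetry yields it.
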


\begin{lemma}  [Comparison principle, \cite{kp}] \label{thm:cp}Let $u,v$ be respectively viscosity sub-
and supersolutions of (ST2) in $D\times (0,T)\subset Q$ with initial
data $u_0\prec v_0$ in $D$. If
 $u\leq v$ on $\partial D$ and $u<v$ on $\partial D \cap\bar{\Omega}(u)$ for $0\leq
t< T$,
 then $u(\cdot,t)\prec v(\cdot,t)$ in $D$ for $t\in [0,T)$.
\end{lemma}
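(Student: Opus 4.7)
The plan is a standard viscosity-comparison argument by contradiction, with the two-phase free-boundary step as the essential novelty. First, I would replace $v$ by a strict supersolution $v_\delta$ obtained via a small parabolic dilation (or an inf-convolution in space-time) of $v$, so that the free-boundary inequality for $v_\delta$ as a viscosity supersolution becomes strict: $V^{v_\delta} < |Dv_\delta^+| - |Dv_\delta^-|$ at every regular free-boundary point. For $\delta$ small enough, the hypotheses $u_0 \prec (v_\delta)_0$ and $u < v_\delta$ on the parabolic boundary of $D$ are preserved by the assumptions. Define
$$
t_0 := \sup\{\,t \in [0,T) : u(\cdot, s) \prec v_\delta(\cdot, s) \text{ for all } s \leq t\,\}
$$
and assume toward contradiction that $t_0 < T$.

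At $t_0$ the relation $\prec$ first fails at some point $x_0$, which must be interior to $D$ thanks to the strict boundary ordering. The possible contact configurations are: (i) interior touching inside $\Omega_{t_0}(u) \cap \Omega_{t_0}(v_\delta)$ (or its negative-phase analogue), where by Lemma~\ref{basic:visc}(a) both functions are caloric; and (ii) simultaneous free-boundary touching $x_0 \in \Gamma_{t_0}(u) \cap \Gamma_{t_0}(v_\delta)$. Case (i) is eliminated by the strong parabolic maximum principle applied to $v_\delta - u$ in the relevant phase, together with the strict ordering at $t=0$ and the strict dilation used to produce $v_\delta$. Case (ii) is the heart of the argument: using the local containment of positive phases $\Omega_t(u) \subset \Omega_t(v_\delta)$ for $t < t_0$ and the first contact at $(x_0, t_0)$, I would produce a smooth hypersurface $\Sigma$ through $(x_0, t_0)$ (regularity guaranteed by the sup/inf-convolution step) that is an outer test surface for $\Gamma(u)$ and an inner test surface for $\Gamma(v_\delta)$. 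The viscosity subsolution property of $u$ then gives
$$
V_\Sigma(x_0,t_0) \geq |Du^+|(x_0,t_0) - |Du^-|(x_0,t_0),
$$
while the strict supersolution property of $v_\delta$ gives
$$
V_\Sigma(x_0,t_0) < |Dv_\delta^+|(x_0,t_0) - |Dv_\delta^-|(x_0,t_0).
$$
Combining these with the Hopf-type comparisons dictated by the touching configuration \textendash{} $|Du^+|(x_0,t_0) \geq |Dv_\delta^+|(x_0,t_0)$ (since $u^+ \leq v_\delta^+$ inside and equality holds at the common boundary) and $|Du^-|(x_0,t_0) \leq |Dv_\delta^-|(x_0,t_0)$ (since $u^- \geq v_\delta^-$ inside with equality at the common boundary) \textendash{} produces the desired contradiction.

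The main obstacle is precisely this final step in case (ii): the two-phase balance couples positive and negative fluxes, so one must secure both Hopf-type comparisons simultaneously at the contact point, which does not arise in (ST1). Carrying this out rigorously requires (a) the sup/inf-convolution to supply a sufficiently smooth test surface $\Sigma$ at $(x_0,t_0)$; (b) the hypothesis $|\{u_0 = 0\}| = 0$, which propagates to all later times by \cite{rb,gz,kp} and prevents mushy regions where $\Gamma_t(u) \neq \partial \{u(\cdot,t)<0\}$; and (c) symmetry between the two phases via Lemma~\ref{basic:visc}(b), to cover the case where contact occurs on the negative-phase side. Modulo these technicalities the argument is routine, and I would expect to reproduce the statement exactly as recorded in \cite{kp}.
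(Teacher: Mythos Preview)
The paper does not give its own proof of this lemma: it is stated with a citation to \cite{kp} and used as a black box throughout. So there is no in-paper argument to compare against; your sketch is essentially an outline of the proof one finds in \cite{kp} (and, for the one-phase precursor, in \cite{k}), namely strict perturbation of the supersolution followed by a first-touching-time argument.

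That said, your case (ii) as written has a genuine gap. The Hopf inequalities are stated with the wrong sign: from $u^+\le v_\delta^+$ in $\Omega_{t_0}(u)$ with equality at $x_0$ one gets $|Du^+|(x_0,t_0)\le |Dv_\delta^+|(x_0,t_0)$, not $\ge$; and symmetrically $|Du^-|(x_0,t_0)\ge |Dv_\delta^-|(x_0,t_0)$. With the correct signs your two displayed inequalities for $V_\Sigma$ are compatible and no contradiction follows. The deeper issue is that for a merely viscosity subsolution $u$ there is in general no well-defined $|Du^\pm|(x_0,t_0)$ to speak of; the comparison in \cite{kp} does not extract these quantities. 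Instead, after sup/inf-convolution the regularized function (say $\tilde v_\delta$) is itself a \emph{classical} strict supersolution near the contact point, and one invokes the very definition of ``$u$ is a viscosity subsolution'' with $\tilde v_\delta$ (or a suitable local barrier built from it) as the test function: the touching from above at a free-boundary point is forbidden directly by that definition. So the fix is to drop the separate Hopf step and appeal to the viscosity definition against the regularized barrier; the contradiction then comes out in one stroke.
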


Below we state  a distance estimate for the free boundary  and
Harnack inequality for the one-phase solution $u$ of (ST1).

\begin{lemma} [\cite{ck}, Lemma 2.2] \label{good}
Let $u$ be given as in Theorem~\ref{thm:ckmain}. There exists $t_0
=t_0(N_0, M_0, n)>0$ such that if $x_0 \in \Gamma_0 $ and $t \leq
t_0$, then
\begin{equation} \label{eq:001}
\frac{1}{C} t^{1/(2 -\alpha)} \leq d(x_0, t ) \leq C t^{1/(2
-\beta)}
\end{equation}
where $\alpha $ and $\beta$ are given in (\ref{growth}),  $C$
depends on $N_0$, $M_0$ and $n$, and $d(x_0,t)$ denotes the distance
that $\Gamma$ moved  from the point $x_0$ during the time $t$, i.e.,
$$d(x_0, t) := \sup\{d: u(x_0+de_n,t)>0\}. $$
\end{lemma}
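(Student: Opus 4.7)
\medskip

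\noindent\emph{Proof plan.} The plan is to bracket $d(x_0,t)$ by comparison with explicit sub- and super-solutions of $(ST1)$ whose free boundaries move at the prescribed rates, applying the comparison principle Lemma~\ref{thm:cp}. After translating $x_0$ to the origin with $e_n$ the outward normal, assumption (I-c) together with Lemma~\ref{lem:C-C} lets me replace $u_0$ near $\partial\Omega_0$ by a positive harmonic function $H$ in $\Omega_0 \cap B_1(0)$ vanishing on $\partial \Omega_0$, at the cost of an $O(s^2)$ error. By (\ref{growth}) this gives
\begin{equation*}
c\, s^\alpha \;\leq\; u_0(-se_n,0) \;\leq\; C\, s^\beta, \qquad 0 < s < 1.
\end{equation*}
The guiding heuristic is that the free-boundary speed $V = |Du|(\cdot,t)$ is comparable, via a boundary-Harnack comparison of $u(\cdot,t)$ with $u_0$ (Lemma~\ref{lem:fgs84} in the near-Lipschitz set $\{u>0\}$), to $u_0(-de_n,0)/d$ with $d = d(0,t)$; integrating $\dot d \sim V$ against the two-sided bound above formally reproduces the lemma.

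To make this rigorous on the upper side, I would construct a super-solution of $(ST1)$ of the form $\Psi(x,t) = \Phi(x_n - R(t))$ with $R(t) = c_0\, t^{1/(2-\beta)}$ and $\Phi$ a profile agreeing with $A(-s)_+^\beta$ away from the front but Lipschitz-truncated in a thin layer adjacent to $s=0$, so that $|\Phi'(0^-)| \leq R'(t)$ and the Stefan super-solution inequality $V_\Psi \geq |D\Psi|$ holds on $\{\Psi = 0\}$. The heat-equation super-solution property in the bulk follows from the direct computation
\begin{equation*}
\Psi_t - \Delta\Psi \;=\; A\beta(R-x_n)^{\beta-2}\bigl[(R-x_n)R'(t) + (1-\beta)\bigr] \;\geq\; 0
\end{equation*}
for $x_n < R(t)$. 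The small Lipschitz constant (I-b), combined with the $s^\beta$ bound above, lets me enforce $\Psi(\cdot,0) \geq u_0$ in $B_1(0)$; comparison then gives $d(0,t) \leq R(t) = C t^{1/(2-\beta)}$.

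For the lower bound I would dualise: build a sub-solution $\psi(x,t) = G(x_n - r(t))$ with $r(t) = c_1\, t^{1/(2-\alpha)}$ and $G$ essentially $A'(-s)_+^\alpha$, now Lipschitz-augmented near $s=0$ (since the pure-power has vanishing gradient when $\alpha > 1$) so that $|G'(0^-)| \geq r'(t)$ and the sub-solution inequality $V_\psi \leq |D\psi|$ holds. The star-shapedness (I-a) provides an interior Lipschitz wedge at $x_0$ inside which $\psi(\cdot,0) \leq u_0$ can be arranged; comparison then yields $d(0,t) \geq c\, t^{1/(2-\alpha)}$. The main obstacle is precisely this matching at the front: the pure-power profiles have either infinite ($\beta < 1$) or zero ($\alpha > 1$) slope at $s=0$, and neither is compatible with a finite free-boundary speed, so a careful Lipschitz modification at the front is required while preserving both the interior inequality and the initial ordering. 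The smallness of the Lipschitz constant assumed in (I-b) is what makes the planar sub/super-profiles fit against the curved initial free boundary $\partial \Omega_0$ without destroying the ordering.
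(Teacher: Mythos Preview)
The paper does not prove this lemma; it is quoted verbatim from \cite{ck} (Lemma~2.2 there), so there is no in-paper argument to compare against. Your heuristic---that $\dot d \sim u_0(-de_n)/d$ together with $d^\alpha \lesssim u_0(-de_n) \lesssim d^\beta$ integrates to the stated power laws---is exactly right, and a barrier argument is the natural way to make it rigorous.

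There is, however, a genuine gap in the barriers you propose. A purely one-dimensional profile $\Psi(x,t)=\Phi(x_n-R(t))$ with $R(0)=0$ cannot dominate $u_0$ at $t=0$: the Lipschitz graph $\partial\Omega_0$ protrudes into $\{x_n>0\}$ by as much as $L|x'|$, so on the set $\{0<x_n<f(x')\}$ one has $u_0>0$ while $\Psi(\cdot,0)=0$. The sentence ``the small Lipschitz constant (I-b) \ldots\ lets me enforce $\Psi(\cdot,0)\geq u_0$'' does not survive this: for any choice of $A$ the inequality $C(L|x'|-x_n)^\beta \le A(-x_n)^\beta$ fails as $x_n\to 0^-$ with $x'\neq 0$. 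If instead you shift so that $R(0)\sim L$ (or $Lr$ in $B_r$), the comparison gives only $d(x_0,t)\le R(0)+Ct^{1/(2-\beta)}$, and the additive term dominates for small $t$, destroying the estimate. The dual problem appears for the subsolution: a planar $\psi$ whose zero set is $\{x_n=r(t)\}$ will sit above $u_0$ on the part of $\{x_n<0\}$ lying outside $\Omega_0$.

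The standard repairs are either to use barriers whose zero level set already contains (or is contained in) the Lipschitz cone at $x_0$---for instance a radial barrier centered at $x_0\pm Re_n$, or a profile in the variable $x_n - L|x'|$---or to run a scale-matching argument: at scale $r$ the displacement of $\Gamma_0$ from the tangent plane is $O(Lr)$, so one compares on $B_r(x_0)$ with $r$ chosen so that $Lr \ll t^{1/(2-\beta)}$. Either route works, but neither is the planar comparison you wrote down. The Lipschitz truncation at the front that you flag as the ``main obstacle'' is, by contrast, a routine technicality once the geometry of the zero set is handled.
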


\begin{lemma} [\cite{ck}, Lemma 2.3]
Let $u$ be given as in Theorem~\ref{thm:ckmain}. There exists $d_0$
depending on $N_0$, $M_0$ and $n$ such that if $x_0 \in \Gamma_0$
and $ d \leq d_0$, then
$$u(x_0-de_n,t) \leq Cu(x_0-de_n,0) \hbox{ for } 0 \leq t\leq t(x_0,d)$$
where $C$ depends on $N_0$, $M_0$ and $n$.
\end{lemma}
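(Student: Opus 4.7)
The plan is to transfer an $L^\infty$ bound on $u$ at a deep interior reference point to the near-boundary point $x_0-de_n$ via an elliptic boundary Harnack inequality, making essential use of the Lipschitz structure of $\Omega_t(u)$ preserved by the one-phase dynamics. Write $m:=u(x_0-de_n,0)$ and let $z:=x_0-e_n$, so that $u_0(z)\sim 1$ by the normalization of Theorem~\ref{thm:ckmain}.

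First I would verify that $\Omega_t(u)$ remains star-shaped with respect to $B_{r_0}(0)$ for every $t\geq 0$: the parabolic dilations $u_\lambda(x,t):=u(\lambda x,\lambda^2 t)$ with $\lambda>1$ are still solutions of $(ST1)$, and comparison through Lemma~\ref{thm:cp} yields $\Omega_0\subset\Omega_t$ together with a Lipschitz constant $L'<L_n$ for $\Omega_t$ uniform in $t$. Combined with Lemma~\ref{good} applied at $x_0$ over $[0,t(x_0,d)]$, choosing $d_0$ small enough ensures $\operatorname{dist}(x_0-de_n,\Gamma_t)\sim d$ throughout this time interval.

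Next, fix $t\in[0,t(x_0,d)]$ and let $\omega_t$ denote the harmonic measure of $\partial B_2(x_0)\cap\Omega_t$ in the Lipschitz domain $\Omega_t\cap B_2(x_0)$: that is, $\omega_t$ is harmonic in this domain, vanishes on $\Gamma_t$, and equals $1$ on the outer portion. Since $u(\cdot,t)$ is almost harmonic in $\Omega_t\cap B_2(x_0)$ by Lemma~\ref{lem:almostharmonic}, applying the boundary Harnack inequality (Lemma~\ref{lem:D}) to the sub- and super-harmonic envelopes of $u(\cdot,t)$ and to $\omega_t$ gives
\[
\frac{u(x_0-de_n,t)}{\omega_t(x_0-de_n)}\;\leq\; C\,\frac{u(z,t)}{\omega_t(z)}\;\leq\;\frac{CM_0}{\omega_t(z)}.
\]
Since $z$ is a deep interior point at distance $\sim 1$ from $\Gamma_t$ in the uniformly Lipschitz domain, $\omega_t(z)$ is bounded below by a universal constant, so $u(x_0-de_n,t)\leq CM_0\,\omega_t(x_0-de_n)$.

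To finish the proof one must show $\omega_t(x_0-de_n)\leq Cm$. Applying Lemma~\ref{lem:D} to the pair $(\omega_0,u_0)$ at $t=0$ (both harmonic in the strip where $u_0$ is assumed harmonic near $\Gamma_0$, both vanishing on $\Gamma_0$) identifies $\omega_0(x_0-de_n)\sim m$ up to a universal factor, so it remains to compare $\omega_t$ with $\omega_0$ at $x_0-de_n$. This harmonic-measure stability under the outward perturbation $\Gamma_0\mapsto\Gamma_t$ is the main obstacle: the raw bound $d(x_0,t)\leq Ct^{1/(2-\beta)}$ from Lemma~\ref{good} can exceed $d$ for small $d$, so a naive pointwise comparison is insufficient. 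The fix is to use that $\Gamma_s$ is uniformly Lipschitz for $s\in[0,t(x_0,d)]$ and to apply a further boundary Harnack step inside $\Omega_t$ at a point offset by $O(d)$ outward from $x_0$, absorbing the deformation into a slightly enlarged Lipschitz reference domain, so that all resulting constants depend only on $n$, $L'$, and $N_0$.
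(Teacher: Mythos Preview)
This lemma is merely cited from \cite{ck} and not proved in the present paper, so there is no proof here to compare against. Evaluating your argument on its own merits, however, there is a genuine gap.

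First, the hypotheses of Theorem~\ref{thm:ckmain} are purely local: only (I-b), (I-c), (I-d) are assumed in $B_2(0)$, not the global star-shapedness (I-a). Your opening paragraph, which argues that $\Omega_t(u)$ remains star-shaped via parabolic dilations and comparison, therefore has no basis in this setting.

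Second, and more seriously, the scale accounting in your harmonic-measure comparison does not close. From Lemma~\ref{good} alone one only gets $d(x_0,t)\le Ct^{1/(2-\beta)}$, and since $t(x_0,d)=d^2/m$ can be as large as $d^{2-\alpha}$, the free boundary may have advanced by order $d^{(2-\alpha)/(2-\beta)}\ge d^{5/7}\gg d$ by time $t(x_0,d)$. Consequently $\operatorname{dist}(x_0-de_n,\Gamma_t)$ is of order $d^{5/7}$, not $d$ as you assert, and the growth estimate \eqref{growth} then gives $\omega_t(x_0-de_n)$ of order at least $(d^{5/7})^{\alpha}$, which is much larger than $m\sim d^{\alpha}$. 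Your proposed ``fix'' of offsetting by $O(d)$ cannot absorb a deformation of size $d^{5/7}$; the reference domain you would need is not a small perturbation of $\Omega_0$ in the relevant scale. This is precisely why the crude distance bound of Lemma~\ref{good} is insufficient and why the sharper conclusion $d(x_0,t(x_0,d))\sim d$ (which is part of Theorem~\ref{thm:ckmain}(c)) cannot be invoked without circularity.

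A smaller issue: your appeal to Lemma~\ref{lem:almostharmonic} requires $\Omega$ to be Lipschitz in \emph{space and time} (it inherits the hypotheses of Lemma~\ref{lem:acs96}), whereas you have only argued spatial Lipschitz regularity at each fixed $t$.
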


The following monotonicity formula  by Alt-Caffarelli-Friedman
prevents the scenario that both phases compete with large pressure
in our problem.

\begin{lemma} [\cite{acf84}] \label{monotonicityformula}
Let $h_+$ and $h_-$ be nonnegative continuous functions in $B_1(0)$
such that $\Delta h_{\pm} \geq 0$ and $h_+ \cdot h_- =0$ in
$B_1(0)$. Then the functional
$$\phi(r) =\frac{1}{r^4} \int_{B_r(0)} \frac{|\nabla
h_+|^2}{|x|^{n-2}}dx \int_{B_r(0)} \frac{|\nabla
h_-|^2}{|x|^{n-2}}dx$$ is monotone increasing in $r$, $0 <r<1$.

\end{lemma}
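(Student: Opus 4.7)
My plan is to show that $\log\phi$ is nondecreasing. Setting $I_\pm(r) := \int_{B_r(0)}|\nabla h_\pm|^2\,|x|^{2-n}\,dx$ so that $\phi(r) = r^{-4}I_+(r)I_-(r)$, logarithmic differentiation reduces the problem to proving
$$
\frac{I_+'(r)}{I_+(r)} + \frac{I_-'(r)}{I_-(r)} \geq \frac{4}{r}.
$$
Switching to polar coordinates I would compute $I_\pm'(r) = r^{2-n}\int_{\partial B_r(0)}|\nabla h_\pm|^2\,d\sigma$, so the desired inequality couples each bulk integral to its own boundary derivative.

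The first main step is a Rayleigh-type bound. For each fixed $r$ I set $\omega_\pm(r) := \{\theta\in S^{n-1} : h_\pm(r\theta)>0\}$, let $\lambda_\pm(r)$ be the first Dirichlet eigenvalue of $-\Delta_{S^{n-1}}$ on $\omega_\pm(r)$, and define the characteristic exponent
$$
\gamma_\pm(r) := -\tfrac{n-2}{2} + \sqrt{\bigl(\tfrac{n-2}{2}\bigr)^2+\lambda_\pm(r)}.
$$
The key estimate I aim to establish is
$$
I_\pm(r) \leq \frac{r}{2\gamma_\pm(r)}\,I_\pm'(r), \quad\text{equivalently}\quad \frac{I_\pm'(r)}{I_\pm(r)} \geq \frac{2\gamma_\pm(r)}{r}.
$$
I would prove it by comparing $h_\pm$ on the cone $C_\pm := \{\rho\theta : 0<\rho\leq r,\ \theta\in\omega_\pm(r)\}$ with the homogeneous harmonic function $\rho^{\gamma_\pm}\varphi_\pm(\theta)$, where $\varphi_\pm$ is the first Dirichlet eigenfunction on $\omega_\pm(r)$. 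Using the subharmonicity of $h_\pm$ together with the Rayleigh quotient characterization of $\lambda_\pm$ on each sphere $\partial B_\rho$, the desired bound falls out after integration by parts against the weight $|x|^{2-n}$.

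The second main step is the Friedland--Hayman inequality on $S^{n-1}$: for any two disjoint open subsets $\omega_1,\omega_2\subset S^{n-1}$ with characteristic exponents $\gamma_1,\gamma_2$,
$$
\gamma_1 + \gamma_2 \geq 2,
$$
with equality attained on two opposite hemispheres. Since $h_+ h_- \equiv 0$ in $B_1(0)$, the sets $\omega_+(r)$ and $\omega_-(r)$ are disjoint for every $r$, so this inequality applies and yields $\gamma_+(r) + \gamma_-(r) \geq 2$. Combining with the Rayleigh-type bound gives
$$
\frac{I_+'(r)}{I_+(r)} + \frac{I_-'(r)}{I_-(r)} \geq \frac{2\gamma_+(r)+2\gamma_-(r)}{r} \geq \frac{4}{r},
$$
which is the inequality required for monotonicity.

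The hard part will be the Friedland--Hayman inequality itself: its proof uses spherical symmetrization to reduce to a pair of antipodal polar caps, together with an explicit computation of $\gamma$ on a cap via associated Legendre-type functions, and this is the genuine analytic content of the argument. The Rayleigh-type bound in the first step is comparatively routine but requires mild care since $h_\pm$ is only continuous on $\partial B_r$; I would handle this by working at a full-measure set of ``good'' radii $r$ where the spherical slice of $\{h_\pm>0\}$ is sufficiently regular, or alternatively by approximating $h_\pm$ by smooth nonnegative subharmonic functions with the same vanishing set and passing to the limit in $r$-a.e.\ fashion.
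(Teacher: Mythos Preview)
The paper does not prove this lemma: it is stated with a citation to \cite{acf84} and used as a black box, with no argument given. Your proposal outlines the standard proof of the Alt--Caffarelli--Friedman monotonicity formula (logarithmic differentiation, the Rayleigh-type bound $I_\pm'(r)/I_\pm(r)\geq 2\gamma_\pm(r)/r$, and the Friedland--Hayman inequality $\gamma_+ + \gamma_-\geq 2$), which is indeed how the result is established in the original reference and in standard treatments such as \cite{cs}. So there is nothing to compare against in this paper; your sketch is correct and matches the classical approach.
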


\vspace{20pt}

\begin{corollary} \label{moncor}
Let $\partial \Omega_0 \subset \R^n$ be star-shaped with respect to
$B_{1}(0) \subset \Omega_0$ and suppose $B_{4/3}(0) \subset \Omega_0
\subset B_{5/3}(0) $. Let $h_+$ be the harmonic function in
$\Omega_0 - B_1(0)$ with boundary values $h_+ =0 $ on $\partial
\Omega_0$, and $h_+=1$ on $\partial B_1(0)$. Let $h_-$ be the
harmonic function in $B_2(0) -\Omega_0$ with boundary values $h_- =0
$ on $\partial \Omega_0$, and $h_-=1$ on $\partial B_2(0)$.   Then
there exists a sufficiently large dimensional constant $M>0$ such
that
$$\frac{h_+(x_0-re_n )}{r } \geq M
\hbox{ implies } \frac{h_-(x_0+re_n)}{r } \leq 1$$ for $x_0 \in
\partial \Omega_0$,  $e_n=x/|x|$ and $0 \leq r \leq 1/6$.

\end{corollary}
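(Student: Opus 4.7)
The strategy is to apply the Alt--Caffarelli--Friedman monotonicity formula (Lemma~\ref{monotonicityformula}) at the free boundary point $x_0$, exploiting that $h_+$ and $h_-$ live in complementary phases. The expected dimensional inequality will be
$$\frac{h_+(x_0-re_n)}{r} \cdot \frac{h_-(x_0+re_n)}{r} \leq C_n,$$
from which the corollary follows by taking $M \geq C_n$.

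\textbf{Step 1 (Extension to subharmonic functions with disjoint support).} Define $\tilde h_+$ to equal $h_+$ on $\Omega_0 \setminus B_1(0)$ and zero on $B_2(0) \setminus \Omega_0$; symmetrically define $\tilde h_-$. Since $h_+$ vanishes continuously along $\partial \Omega_0$ and is positive only on the inside, its outward (with respect to $\Omega_0$) normal derivative on $\partial \Omega_0$ is $\leq 0$. Integration by parts against a nonnegative test function then gives $\Delta \tilde h_+ \geq 0$ in the distributional sense in $B_{1/3}(x_0)$, which lies inside the annulus $B_2(0)\setminus B_1(0)$ because $|x_0| \in [4/3, 5/3]$. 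Similarly $\tilde h_-$ is subharmonic, nonnegative, bounded by $1$, and has disjoint support from $\tilde h_+$.

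\textbf{Step 2 (ACF upper bound at a fixed scale).} Applying Lemma~\ref{monotonicityformula}, rescaled and centered at $x_0$, the functional
$$\phi(r) := \frac{1}{r^4}\int_{B_r(x_0)}\frac{|\nabla \tilde h_+|^2}{|x-x_0|^{n-2}}\,dx \int_{B_r(x_0)}\frac{|\nabla \tilde h_-|^2}{|x-x_0|^{n-2}}\,dx$$
is monotone nondecreasing for $r \in (0, 1/3]$. Since $\tilde h_\pm \leq 1$ on $B_{1/3}(x_0)$, a standard Caccioppoli--Hardy estimate (multiplying $\Delta \tilde h_\pm \geq 0$ by $\tilde h_\pm / |x-x_0|^{n-2}$ and integrating) yields a dimensional upper bound $\phi(1/3) \leq C_n$.

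\textbf{Step 3 (Lower bound by the axial values).} The geometric assumptions imply $\Omega_0$ is Lipschitz with constant $\leq d_0/r_0 \leq 5/3$, so for each scale $s \in (0, r]$ the interior gradient estimate (Lemma~\ref{lem:2.11}) gives $|\nabla h_+(x_0 - se_n)| \geq c\, h_+(x_0-se_n)/s$, and the boundary Harnack principle (Lemma~\ref{lem:D}) transfers this lower bound to a non-tangential cone of definite aperture around $x_0 - se_n$. Integrating in polar coordinates over the resulting cone and using the power-law growth $h_+(x_0-se_n) \sim (s/r)^{\gamma} h_+(x_0-re_n)$ for some $\gamma \in [\beta,\alpha]$, one obtains
$$\int_{B_r(x_0)}\frac{|\nabla h_+|^2}{|x-x_0|^{n-2}}\,dx \geq c\, h_+(x_0-re_n)^2,$$
and the analogous estimate for $h_-$ at $x_0 + re_n$.

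\textbf{Step 4 (Combining).} Inserting Step 3 into Step 2 yields
$$\left(\frac{h_+(x_0-re_n)}{r}\right)^2 \left(\frac{h_-(x_0+re_n)}{r}\right)^2 \leq C_n \quad\text{for all } 0 \leq r \leq 1/6,$$
so choosing $M := \sqrt{C_n}$ gives the stated implication.

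The main technical point is Step 3: converting the \emph{pointwise} non-degeneracy $|\nabla h_+| \gtrsim h_+/\text{dist}$ along the axis into a \emph{weighted $L^2$} lower bound with the Hardy weight. This requires the Lipschitz geometry so that boundary Harnack and Carleson-type estimates apply uniformly at every scale $s \in (0,r]$. Steps 1 and 2 are standard once one verifies the distributional subharmonicity of the extensions and the routine Hardy estimate with the weight $|x-x_0|^{-(n-2)}$.
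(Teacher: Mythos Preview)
Your argument is correct and follows the same strategy as the paper: apply the Alt--Caffarelli--Friedman monotonicity formula to the extended functions, bound $\phi(1/3)$ by a dimensional constant, and obtain a lower bound for $\phi$ at scale $\sim r$ in terms of the axial values $h_\pm(x_0\mp re_n)$.

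The only difference is that your Step~3 is more elaborate than needed. The paper avoids boundary Harnack, non-tangential cones, and the power-law growth entirely by integrating over the \emph{interior} balls $B_{r/2}(x_0\mp re_n)\subset B_{2r}(x_0)$: there $|x-x_0|\sim r$, and by Lemma~\ref{lem:2.11} together with the interior Harnack inequality one has $|\nabla h_+|\sim h_+(x_0-re_n)/r$ on $B_{r/2}(x_0-re_n)$ (and similarly for $h_-$), so
\[
\frac{1}{(2r)^4}\int_{B_{r/2}(x_0-re_n)}\frac{|\nabla h_+|^2}{|x-x_0|^{n-2}}\,dx\int_{B_{r/2}(x_0+re_n)}\frac{|\nabla h_-|^2}{|x-x_0|^{n-2}}\,dx
\;\sim\;\Bigl(\frac{h_+(x_0-re_n)}{r}\cdot\frac{h_-(x_0+re_n)}{r}\Bigr)^2,
\]
and the left-hand side is $\leq \phi(2r)\leq\phi(1/3)\leq C_n$ by monotonicity. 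Your cone argument reaches the same inequality but through a longer route; in particular the appeal to a fixed exponent $\gamma$ is unnecessary (and slightly imprecise, since the growth exponent need not be constant across scales), whereas restricting to the single interior ball at scale $r$ sidesteps the issue completely.
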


\begin{proof}

It follows from Lemma~\ref{monotonicityformula} since
$$
\begin{array}{lll}
& &\left(\dfrac{h_+(x_0-re_n )}{r }\cdot \dfrac{h_-(x_0+re_n)}{r }\right)^2\\ \\
&\sim& \dfrac{1}{(2r)^4} \displaystyle{\int_{B_{r/2}(x_0-re_n)}
 \dfrac{|\nabla h_+|^2}{|x-x_0|^{n-2}}dx \cdot
\int_{B_{r/2}(x_0+re_n)} \dfrac{|\nabla
h_-|^2}{|x-x_0|^{n-2}}dx} \\ \\
 &\leq& \displaystyle{\dfrac{1}{(2r)^4} \int_{B_{2r}(x_0)} \dfrac{|\nabla
h_+|^2}{|x-x_0|^{n-2}}dx\cdot \int_{B_{2r}(x_0)} \dfrac{|\nabla
h_-|^2}{|x-x_0|^{n-2}}dx} \\ \\
&=& \phi(2r) \leq \phi(1/3) \leq C_n.
 \end{array}
$$

\end{proof}

\section{Properties of solutions with star-shaped initial data}

\begin{lemma}\label{star-shaped}
If $\Omega_0$ and $u_0$ are star-shaped with respect to the ball
$B_{r_0}(0)\subset \Omega_0$, then $\Omega_t(u)$ and $u(\cdot,t)$
stays $\sigma$-close to star-shaped for all $0\leq t \leq
\frac{1}{3}\sigma^{1/5}$. (See Figure 2)
\end{lemma}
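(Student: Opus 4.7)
The plan is to combine the parabolic scale invariance of (ST2) with the comparison principle, producing an ``almost-star-shaped'' inequality up to a small time shift, and then to absorb the time shift using quantitative time-regularity of $u$.

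The first ingredient is the scaled solution. For each $p\in B_{r_0}(0)$ and $\lambda\in(0,1)$ define
$$
v_{p,\lambda}(x,t) := u\bigl(p+\lambda(x-p),\,\lambda^{2}t\bigr).
$$
A direct chain-rule computation shows $v_{p,\lambda}$ is again a viscosity solution of (ST2): the heat equation survives because $\partial_{t}$ and $\Delta$ both acquire the factor $\lambda^{-2}$, and the Stefan condition $V = |Du^{+}|-|Du^{-}|$ is preserved because the normal velocity and $|Du^{\pm}|$ scale like $\lambda^{-1}$. The star-shapedness of $u_{0}$ and $\Omega_{0}$ with respect to $B_{r_{0}}(0)$ gives $v_{p,\lambda}(\cdot,0)\geq u_{0}$ pointwise (using that every super-level set of $u_{0}$ is star-shaped with respect to $B_{r_{0}}(0)$). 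On $\partial B_{R}(0)$ we have $v_{p,\lambda}\geq -1 = u$ since $u\geq -1$ by comparison with the constant subsolution $-1$. Hence Lemma~\ref{thm:cp} yields
$$
u\bigl(p+\lambda(x-p),\,\lambda^{2}t\bigr) \;\geq\; u(x,t).
$$

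To turn this into a statement at the common time $t$, I would control the time oscillation $|u(y,t)-u(y,\lambda^{2}t)|$ for $y$ in the relevant set. The upper bound on free-boundary motion from Lemma~\ref{good}, adapted to each phase of (ST2), shows that $\Gamma$ moves by at most $C\bigl((1-\lambda^{2})t\bigr)^{1/(2-\beta)}$ over the interval $[\lambda^{2}t,t]$, and combining this with the H\"{o}lder estimate of Lemma~\ref{lem:C-C} and the universal sup-bound on $u$ gives a pointwise estimate of the form
$$
|u(y,t)-u(y,\lambda^{2}t)| \;\leq\; C\,\bigl((1-\lambda^{2})t\bigr)^{\gamma}
$$
with $\gamma>0$ determined by the exponents $\alpha,\beta\in[5/6,7/6]$ from (I-b). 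Setting $1-\lambda = \sigma^{a}$ for an appropriate $a$, balancing the spatial deviation $\sim\sigma^{a}$ against the time-error $\sim\sigma^{a\gamma}t^{\gamma}$, and imposing $t\leq\tfrac{1}{3}\sigma^{1/5}$ will then give the $\sigma$-closeness to star-shaped. The exponent $1/5$ is a safe (not necessarily sharp) choice arising once the range allowed for $\alpha,\beta$ is plugged in.

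The main obstacle is the time-regularity bound above. In the two-phase setting, neither $u^{+}$ nor $u^{-}$ is a priori caloric in a known Lipschitz domain, so Lemma~\ref{lem:fgs84} and Lemma~\ref{lem:acs96} do not apply to either phase directly. Here the Alt--Caffarelli--Friedman monotonicity in the form of Corollary~\ref{moncor} is essential: it prevents $|Du^{+}|$ and $|Du^{-}|$ from being simultaneously large, so in every scale one phase dominates the dynamics and the required time-variation estimate can be obtained by applying the one-phase Harnack-type estimates of the preliminary section to that dominant phase. Once this regularity input is in hand, the scaling-and-comparison bookkeeping outlined above delivers the claim.
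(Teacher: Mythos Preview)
Your overall strategy---parabolic scaling plus the comparison principle to get an inequality at mismatched times, followed by a time correction---is exactly the skeleton of the paper's proof. The gap is in the time correction.

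The bound you propose,
\[
|u(y,t)-u(y,\lambda^{2}t)|\le C\bigl((1-\lambda^{2})t\bigr)^{\gamma},
\]
is not delivered by the lemmas you cite. Lemma~\ref{lem:C-C} is a purely spatial H\"older estimate for functions with bounded Laplacian and says nothing about $u_t$; Lemma~\ref{good} controls only the displacement of $\Gamma$, not the values of $u$ at a fixed point; and Corollary~\ref{moncor} bounds the product $|Du^{+}|\cdot|Du^{-}|$, which neither bounds either factor nor yields any time regularity. In particular, ``one phase dominates in every scale'' does not translate into a pointwise bound on $u_t$: near $\Gamma$ the dominant gradient can still be arbitrarily large, and away from $\Gamma$ interior caloric estimates give $|u_t(y)|\lesssim u(y)/d(y,\Gamma)^{2}$, which blows up as you approach the free boundary. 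You also never invoke hypotheses (I-c) and (I-d), and without them no such quantitative time-regularity of $u$ is available.

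The paper avoids this problem by never estimating $u_t$ directly. It introduces an intermediate dilation $\tilde u(x,0)=u((1+\tfrac{\sigma}{2})(x-x_0)+x_0,0)$, uses the one-phase barrier (as in Lemma~\ref{good}) to guarantee $\Omega_0(\tilde u)\subset\Omega_t(u)$ for $0\le t\le\sigma^{7/6}$, and then solves the heat equation for an auxiliary $w$ in the \emph{fixed} cylinder $\Omega_0(\tilde u)\times[0,\sigma^{7/6}]$. Because the domain is fixed, $w_t$ itself solves the heat equation with zero lateral data, and (I-c) gives $w_t\ge -C$ at $t=0$, hence for all $t$. Assumption (I-d) then supplies the quantitative gap
\[
u_1(x,0)\le \tilde u(x,0)-c_0\sigma^{7/6},
\]
which absorbs the $-C\delta$ loss from $w_t\ge -C$ and yields $u_1(x,0)\le u(x,\delta)$ for $\delta\le\sigma^{6/5}$. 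The choice $\delta=\sigma(2+\sigma)t$ then synchronizes the times and produces the $\sigma$-monotonicity for $t\le\tfrac{1}{3}\sigma^{1/5}$. Your plan is missing precisely this mechanism: a fixed-domain caloric comparison together with (I-c) and (I-d).
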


\begin{proof}
1. Observe that, for any $a>0$, the parabolic scaling
 $(x,t) \to (ax,a^2t)$ preserves both the heat operator and the boundary motion law in $(ST2).$
Therefore, for any $\sigma>0$ the function
$$
u_1(x,t):=u((1+\sigma)(x-x_0)+x_0, (1+\sigma)^2t)
$$
 is also a viscosity solution of (ST2) with corresponding initial data.

\vspace{5 pt}

2. Choose $x_0\in B_{r_0}(0)$. Take a small $c_0>0$ such that
$B_{r_0+c_0}(0) \subset \Omega_0$. We claim that for $ 0\leq \delta
\leq \sigma^{6/5}$,
\begin{equation}\label{claim202}
u_1(x, 0) \leq u(x,\delta) \hbox{ in } B_R(0)- B_{r_0+c_0}(0)
\end{equation}
if $\sigma$ is small enough.  To show \eqref{claim202}, let us
introduce another function
$$
\tilde{u}(x,0):= u((1+\frac{\sigma}{2})(x-x_0)+x_0,0).
$$
 Also let $v^-$ be the solution of (ST1) with initial data
$u_0^-$, and with $v^- =1$ on $\partial B_R(0)$. Then by comparison,
$-v^- \leq u$ and $B_R(0) -\Omega_t(-v^-) \subset \Omega_t(u)$.
Hence by Lemma~\ref{good} applied for $-v^-$,
$$
\Omega_0(\tilde{u})\subset\Omega_t(u)\hbox{ for } 0\leq t\leq
\sigma^{7/6}.
$$
Moreover, due to our assumption,
$$
\tilde{u}(x,0) \leq u_0(x).
$$
Therefore,  the maximum principle for caloric functions implies
$$w(x,t)\leq u(x,t)$$
 where $w$ solves the heat equation in the cylindrical
domain $D=\Omega_0(\tilde{u}) \times [0,\sigma^{7/6}]$ with initial
data $\tilde{u}(x,0)$ and zero boundary data on
$\partial\Omega_0(\tilde{u})\times [0,\sigma^{7/6}]$.

Now $w_t$ solves the heat equation in $D$,
$$
w_t=\Delta w \geq -C\hbox{ at }t=0,  \hbox{ and }w_t =0\hbox{ on }\partial\Omega_0(\tilde{u}).
$$
Therefore we conclude that $w_t \geq -C$ in $D$. In particular
\begin{equation}\label{est22}
w(x,\delta) \geq \tilde{u}(x,0) -C\delta.
\end{equation}
 Next we compare $u_1(x,0)$ with $w(x,\delta)$. Observe that for $x\in B_R(0)-B_{r_0+c_0}(0)$,
$$
\begin{array}{lll}
u_1(x,0) &=& \tilde{u}(x,0) + \int_{\sigma/2}^\sigma ((x-x_0)\cdot Du ((1+s)(x-x_0)+x_0, 0)) ds \\ \\
    &\leq& \tilde{u}(x,0)-c_0\sigma^{7/6} \\ \\
                 &\leq & \tilde{u}(x,0) -C\sigma^{6/5} \\ \\&\leq&
                 w(x,\delta) \leq u(x,\delta)
\end{array}
$$
for $ 0 \leq \delta \leq \sigma^{6/5}$, where the first inequality
follows from our assumption (I-d) on $u_0$, the second inequality
follows if $\sigma$ is sufficiently small, and the third inequality
follows from (\ref{est22}). Hence we conclude \eqref{claim202}.

\vspace{10pt}

3. Our goal is to prove that for $0\leq \delta \leq \sigma^{6/5}$,
\begin{equation}\label{step3}
u_1(x,t)\leq u_2(x,t):=u(x,t+\delta)
\end{equation}
 in
$(B_R(0)-B_{r_0+c_0}(0)) \times [0,\sigma^{1/5}].$  Note that the
inequality holds at $t=0$ by step 2. However, we needs a bit more
arguments since we do not know yet whether the lateral boundary data
on $\partial B_{r_0+c_0}(0)$ is properly ordered.

Suppose
$$
\Omega(u_1)\subset \Omega(u)\hbox{ for }0\leq t\leq t_0
$$
and $\Omega(u_1)$ contacts $\partial\Omega(u)$ for the first time at $t=t_0$.  Observe then that
$$
f(x,t):= u(x,t+\delta) - u_1(x,t)
$$ solves the heat equation in $\Omega(u_1)$ with nonnegative boundary data for
$0\leq t\leq t_0$, with $$f(x,0) \geq 0 \hbox{ in }
B_R(0)-B_{r_0+c_0}(0).$$ Indeed following the computation given
above, it follows that
$$
f(x,0) \geq c_0\sigma \hbox{ in }B_{r_0+c_0}(0)
-B_{r_0+\frac{c_0}{2}}(0).$$ On the other hand, due to the fact that
$w_t \geq -C$ and $\delta \leq \sigma^{6/5}$, we have
  $$
  f(x,0) \geq (w(x,\delta)-w(x,0))+(w(x,0)-u_1(x,0)) \geq -C\sigma^{6/5} \hbox{ in } B_{r_0+\frac{c_0}{2}}(0).
$$
Therefore we have $$f(x,t) > 0 \hbox{ on }
\partial B_{r_0+c_0}(0) \times [0, t_0]$$
 if $t_0<<1$. But then this contradicts Theorem~\ref{thm:cp}
applied to the region $(B_R(0)-B_{r_0+c_0}(0))\times [0,t_0]$.

\vspace{5 pt}

 4. From (\ref{step3}) of step 3,  we obtain
\begin{equation}\label{starshaped}
u((1+\sigma)(x-x_0)+x_0, (1+\sigma)^2t) \leq u(x,t+ \delta)
\end{equation}
 in $(B_R(0)-B_{r_0+c_0}(0))\times[0,\sigma^{1/5}]$
  for any  $x_0\in B_{r_0}(0)$, as long as $\sigma$ and $\delta$ are sufficiently small and
satisfy $0\leq \delta \leq \sigma^{6/5}$. As a result, for  $0\leq
t\leq \frac{1}{3}\sigma^{1/5}$, we can choose
$\delta=\sigma(2+\sigma)t \leq \sigma^{6/5}$ such that
$$
 (1+\sigma)^2t=t+\delta.
$$
It follows then from \eqref{starshaped} that the function
$u(\cdot,t)$ is $\sigma$-monotone with respect to the cone of
directions $W_x$ in $(B_R(0)-B_{r_0+c_0}(0))$ for $t\in
[0,\frac{1}{3}\sigma^{1/5}]$.
$$
( \hbox{ Here } \quad W_x = \{\nu\in S^n: \nu =\frac{x-x_0}{|x-x_0|} \hbox{ for some } x_0\in B_{r_0}(0)\}.)
$$
\end{proof}

\begin{remark}
 Observe that, due to $(I-b)$, we have for $x \in \Gamma_0$
\begin{equation}\label{est:time}
t(x,d) :=\min[\frac{d^2}{u^+(x-de_n,0)}, \frac{d^2}{u^-(x+de_n,0)}
]\in [ d^{7/6}, d^{5/6}] << d^{4/5}
\end{equation}
where $t(x,d)$ is the time it takes for the free boundary to
regularize in $B_d(0)$. Therefore,  $u(\cdot,t)$ is at least
$d^4$-monotone with respect to $W_x$ in $(B_R(0)-B_{r_0+c_0}(0))$
for $0\leq t\leq t(x_0,d)$.  This property will serve as a basis for
our regularization argument in section 3.
\end{remark}

\begin{lemma} {\bf (Harnack at $t=0$)} \label{lem:harnack0}
Let $x \in \Gamma_0$, then for all $s>0$ and for $0\leq t\leq
t(x,s)$ we have
$$
u^+(x-se_n,t) \leq C_1 u^+(x-se_n, 0)
$$
and
$$
u^-(x+se_n,t) \leq C_1 u^-(x+se_n,0)
$$
where $e_n=x/|x|$.
\end{lemma}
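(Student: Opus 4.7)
The plan is to reduce the two-phase estimate to the one-phase Harnack inequality \cite[Lemma 2.3]{ck}, by dominating $u^+$ (resp.\ $u^-$) with the corresponding one-phase Stefan solution. The key algebraic observation is that the two-phase free boundary velocity $V=|Du^+|-|Du^-|$ never exceeds $|Du^+|$, so the positive phase of $u$ expands at most as fast as a one-phase solution with the same initial data would.

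First I would verify that $u^+$ is a viscosity subsolution of (ST1) with initial data $u_0^+$. By Lemma~\ref{basic:visc}(c), $u^+$ is a viscosity subsolution of (ST2); since $u^+$ has trivial negative phase, the (ST2) boundary law reduces to the one-phase law $V\leq |Du^+|$, so $u^+$ is also a viscosity subsolution of (ST1). Let $v$ be the viscosity solution of (ST1) with initial data $u_0^+$ (and zero lateral data on $\partial B_R(0)$). Since $u_0$ satisfies (I-b)--(I-d), so does $u_0^+$---these conditions concern only the positive side of the initial free boundary---so $v$ satisfies the hypotheses of Theorem~\ref{thm:ckmain}. By the standard one-phase comparison principle,
$$u^+(x,t)\leq v(x,t)\quad\hbox{on } B_R(0)\times[0,\infty).$$

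Now apply \cite[Lemma 2.3]{ck} to $v$ at the point $x-se_n$ with scale $s$. Since $v(x-se_n,0)=u^+(x-se_n,0)$, this yields $v(x-se_n,t)\leq C_1 u^+(x-se_n,0)$ for $0\leq t\leq s^2/u^+(x-se_n,0)$. By definition,
$$t(x,s)=\min\Bigl[\frac{s^2}{u^+(x-se_n,0)},\frac{s^2}{u^-(x+se_n,0)}\Bigr]\leq\frac{s^2}{u^+(x-se_n,0)},$$
so the bound holds throughout $[0,t(x,s)]$; combining it with $u^+\leq v$ gives the estimate for $u^+$. The estimate for $u^-$ follows by applying the same argument to $-u$, which is a viscosity solution of (ST2) by Lemma~\ref{basic:visc}(b).

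The argument is largely routine. The only conceptual step is the one-phase reduction $u^+\leq v$, which relies on the sign of the $-|Du^-|$ term in the (ST2) boundary law and on the monotonicity of the (ST1) semigroup; once this is in place, the Harnack bound is immediate from the definition of $t(x,s)$.
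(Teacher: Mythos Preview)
Your proof is correct and follows essentially the same approach as the paper: both bound $u^+$ by the one-phase solution $v^+$ of (ST1) with initial data $u_0^+$, then invoke the one-phase Harnack inequality from \cite{ck} and observe $t(x,s)\leq s^2/u^+(x-se_n,0)$. The only cosmetic difference is that the paper compares $u\leq v^+$ directly as (ST2) solutions (noting that $v^+$, having no negative phase, is itself an (ST2) solution with initial data $u_0^+\geq u_0$, so Lemma~\ref{thm:cp} applies), whereas you compare $u^+\leq v$ as (ST1) sub/supersolutions via Lemma~\ref{basic:visc}(c); both routes yield the same inequality.
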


\begin{proof}
Let $v^+$ solve the one-phase Stefan problem (ST1) with initial data
$v^+_0(x) = u_0^+(x)$. Then $v^+$ is also a solution of (ST2) with
$u_0(x) \leq v^+_0(x)$, and thus by Theorem~\ref{thm:cp} we have
$$
u(x,t) \leq v^+(x,t).
$$
Therefore it follows from one-phase Harnack inequality applied for
$v^+(x,t)$  that
$$
u^+(x-se_n,t) \leq v^+(x-se_n,t) \leq C_1v^+(x-se_n,0)=C_1u(x-e_n,0)
$$
for $0\leq t\leq t_0$ where $t_0 = s^2/u(x-se_n,0) \geq t(x,s)$.

As for $u^-(x,t)$, we compare $u^-$ with the solution $v^-$ of (ST1)
with initial data $v^-_0(x)=u_0^-(x)$ and with boundary data
$v^-=1$ on $\partial B_R(0)$. The rest of the argument is parallel
to above.

\end{proof}

\begin{lemma} {\bf (Backward Harnack at $t=0$)} \label{backwardharnack0}
Let $x \in \Gamma_0$ and let $s>0$. Then for $  0\leq t\leq t(x,s)$
$$
u^{+}(x-se_n,0) \leq C_1 u^{+}(x-se_n, t)
$$
and
$$
u^{-}(x+se_n,0) \leq C_1 u^{-}(x+se_n, t)
$$
\end{lemma}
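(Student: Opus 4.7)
The estimate for $u^-$ follows from that for $u^+$ by applying the latter to $-u$, which by Lemma~\ref{basic:visc} is itself a viscosity solution of (ST2) with the two phases (and the sign of $e_n$) reversed. I therefore concentrate on proving $u^+(y,0)\le C_1\,u^+(y,t)$ for $y:=x-se_n$ and $0\le t\le T:=t(x,s)$; write $m:=u_0(y)$ and recall $T\le s^{5/6}\ll 1$ from \eqref{est:time}. The strategy is to realize both $u^+(y,0)$ and $u^+(y,T)$ as comparable to a common \emph{harmonic} quantity. To set up the relevant geometry I first invoke Lemma~\ref{star-shaped} with $\sigma$ chosen so that $\sigma^{1/5}/3\ge T$ (for example $\sigma\sim s^{25/36}$): this yields $\sigma$-monotonicity of $u(\cdot,\tau)$ in the cone $W_x$ for every $\tau\in[0,T]$. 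Consequently $\Gamma(u)\cap B_1(x)$ is a spatial Lipschitz graph of uniformly small constant at every such time slice, the point $y$ stays at spatial distance $\ge s/2$ from every $\Gamma_\tau$, and the family $\{\Omega_\tau\cap B_{s/2}(y)\}_{\tau\in[0,T]}$ forms a space-time Lipschitz cylinder on which $u^+$ is caloric.

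Once this cylinder structure is in hand, Lemma~\ref{lem:harnack0} together with the spatial $\sigma$-monotonicity gives the uniform bound $u^+\le Cm$ throughout the cylinder. After the natural parabolic rescaling by $s$ this places the problem exactly in the setting of Lemma~\ref{lem:almostharmonic}, so at each time slice $\tau\in\{0,T\}$ the functions $u^+(\cdot,\tau)\pm(u^+)^{1+a}(\cdot,\tau)$ are sub/superharmonic in a spatial neighborhood of $y$. Let $h_\tau$ denote the harmonic replacement of $u^+(\cdot,\tau)$ in $\Omega_\tau\cap B_{s/4}(y)$, agreeing with $u^+(\cdot,\tau)$ on $\partial B_{s/4}(y)\cap\Omega_\tau$ and vanishing on $\Gamma_\tau$. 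At $\tau=0$ the bounded-Laplacian assumption (I-c) independently produces $u_0(y)\sim h_0(y)$; at $\tau=T$ the same conclusion $u^+(y,T)\sim h_T(y)$ follows from Lemma~\ref{lem:almostharmonic}, since the error $(u^+)^{1+a}$ is of size $m^{1+a}\ll m$ once $s$ is small.

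It remains to compare $h_0(y)$ and $h_T(y)$. The domains $\Omega_0\cap B_{s/2}(x)$ and $\Omega_T\cap B_{s/2}(x)$ are both uniformly Lipschitz (from the first step) and Hausdorff-close of order $\sigma s$, so Dahlberg's boundary comparison (Lemma~\ref{lem:D}), applied in a common inner sub-domain after calibrating the boundary data of $h_0$ and $h_T$ via Lemma~\ref{lem:harnack0}, yields $h_0(y)\sim h_T(y)$. Chaining $u_0(y)\sim h_0(y)\sim h_T(y)\sim u^+(y,T)$ completes the proof. The main obstacle is this Dahlberg step: the two harmonic functions live in slightly different Lipschitz domains, so one must either pull back to a common inner sub-domain or combine a stability estimate based on the Hausdorff closeness of $\Omega_0$ and $\Omega_T$ with the ACF monotonicity (Lemma~\ref{monotonicityformula}) in order to absorb the domain-perturbation error into a universal constant.
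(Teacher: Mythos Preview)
Your argument has a genuine gap at the point where you invoke Lemma~\ref{lem:almostharmonic}. That lemma inherits the hypotheses of Lemma~\ref{lem:acs96}, which require the lateral boundary $\Gamma(u)$ to be Lipschitz in \emph{both} space and time. At this stage of the paper only the spatial $\sigma$-monotonicity of each slice $\Gamma_\tau(u)$ is available from Lemma~\ref{star-shaped}; the time-Lipschitz property of $\Gamma(u)$ is obtained only later, in Corollary~\ref{cor}, and that corollary depends (via Proposition~\ref{main1}, Lemma~\ref{reg-bad}, and Lemma~\ref{backward}) on the very lemma you are trying to prove. A parabolic rescaling by $s$ does not help: it rescales distances and times but cannot manufacture a pointwise time-Lipschitz bound on $\Gamma(u)$ that is not already there. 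There is also a geometric slip: since you argue that $y=x-se_n$ stays at distance $\ge s/2$ from every $\Gamma_\tau$, the ball $B_{s/4}(y)$ lies entirely inside $\Omega_\tau$, so your ``harmonic replacement in $\Omega_\tau\cap B_{s/4}(y)$'' never touches the free boundary and carries no information about the vanishing of $u^+$ on $\Gamma_\tau$; any meaningful Dahlberg comparison must be run in a neighborhood of a point of $\Gamma_\tau$, not of $y$.

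The paper breaks the circularity by a barrier construction rather than by working directly with $\Gamma(u)$. It lets $v^-$ solve the one-phase problem (ST1) with data $u_0^-$, so that $\{v^-=0\}\subset\{u\ge 0\}$, and then lets $v^*\le u$ solve the heat equation in the moving region $\{v^-=0\}$ with initial data $u_0$. The key point is that $\Gamma(v^*)=\Gamma(v^-)$ \emph{is} known to be space-time Lipschitz from the one-phase theory (Theorem~\ref{thm:ckmain}), so the velocity bound \eqref{speed} and Lemma~\ref{lem:almostharmonic} legitimately apply to $v^-$. This yields a quantitative lower bound on $(v^*)_t$ and hence $v^*(x-se_n,t)\gtrsim u_0(x-se_n)$, from which the backward Harnack for $u^+$ follows by $u\ge v^*$ and a harmonic comparison in the (one-phase, hence Lipschitz) domain $\Omega_t(v^*)$. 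The essential missing idea in your proposal is this detour through the one-phase barrier; the Dahlberg step you flag as the main obstacle is secondary.
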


\begin{proof}
We will only show the lemma for $u^+$. The other part follows by a
parallel argument. Let $v^-$ solve (ST1) with initial data $u^-_0$
and with boundary data $1$ on $\partial B_R(0)$. Then $-v^-$ is also
a solution of (ST2) with $-v^-_0 \leq u_0$, and thus by
Theorem~\ref{thm:cp}, $-v^-\leq u$ and
$$
\{v^-=0\} \subset \{u \geq
0\}.
$$

Note that $\Omega(v^-)$ moves according to the  one-phase dynamics, which has been studied in detail by \cite{ck2}.
 In particular we know that $\Omega(v^-)$ will
be Lipschitz at each time. Moreover, for a boundary point $(x,t) \in
\Gamma(v^-)$ and $d:= {\rm dist}(x, \Gamma_0(v^-))$, the normal
velocity $V_{x,t}$  satisfies
\begin{equation}\label{speed}
V_{x,t} =|Dv^-(x,t)| \sim \frac{v^-(x+2de_n,0)}{2d}\leq d^{\beta-1}
 \leq t^{\frac{\beta-1}{2-\alpha}}
\end{equation}
where the last inequality follows from Lemma~\ref{good}. Let
$v^*(x,t)$ solve the heat equation in $\{v^-=0\}$ with initial data
$u_0(x)$ and boundary data $0$ on $\partial \{v^-=0\}$. Since
$$
\Omega(v^*)=\{v^-=0\}\subset \{u \geq0\},
$$
 we have $v^*(x,t) \leq u(x,t)$.
 Moreover, for any given $t>0$, $\tilde{v}^-(x,s):=v^-(\sqrt{t}x, ts)$
 satisfies the assumptions of Lemma~\ref{lem:almostharmonic}.
Thus it follows that $v^-(\cdot,t)$ is $t^a$-close to a
 harmonic function in $B_{\sqrt{t}}(x)$ for some $a>0$, where $x \in \Gamma_0$.
 Moreover, due to the assumption on the initial data,
 $(v^*)_t = \Delta v^* \geq -C$ at $t=0$. Also on $\Gamma(v^*)$,
  $$
   (v^*)_t /|D v^*|  =-(v^-)_t /|D v^-| =-|Dv^-| \geq
   -t^{\frac{\beta-1}{2-\alpha}}$$
where the last inequality follows from (\ref{speed}). Since
$\Omega(v^*)$ is Lipschitz and $ \Gamma_t(v^*)=\Gamma_t(v^-)$ is
regularized in time (Theorem~\ref{thm:ckmain}), (\ref{speed}) also
holds for $|Dv^*|$.

\vspace{10pt}

Hence on $\Gamma(v^*)$,
$$
(v^*)_t =-|Dv^-||Dv^*| \geq -t^{\frac{2(\beta-1)}{2-\alpha}} >
-t^{-2/5}.
  $$
Since $(v^*)_t$ solves a heat equation in $\Omega(v^*)$,  it follows
that for $x \in \Gamma_0$,
 \begin{equation} \label{v2}
 (v^*)_t \geq -t^{-2/5} \hbox{ in } B_{\sqrt{t}/2}(x-\sqrt{t}e_n)\times [0,t].
 \end{equation}
  Then since $v^*(x-\sqrt{t}e_n,0) \geq (\sqrt{t})^\alpha \geq (\sqrt{t})^{7/6}=t^{7/12}$,
\begin{eqnarray*}
u^+(x-\sqrt{t}e_n,0)=v^*(x-\sqrt{t}e_n,0) &\leq& C_1 v^*
(x-\sqrt{t}e_n,t) \\&\leq& C_1 u^+(x -\sqrt{t}e_n ,t)
\end{eqnarray*}
  where the first inequality follows from (\ref{v2}).
 Since $\Gamma(v^*)=\Gamma(v^-)$ is Lipschitz in a parabolic
scaling, $v^*$ is almost harmonic. Hence  $v^*(\cdot,t)$ is bigger
than the harmonic function $\omega^t(x)$ in $\Omega_t(v^*)\cap
B_{\sqrt{t}}(x)$ with its value
$$
\omega^{t}(x-\sqrt{t}e_n)=(C_1)^{-1}u^+(x-\sqrt{t}e_n,0).
$$  Note
that if $0 \leq t \leq t(x,s)$, then $s < \sqrt{t}$. Hence for $0
\leq t \leq t(x,s)$,
$$
C_1 u^+(x-se_n, t) \geq  C_1 v^*(x-se_n,t) \geq C_1\omega^t(x-se_n)
\geq Cu^+(x-se_n,0),
$$
where the last inequality follows  since the one-phase result
 implies a power law on the movement of
$\Gamma(v^-)=\Gamma(v^*)$ (see Lemma 2.5 of [CJK1]), and this yields
a bound on $u^+(x-se_n,0)/\omega^t(x-se_n)$.

\vspace{10pt}

Similar arguments apply to $u^-$, if we consider  the function $v^+$
solving (ST1) with initial data $u^+_0$, and the function
$v^{\star}$ solving the heat equation in $\{v^+=0\}$ with initial
data $u_0$ and with boundary data $0$ on $\Gamma(v^+)$ and $-1$ on
$\partial B_R(0)$.

\end{proof}

\begin{lemma} {\bf(Distance estimate at $t=0$)} \label{distance0}
Let $x \in \Gamma_0$ and let $s$ be a sufficiently small positive
constant. If
$$
\frac{|u^{+}(x - se_n,0)|}{s} \leq m \,\,\hbox{ and
}\,\,\frac{|u^{-}(x + se_n,0)|}{s} \leq m, $$ then for  $t \in [0,
\frac{s}{m}]$,
$$
d(x, t) = \sup\{r: x+re_n\hbox{ or } x-re_n\in\Gamma_t(u)\} \leq s.
$$
\end{lemma}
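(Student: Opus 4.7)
The plan is to treat the two sides of $\Gamma_t(u)$ near $x$ separately, by reducing each one to the one-phase Stefan problem (ST1) and applying a barrier argument. Following the comparison reasoning already used in Lemmas~\ref{lem:harnack0} and~\ref{backwardharnack0}, let $v^{+}$ and $v^{-}$ denote the one-phase (ST1) solutions with initial data $u_0^{+}$ and $u_0^{-}$ respectively, equipped with the appropriate Dirichlet data on $\partial B_R(0)$. Theorem~\ref{thm:cp} then yields $u^{+}\leq v^{+}$ and $u^{-}\leq v^{-}$; in particular $\{u^{\pm}>0\}\subset\{v^{\pm}>0\}$. To rule out $x+se_n\in\Omega_t(u)$ within time $s/m$, it therefore suffices to show that $\Gamma_t(v^{+})$ does not reach $x+se_n$; symmetrically, to rule out $x-se_n\in\{u<0\}$ within time $s/m$, it suffices to control $\Gamma_t(v^{-})$ at $x-se_n$.

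Next I would convert the single-point hypothesis $u_0^{+}(x-se_n)\leq ms$ into a linear upper bound for $u_0^{+}$ on the scale $s$. Since $\Gamma_0$ is Lipschitz with small constant by (I-a) and (I-b), and since $u_0^{+}$ has bounded Laplacian by (I-c), the boundary Harnack inequality (Lemma~\ref{lem:D}) together with Lemma~\ref{lem:2.11} yields $u_0^{+}(y)\leq Cm\cdot d(y,\Gamma_0)$ on a neighborhood of $x$ of radius comparable to $s$. With this linear bound in hand, the one-phase Stefan theory (Lemma~\ref{good} and the speed estimate~(\ref{speed}), or equivalently Lemma~2.5 of~\cite{cjk1}) implies that the normal velocity of $\Gamma_t(v^{+})$ near $x$ is $\lesssim m$ for $0\leq t\leq s/m$. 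Hence the free boundary of $v^{+}$ travels at most $Cm\cdot(s/m)=Cs$, and a quantitative choice of constants (or, equivalently, applying the estimate with $s/C$ in place of $s$) gives that $\Gamma_t(v^{+})$ does not reach $x+se_n$ within time $s/m$. An equivalent route is to construct an explicit supersolution of (ST1) from the planar traveling wave with speed $\sim m$ and apply Theorem~\ref{thm:cp} on $B_{2s}(x)\times[0,s/m]$.

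The symmetric argument applied to $v^{-}$ bounds the motion of $\Gamma_t(u)$ toward $x-se_n$, completing the proof. The main obstacle is precisely the passage from the single-point hypothesis $u_0^{+}(x-se_n)\leq ms$ to the uniform linear upper bound on $u_0^{+}$ at scale $s$: this is where the smallness of the Lipschitz constant of $\Gamma_0$ and the uniform bound on $\Delta u_0$ are essential, via Lemmas~\ref{lem:D} and~\ref{lem:2.11}, which let us compare $u_0^{+}$ with a harmonic function vanishing on $\Gamma_0$. Once this linear control is in place, the velocity bound for one-phase (ST1) follows from the now-standard theory of~\cite{ck2} and~\cite{cjk1}.
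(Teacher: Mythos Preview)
Your approach is correct and coincides with the paper's: introduce the one-phase solutions $v^{\pm}$ with initial data $u_0^{\pm}$, use comparison to trap $\Gamma_t(u)$ between $\Gamma_t(v^+)$ and $\Gamma_t(-v^-)$, and then invoke the one-phase distance/velocity estimate. The paper simply cites Theorem~\ref{thm:ckmain}(c) directly (which already encodes $d/t\sim u_0(x-de_n)/d$) rather than unpacking the boundary-Harnack and speed steps as you do, but the argument is the same.
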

\begin{proof}
Let $v^+$ solve (ST1) with initial data $u_0^+$, and let $v^-$ solve
(ST1) with initial data $u_0^-$ and with $v^- =1$ on $\partial
B_R(0)$. Then by comparison, $ -v^- \leq u\leq v^+ $ and the lemma
follows from the one-phase result Theorem~\ref{thm:ckmain}.
\end{proof}

In the following lemma, we approximate our solution by harmonic
functions.

\begin{lemma}{\bf (Spatial regularity in the whole domain)}\label{regularity}
For $x_0\in\Gamma_0$ and $r>0$,  there exists a function
$\omega(x,t):=\omega^+(x,t) -\omega^-(x,t)$ such that
\begin{itemize}
\item[(a)] $\omega(\cdot,t)$ is harmonic in its
positive and negative phase in \\
$(1+r)\Omega_t(u)-(1-r)\Omega_t(u)$, and $\Omega(\omega^+)$,
$\Omega(\omega^-)$ are star-shaped;
\item[(b)] For a dimensional constant $C>0$, we have
$$
\omega^{+}(x,t)\leq u^{+}(x,t) \leq C\omega^{+}((1- r^{5/4})x,t)
$$
and
$$
\omega^{-}(x,t)\leq u^{-}(x,t) \leq C\omega^{-}((1 + r^{5/4})x,t)
$$
in  $B_r(x_0)\times [r^2,t(x_0,r)]$.
\end{itemize}
\end{lemma}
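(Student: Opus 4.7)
The plan is to combine the almost star-shape of $\Omega_t(u)$ proved in Lemma~\ref{star-shaped} with the almost-harmonic approximation Lemma~\ref{lem:almostharmonic}. By the remark following Lemma~\ref{star-shaped}, the function $u(\cdot,t)$ is $r^4$-monotone with respect to the star-shape cone $W_x$ for all $0\le t\le t(x_0,r)$, and since $r^4\ll r^{5/4}$ this gives us a large enough slack to replace $\Omega_t(u)$ by a genuinely star-shaped companion: I would take $\Omega^*_t$ to be, e.g., the intersection over $x_0\in B_{r_0}(0)$ of the $(1+Cr^4)$-dilates of $\Omega_t(u)$ with respect to $x_0$, obtaining a star-shaped set satisfying
\[
(1-Cr^4)\Omega^*_t\ \subset\ \Omega_t(u)\ \subset\ (1+Cr^4)\Omega^*_t.
\]

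Given $\Omega^*_t$, I define $\omega^+(\cdot,t)$ as the harmonic function in $\Omega^*_t\setminus\overline{B_{r_0/2}(0)}$ with $\omega^+=0$ on $\partial\Omega^*_t$ and $\omega^+=u^+(\cdot,t)$ on $\partial B_{r_0/2}(0)$ (extended by $0$ outside $\Omega^*_t$). Symmetrically, $\omega^-(\cdot,t)$ is harmonic in $B_R(0)\setminus\overline{\Omega^*_t}$ with $\omega^-=0$ on $\partial\Omega^*_t$ and $\omega^-=u^-(\cdot,t)$ on $\partial B_R(0)$. Then $\omega:=\omega^+-\omega^-$ has star-shaped positive and negative phases sharing the common free boundary $\partial\Omega^*_t$, which lies inside the shell $(1+r)\Omega_t(u)\setminus(1-r)\Omega_t(u)$ because $r^4\ll r$; this gives property (a).

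For the lower bound $\omega^+\le u^+$ in (b), I would apply Lemma~\ref{lem:almostharmonic} after parabolic rescaling by $\sqrt{t}$ at a free boundary point: the rescaled domain is space-time Lipschitz thanks to the $r^4$-cone monotonicity together with Lemma~\ref{lem:acs96}, so $u^++C(u^+)^{1+a}$ is superharmonic in $\Omega^*_t$ at fixed time. Since $u^+\ge 0=\omega^+$ on $\partial\Omega^*_t$ and $u^+=\omega^+$ on $\partial B_{r_0/2}(0)$, the maximum principle yields $u^+\ge\omega^+-C(u^+)^{1+a}$; the error term is absorbed by replacing the inner boundary data of $\omega^+$ with $(1-r^{5/4}/2)u^+$, which is a harmless change because $r^{5/4}\gg r^4$. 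For the upper bound in (b), I first use the $r^4$-monotonicity to write $u^+(x,t)\le u^+(y,t)$ with $y:=(1-r^{5/4})x$, since the shift $y-x$ points toward the star-shape center and has length $r^{5/4}\gg r^4$; this places $y$ at distance $\gtrsim r^{5/4}$ inside $\Omega_t(u)$. At $y$, the boundary Harnack principle (Lemma~\ref{lem:JK}) applied to $u^+(\cdot,t)/\omega^+(\cdot,t)$ — legitimate after the almost-harmonic replacement — bounds this ratio by its value at a fixed reference point $-e_n$ where $u^+=\omega^+$, yielding $u^+(y,t)\le C\omega^+(y,t)$ and hence (b).

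The main technical obstacle is verifying uniformly in $t\in[r^2,t(x_0,r)]$ that the $\sqrt{t}$-parabolic rescaling centered at an arbitrary free boundary point meets the $Lip^{1,1/2}$ hypothesis of Lemma~\ref{lem:almostharmonic} and Lemma~\ref{lem:JK} with dimensional constants — this is where I would lean on Lemma~\ref{lem:acs96} for space-time regularity and on the initial-time Harnack estimates (Lemmas~\ref{lem:harnack0}, \ref{backwardharnack0}) to propagate the comparability from $t=0$ through the interval $[r^2,t(x_0,r)]$. The argument for $\omega^-$ is entirely parallel by switching the roles of $u^+$ and $u^-$ and replacing $B_{r_0/2}(0)$ with $\partial B_R(0)$.
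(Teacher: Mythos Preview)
Your construction of $\omega^\pm$ as harmonic functions in a star-shaped companion $\Omega^*_t$ is natural, and the spatial $r^4$-monotonicity from Lemma~\ref{star-shaped} is indeed the right starting point. However, there is a genuine gap at the step where you invoke Lemma~\ref{lem:almostharmonic} on $u^+$ itself. That lemma (through Lemma~\ref{lem:acs96}) requires the domain $\Omega(u)$ to be Lipschitz in \emph{space and time}, i.e.\ $|f(x',t)-f(y',s)|\le L(|x'-y'|+|t-s|)$. At this point in the argument you only have spatial $\epsilon$-monotonicity of $u(\cdot,t)$; no time regularity of $\Gamma(u)$ has been established. Your proposed fix---``lean on Lemma~\ref{lem:acs96} for space-time regularity''---is circular, since Lemma~\ref{lem:acs96} assumes the Lipschitz-in-time hypothesis rather than producing it. In fact, the time regularity of $\Gamma(u)$ is only obtained later (Corollary~\ref{cor}), and its proof uses the very lemma you are trying to prove.

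The paper avoids this circularity by never applying Lemma~\ref{lem:almostharmonic} to $u$ directly. Instead it builds, on each subinterval $[t_k,t_{k+1}]$ of length $r^2$, explicit radial barriers $\Phi$ and $\Psi$ (super- and subsolutions of (ST2)) whose positive phases $\tilde\Omega(\Phi)$, $\Pi$ evolve with a \emph{prescribed} normal velocity of order $r^{b-2}$. These trap $\Omega_t(u)$ from outside and inside. One then solves auxiliary heat equations $v$, $w$ in those barrier domains; after rescaling by $r$, their domains are genuinely space-time Lipschitz (velocity $\lesssim r^{b-1}<1$), so Lemma~\ref{lem:almostharmonic} applies to $v$ and $w$, yielding harmonic replacements $h_1$, $h_2$. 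The final step is a Dahlberg-type comparison showing $h_1(x,t)\le C\,h_2(x-8r^b e_n,t)$, which is what gives the $r^{5/4}$ shift in (b). The point you are missing is this intermediate layer of barrier domains with controlled time evolution; without it, the almost-harmonic machinery cannot be invoked.
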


\begin{remark}
Note that we do not know yet whether the solution is close to a
Lipschitz graph in time. Also, note that  $t(x_0,r) \geq r^{7/6} \gg
r^2$, and  $\partial\{\omega^+>0\}$ need not be  $\partial
\{\omega^-
>0\}$.
\end{remark}

\begin{proof}

 1. We will only show the lemma for $u^+$. Let $\Gamma^\star$ be the
 free boundary  obtained from the one-phase problem (ST1) with the initial data
 $u^+_0$, and let $\Omega^*$ be the region bounded by $\Gamma^\star$. Let
 $v_1$ solve the heat equation in $\Omega^*$ and in $B_R(0) \times
 [0,1] -\Omega^*$, with initial data $u_0$ and with $v_1 =-1$ on $\partial
 B_R(0)$. Similarly, we define $v_2$, whose free boundary is obtained from the one-phase solution
 with initial data $u_0^-$. Then by comparison,
$$ v_2 \leq u \leq
v_1.$$ Hence the free boundary of $u$ is trapped between the free
boundaries of $v_1$ and $v_2$.  Also, since one-phase versions $v_1$
and $v_2$ behave nicely, we have those functions almost harmonic up
to $r$-neighborhood of their free boundaries for $r^2/2\leq  t \leq
r^2$. Next note that the range of $t$ is $0\leq t\leq t(x_0,r)$, and
thus both of the sets  $\Gamma_t(v_1)$ and $\Gamma_t(v_2)$ are
within distance $r$ of $\Gamma_0(u)$ in $B_r(x_0)$.  In particular,
using the one-phase result, i.e., arguing as in Lemmas  2.1 and 2.3
of [CK], we obtain
\begin{equation}\label{est0}
  v_2(x_0-2re_n,t) \sim u_0(x_0-2re_n,0) \sim v_1(x_0-2re_n,t)
\end{equation}
 for  $0\leq t\leq
  t(x_0,r)$.

\vspace{5pt}

 2. Observe that $$t(x_0, r) \leq r^{2-\alpha} \leq
r^{5/6}:=\tau.$$
 Due to Lemma~\ref{star-shaped}, we know that at
each time, $\Omega_t(u)$ is $\tau^5$
 -close to a star-shaped domain
$D_t$ up to the time $t=\tau$, i.e.,
\begin{equation}\label{est2}
D_t \subset \Omega_t(u) \subset (1+\tau^5)D_t  \subset (1+r^{4})D_t
\end{equation}
 for  $0 \leq t
\leq \tau$.
\begin{figure}
\center{\epsfig{file=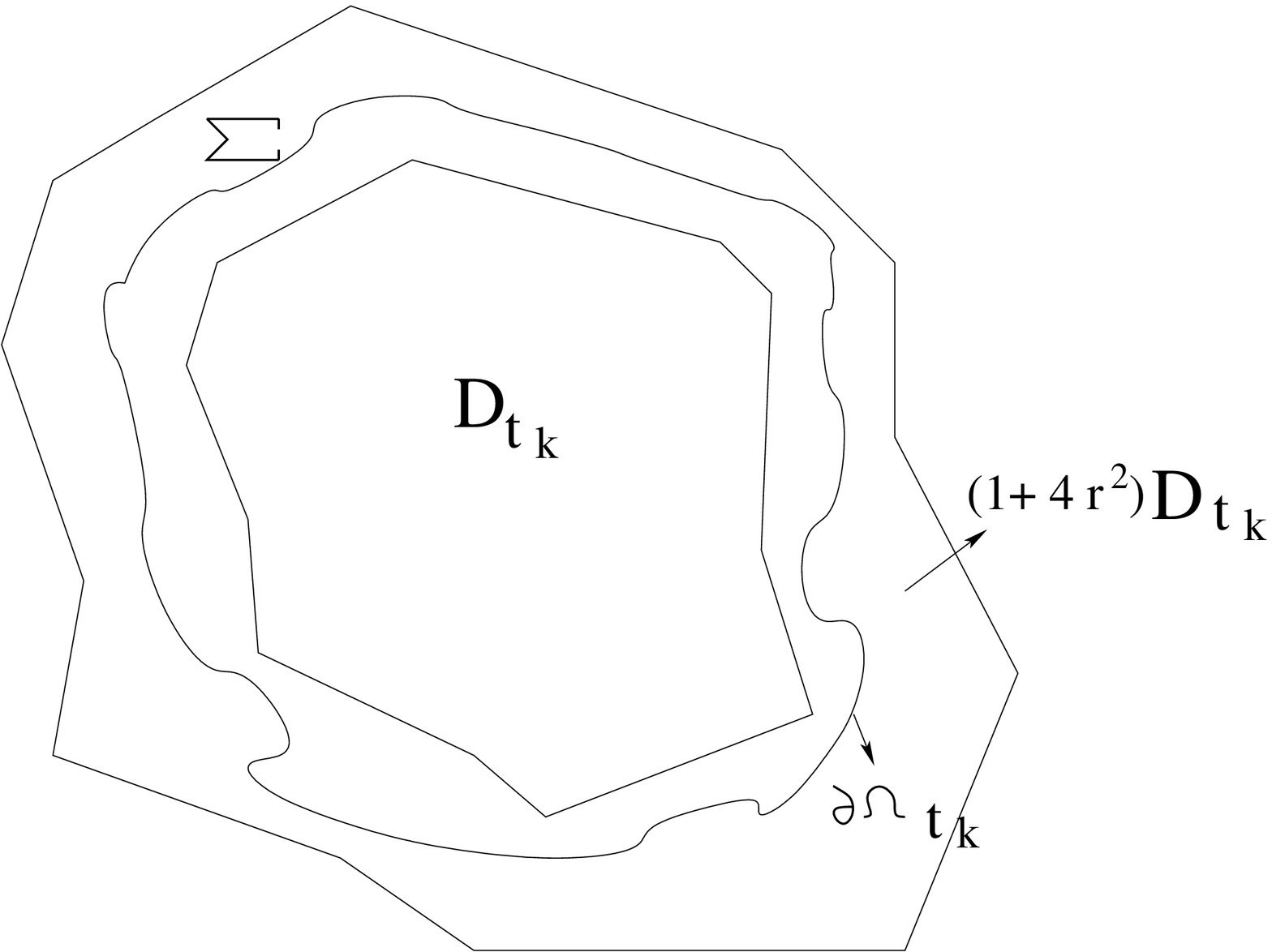,height=2.0in}} \center{Figure
2: Approximation of the positive phase by a star-shaped domain}
\end{figure}

   Then by Lemma~\ref{lem:harnack0} and (\ref{est2})
with $\beta \geq 5/6 $,
$$
u(x,t) \leq r^{(13/20)(5/6)}  =r^{13/24} \hbox{ on }
\partial (1-r^{13/20})D_0
$$
for $0 \leq t \leq \tau$. (Here note that we can apply
Lemma~\ref{lem:harnack0} up to the time $\tau$ since
$$
t(z,r^{13/20}) \geq r^{13(2-\beta)/20}  >\tau \hbox{ for any } z \in \Gamma_0.)
$$ Then by the $\tau^5$-monotonicity of $u$,
\begin{equation} \label{est3}
 u(x,t) \leq r^{13/24}  \hbox{ on } B_R(0) - (1-r^{13/20}+r^4)D_0
 \end{equation}
for $0 \leq t \leq \tau$.
 Since $\Gamma_t(u)$ stays in the
$\tau^{5/6}$-neighborhood of $\Gamma_0(u)$ up to  $\tau$, we obtain
that $\partial D_t$ stays in the $r^{25/36}$-neighborhood of
$\partial D_0$ up to the time $\tau$. Since $r^{25/36} <r^{13/20}$,
(\ref{est3}) implies
\begin{equation}\label{upperbound}
u(x,t) \leq r^{13/24}  \hbox{ on } B_R(0) - D_s
\end{equation}
 for any $0 \leq s,t
\leq \tau$.

 \vspace{5pt}

 3. Let
$$t_0=0 \leq t_1 =r^2 \leq t_2=2r^2 \leq ...\leq
t_{k_0}=k_0 r^2 \leq \tau$$ and fix a number $b$ such that
$$5/4 \leq b < 61/48.$$
We will construct a supersolution of (ST2) in
$$
(B_R(0)-(1+r^b)D_{t_k}) \times [t_k,t_{k+1}].
$$  Let $w^k(x)$ be the harmonic function in $$\Sigma:=(1+4r^b) D_{t_k} -
D_{t_k}$$ with boundary data zero on $\partial(1+4r^b) D_{t_k}$ and
$C_nr^{13/24}$ on $\partial  D_{t_k}$, where $C_n$ is a sufficiently
large dimensional constant. Extend $w(x)=0$ in $\R^n-\Sigma$.
 Next  define
$$
\Phi(x,t):= \inf\{\omega(y): |x-y| \leq
r^b-(t-t_k)\dfrac{r^{b-2}}{2}\}
$$
in $(B_R(0)-(1+r^b)D_{t_k}) \times [t_k,t_{k+1}]$. We claim that
$\omega$ is a supersolution of (ST2)  since our constant $b$
satisfies
\begin{equation}\label{claim101}
r^{b-2} >   r^{\frac{13}{24}-b}.
\end{equation}

To check this, first note that $\Phi(\cdot,t)$ is superharmonic in
its positive set and $\Phi_t \geq0$. Hence we only need to show that
\begin{equation}\label{momo}
\dfrac{\Phi_t}{|D\Phi|}  \geq |D\Phi| \hbox{ on }\Gamma(\Phi).
\end{equation}
Due to the definition of $\Phi$, $\Gamma_t(\Phi)$ has an interior
ball of radius at least $r^b/2$ for $t_k \leq t\leq t_{k+1}$. This
and the superharmonicity of $\Phi$ in the positive set yields that
$$
|D\Phi|  \leq \frac{C r^{13/24}}{r^b}  \hbox{ on } \Gamma(\Phi)
$$
for a dimensional constant  $C>0$.
 Moreover
$\Gamma(\Phi)$ evolves with normal velocity $ \frac{1}{2}r^{b-2}$.
Since \eqref{claim101} holds for our choice of $b$ (i.e., for $5/4
\leq b < 61/48$), we conclude \eqref{momo} for $r$ smaller than a
dimensional constant $r(n)$. Now we compare $u$ with $\Phi$ in
$$(B_R(0)-(1+r^b)D_{t_k}) \times [t_k,t_{k+1}].$$ Note that by
\eqref{upperbound},
$$u^+ \leq \Phi \hbox{ on }\partial (1+r^b)D_{t_k}$$
if $C_n$ is chosen sufficiently large.
 Also at $t=t_k$, (\ref{est2}) implies
 $$
u(\cdot, t_k) \leq 0  \leq \Phi(\cdot, t_k) \hbox{ on }
B_R(0)-(1+r^b)D_{t_k}.$$
  Hence we get $u \leq \Phi$ in $(\R^n-(1+r^b)D_{t_k})\times
[t_k,t_{k+1}]$.  This implies
\begin{equation}\label{order}
\Omega(u) \subset \Omega(\Phi) \cup ((1+r^b)D_{t_k}\times [t_k,
t_{k+1}]):=\tilde{\Omega}(\Phi)
\end{equation}
 for  $t_k \leq t \leq t_{k+1}$.

\vspace{5pt}

 4. Next we let $v(x,t)$ solve the heat equation in
$$
\tilde{\Omega}(\Phi)-((1-3r)\Omega_0(u)\times [t_k, t_{k+1}])
$$ with initial
data $v(\cdot, t_k)=u(\cdot,t_k)$ and boundary data zero  on
$\Gamma(\Phi)$ and $v=u$ on $(1-3r)\Gamma_0(u)$. Observe that, due
to \eqref{order}, we have $u^+ \leq v$ for $t_k \leq t\leq t_{k+1}$.

\vspace{10pt}

Since $\tilde{\Omega}(\Phi)$ is star-shaped and expands with
its normal velocity $< r^{b-2}$ which is less than $r^{-1}$,
Lemma~\ref{lem:almostharmonic} applies to $\tilde{v}(x,t):=
v(rx,r^2t)$. In particular there exists a constant $C>0$ such that
$$
(1/C)v(x,t) \leq  h_1(x,t) \leq Cv(x,t)
$$
for $(t_k+t_{k+1})/2 \leq t \leq t_{k+1}$, where $h_1(\cdot,t)$ is
the harmonic function in \\ $\Omega_t(v) - (1-2r)\Omega_0(u)$ with
boundary data zero on $\Gamma_t(v)$ and $v$ on $(1-2r)\Gamma_0(u)$.

Hence we conclude that
$$
u^+ \leq v \leq Ch_1 $$
 in $ (B_R(0)-(1-2r)\Omega_0(u)) \times [(t_k+t_{k+1})/2,t_{k+1}].
$

\vspace{5pt}

 5.  Similar arguments, now pushing the boundary purely by
the minus phase given by the harmonic function yields that
$$
B_R(0)- \tilde{\Omega}_t(\Psi):= \Pi_t \subset \Omega_t(u)
$$
for $t_k \leq t\leq t_{k+1}$, where
$$
\Pi_t = \{x\in D_{t_k}:  {\rm dist}(x,\partial D_{t_k}) \geq 3
r^b+\frac{r^{b-2}}{2}(t-t_k) \} .
$$
Let $w(x,t)$ solve the heat equation in
$$\Pi -  ((1-3r)\Omega_0(u) \times [t_k, t_{k+1}]))$$ with
initial data $u(\cdot,t_k)$ and boundary data zero on $\partial
\Pi$, and $u$ on $(1-3r)\Gamma_0(u)$. Then $u\geq w(x,t)$.

 Since $\Pi$ is star-shaped and shrinks with its normal velocity $<r^{b-2}$
which is less than $r^{-1}$, Lemma~\ref{lem:almostharmonic} applies
to $\tilde{w}(x,t):= w(rx,r^2t)$. In particular there exists $C>0$
such that
$$
u^+ \geq w \geq (1/C)h_2
$$
for $(t_k+t_{k+1})/2 \leq t \leq t_{k+1}$,
 where $h_2(\cdot,t)$ is the harmonic
function in \\
$\Pi_t -(1-2r)\Omega_0(u)$ with boundary data coinciding
with that of $w$.

\vspace{5pt}

 6. Lastly  we will show that $h_1$ and $h_2$ are not too
far away, i.e.
$$
h_1 (x,t)\leq Ch_2(x-8r^{b}e_n,t)
$$
with a dimensional constant $C>0$. Since $u$ is between $(1/C)h_2$
and $Ch_1$, this will conclude our lemma for $(t_k+t_{k+1})/2 \leq t
\leq t_{k+1}$. Then by changing the time intervals $[t_k, t_{k+1}]$
to $[t_{k}+r^2/2,  t_{k+1}+r^2/2]$, we obtain lemma for $r^2\leq r
\leq t(x_0,r)$.

 To prove the claim, observe
that
$$
\Omega_t(w) \subset \Omega_t(v) \subset (1+8r^b)\Omega_t(w)
$$
 Moreover, observe that
  \begin{eqnarray*}
  v_2((1+r^{b})x,(1+r^{b})^2 (t-t_k)+t_k)& \leq& v(x,t),w(x,t)\\
  &\leq& v_1((1-r^{b})x, (1-r^{b})^2(t-t_k)+t_k)
  \end{eqnarray*}
  for  $t_k \leq t\leq t_{k+1}$.
 This and  \eqref{est0} yield
 $$v(x_0-2re_n,t)\sim w(x_0-2re_n,t) \sim u(x_0-2re_n,0).$$
It follows that
$$
w(x,t) \leq v(x,t) \leq Cw(x-8r^{b}e_n,t)\hbox{ on } (1-2r)\Gamma_0
\times [t_k, t_{k+1}].
$$
Hence due to Dahlberg's lemma, we conclude that
$$
h_1(x,t)\leq C_1v(x,t) \leq C_2w(x-8r^{b}e_n,t) \leq
C_3h_2(x-8r^{b}e_n,t)
$$
in $B_r(x_0)\times [(t_k+t_{k+1})/2, t_{k+1}]$.  Since the
inequality holds for any $5/4 \leq b< 61/48$, we can conclude the
lemma.

\end{proof}

\begin{proposition} {\bf (Regularization in bad balls)} \label{pri-reg-bad}
For a fixed $x_0\in \Gamma_0(u)$,  suppose that either
$$
u^+(x_0-re_n,t_0) \geq M u^-(x_0+re_n,t_0)$$ or
$$
u^-(x_0+re_n,t_0) \geq M u^+(x_0-re_n,t_0)
$$
for $M>M_n$, where $M_n$ is a sufficiently large dimensional
constant. Then for $r \leq 1/M_n$, there exists a dimensional
constant $C>0$ such that
$$
|\nabla u^+(x,t)| \leq C\dfrac{u^+(x_0-re_n,t_0)}{r} \,\,\hbox{ and
}\,\, |\nabla u^-(x,t)| \leq C\dfrac{u^-(x_0+re_n,t_0)}{r}
 $$
 in
$B_r(x_0)\times [t(x_0,r)/2, t(x_0,r)].$

\end{proposition}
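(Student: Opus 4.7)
\smallskip
\noindent\textbf{Proof proposal.} Assume without loss of generality that the positive phase dominates, i.e.\ $u^+(x_0-re_n,t_0)\geq M\,u^-(x_0+re_n,t_0)$. The strategy is to approximate $u^\pm$ by harmonic functions on star-shaped domains, use the Alt--Caffarelli--Friedman formula to confirm that the subordinate phase is genuinely small, apply the boundary gradient estimate for harmonic functions in Lipschitz domains to the approximants, and finally transfer the gradient bounds back to $u^\pm$.

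First, I would invoke Lemma~\ref{regularity} at scale $r$ to obtain harmonic approximants $\omega^\pm(x,t)$, each harmonic in its own star-shaped phase, satisfying the sandwich
$$\omega^\pm(x,t)\leq u^\pm(x,t)\leq C\,\omega^\pm((1\mp r^{5/4})x,t)\quad\text{on }B_r(x_0)\times[r^2,t(x_0,r)].$$
Since by (\ref{est:time}) we have $t(x_0,r)\geq r^{7/6}\gg r^2$, the sandwich covers the entire time window $[t(x_0,r)/2,t(x_0,r)]$. Using the sandwich together with Lemmas~\ref{lem:harnack0}--\ref{backwardharnack0} (Harnack and backward Harnack at $t=0$), the domination at $t_0$ transfers up to a universal constant to the harmonic profiles at every $t$ in the window: $\omega^+(x_0-re_n,t)/r\geq (M/C)\,\omega^-(x_0+re_n,t)/r$.

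Next, because $\Omega_t(\omega^+)$ and $B_R\setminus\Omega_t(\omega^-)$ are star-shaped with Lipschitz constant controlled by (I-b), the ACF formula (Lemma~\ref{monotonicityformula}) applied at $x_0\in\Gamma_t(u)$, exactly as in the proof of Corollary~\ref{moncor}, yields
$$\frac{\omega^+(x_0-re_n,t)}{r}\cdot\frac{\omega^-(x_0+re_n,t)}{r}\leq C_n,$$
whence $\omega^-(x_0+re_n,t)/r\leq C_n/\sqrt{M}$ and the weaker phase is truly small at scale $r$. With this in hand, Lemma~\ref{lem:2.11} applied to $\omega^\pm$ in their respective star-shaped phases (after translating $x_0$ to the origin and rescaling by $r$) gives the boundary gradient estimates
$$|\nabla\omega^+(y,t)|\leq C\,\frac{\omega^+(x_0-re_n,t)}{r},\qquad |\nabla\omega^-(y,t)|\leq C\,\frac{\omega^-(x_0+re_n,t)}{r}$$
for $y\in B_{r/2}(x_0)$ and $t\in[t(x_0,r)/2,t(x_0,r)]$.

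Finally, I would transfer these bounds to $u^\pm$. The sandwich of the first step forces $\Gamma_t(u^\pm)$ to sit within an $O(r^{5/4})$-neighborhood of $\Gamma_t(\omega^\pm)$, while the parabolic rescaling $\tilde u(x,s):=u(rx+x_0,r^2 s+t)$ makes $u^\pm$ nearly harmonic at unit scale by Lemma~\ref{lem:almostharmonic}. Interior caloric estimates applied to $u^\pm-\omega^\pm$ then upgrade the pointwise sandwich to the $C^1$ bound $|\nabla u^\pm(x,t)|\leq C\,|\nabla\omega^\pm(x,t)|$ on $B_{r/2}(x_0)$. Replacing $\omega^\pm(x_0\mp re_n,t)$ on the right-hand side by $u^\pm(x_0\mp re_n,t_0)$ via Lemmas~\ref{lem:harnack0}--\ref{backwardharnack0}, and repeating the argument around every boundary point in $B_r(x_0)$ (using the near-star-shapedness from Lemma~\ref{star-shaped}), delivers the claimed estimate on the full ball. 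I expect the principal obstacle to be this last transfer step: Lemma~\ref{regularity} is only a $C^0$ sandwich and the underlying free boundary of $u$ is only approximately Lipschitz, so a clean gradient bound requires carefully combining the near-harmonicity of $u^\pm$ (Lemma~\ref{lem:almostharmonic}) with the closeness of the two free boundaries, rather than applying Lemma~\ref{lem:2.11} directly to $u^\pm$.
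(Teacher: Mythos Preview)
Your proposal has a genuine gap at the step where you invoke Lemma~\ref{lem:2.11}. That lemma gives, for a harmonic function $\omega$ in a Lipschitz domain, the comparison $|\nabla\omega(x_0-de_n)|\sim \omega(x_0-de_n)/d$ at \emph{each fixed distance} $d$ from the boundary. It does \emph{not} yield the uniform bound $|\nabla\omega^+(y,t)|\le C\,\omega^+(x_0-re_n,t)/r$ for all $y\in B_{r/2}(x_0)$ that you claim. Indeed, under (I-b) one only has $\omega^+(x_0-de_n,t)\lesssim (d/r)^{\beta}\omega^+(x_0-re_n,t)$ with $\beta\ge 5/6$, so as $d\to 0$ the ratio $\omega^+(x_0-de_n,t)/d$ can blow up like $d^{\beta-1}\sim d^{-1/6}$. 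A harmonic function vanishing on a merely Lipschitz (or star-shaped) boundary need not have bounded gradient up to the boundary, and the harmonic approximants $\omega^\pm$ supplied by Lemma~\ref{regularity} have only this much regularity. Consequently your ``transfer step'' cannot succeed either: even if $u^\pm$ were exactly $\omega^\pm$, the desired conclusion would fail.

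What your argument misses is the \emph{dynamical} regularization of the free boundary. The paper's proof does not try to bound $|\nabla u^\pm|$ from static harmonic estimates. Instead it rescales to $\tilde u(x,t)=\alpha^{-1}u(rx+x_0,r^2\alpha^{-1}t)$ with $\alpha=u^+(x_0-re_n,t_0)$, and then sandwiches $\tilde u$ between a supersolution $U_1$ and a subsolution $U_2$ of (ST2), where $U_1^+$ is a one-phase Hele-Shaw solution and $U_2$ is a sup-convolution of $U_1$. The point is that the one-phase regularity theory of \cite{ck} (Theorem~\ref{thm:ckmain}(c)) already shows $\Gamma(U_1)$ regularizes over time and $|DU_1^\pm|$ is bounded; the smallness of the subordinate phase (your ACF step is correct in spirit here) is what makes the free-boundary velocity condition for $U_2$ verifiable with an error $\sqrt{\varepsilon}\,c'(t)$. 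Once $\tilde u$ is $\sqrt{\varepsilon}$-close to a smooth Lipschitz solution, the machinery of \cite{acs2} gives the gradient bound. In short, the conclusion comes from the evolution (comparison with regularizing one-phase barriers), not from applying Lemma~\ref{lem:2.11} to a fixed-time harmonic approximant.
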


\begin{remark}
1. In the next section, we will extend this Lemma for later times,
i.e., for $x_0 \in \Gamma_{t_0}$. (See Lemma~\ref{reg-bad}.)\\

2. Note that the situation given in Proposition~\ref{pri-reg-bad} is essentially a perturbation of the one-phase case in \cite{ck}. The main step in the proof is in verification of this observation: i.e., by barrier arguments we will show that our solution is very close to a re-scaled version of the one-phase solution of (ST), for which the regularity of solutions are well-understood (see Theorem 1.1).

\end{remark}

\begin{proof}

 Without loss of generality, we may assume that
$$
u^+(x_0-re_n,0) \geq Mu^-(x_0+re_n,0).
$$

\vspace{5 pt}

1. First we show that after a small amount of time $u$ become almost
harmonic near the free bounadry. By  Lemmas ~\ref{lem:harnack0} and
~\ref{backwardharnack0} imply that for $0 \leq t\leq t(x_0,r)$,
\begin{equation} \label{simil}
u^+(x_0-re_n,t) \sim u^+(x_0-re_n,0)\hbox{,  } \quad u^-(x_0+re_n,t)
\sim u^-(x_0+re_n,0)
\end{equation}
 Also note that, by the assumption on the initial data $u_0$,
  Lemma~\ref{regularity} holds at $t=0$. In other words, there exists a
function $\omega(x,0)=\omega_0(x)$ such that
\begin{itemize}
\item[(a)] $\omega_0$ is harmonic
in its positive and negative phases in \\
$(1+r)\Omega_0(u)-(1-r)\Omega_0(u)$;
\item[(b)] $\Omega(\omega_0^+)$ and  $\Omega(\omega_0^-)$ are
star-shaped;
\item[(c)] In $B_r(x_0)$ , we have
\begin{equation} \label{prelim}
\omega_0^+(x)\leq u_0^+(x) \leq C\omega_0^+((1-r^{5/4})x)
\end{equation}
and
\begin{equation}\label{prelim:2}
\omega_0^-(x)\leq u_0^-(x) \leq C\omega_0^-((1+r^{5/4})x).
\end{equation}

\end{itemize}

\medskip
Next we improve (\ref{prelim}) and (\ref{prelim:2}) for later times,
and obtained the inequalities with $C=(1+r^a)$ for $t \geq r^{3/2}$.
 By the distance estimate-Lemma~\ref{good}, the free boundary of
 $u$ moves less that $r^{9/7}<r^{5/4}$ during  the time $t=r^{3/2}$.
  Then we let $v_1$ solve the heat equation in cylindrical domains
 $$
 (1+2r^{5/4})\Omega_0(\omega^+) \times [0, r^{3/2}]\cup(B_2(0)-(1+2r^{5/4})\Omega_0(\omega^+)) \times [0, r^{3/2}],
 $$ with initial data
 $u_0$ and lateral boundary data zero on $(1+2r^{5/4})\Gamma_0(\omega^+) \times
 [0,r^{3/2}]$, and $-1$ on $\partial B_2(0) \times
 [0,r^{3/2}]$.
Similarly, we let $v_2$
 solve  the heat equation in cylindrical domains
 $$
 (1-2r^{5/4})\Omega_0(\omega^+) \times [0, r^{3/2}] \cup (B_2(0)-(1-2r^{5/4})\Omega_0(\omega^+)) \times [0, r^{3/2}]
 $$ with initial data $u_0$ and lateral boundary data zero on $(1-2r^{5/4})\Gamma_0(\omega^+) \times
 [0,r^{3/2}]$,  and $-1$ on $\partial B_2(0) \times
 [0,r^{3/2}]$.
Then by comparison, $v_2 < u< v_1$. Also
 by Lemma~\ref{lem:almostharmonic} and $\beta\geq 5/6$,
 $$
 |v_1-v_2|\leq r^{\frac{5}{4}\times\frac{5}{6}} = r^{25/24}
 $$
 in the domain. Note that on $(1 - r^{6/7})\Gamma_0(\omega^+)$,
 $|v_1| \geq r^{\frac{6}{7}\times\frac{7}{6}}=r$ and thus
 $$
|v_1-v_2| \leq r^{a_1}|v_1|\hbox{ on } (1 - r^{6/7})\Gamma_0(\omega^+) \hbox{  for } a_1=1/24.
$$
 Similarly,
 $$
 |v_1-v_2| \leq r^{a_1}|v_2| \hbox{ on }  (1 + r^{6/7})\Gamma_0(\omega^+).
 $$
 Then since $v_1$ and $v_2$ are almost harmonic in the $r^{3/4}$-neighborhood of their boundaries
 for $\frac{1}{2}r^{3/2} \leq t\leq r^{3/2}$,
 the above inequalities
 on $|v_1-v_2|$ imply the following:
 for $\frac{1}{2}r^{3/2} \leq t\leq r^{3/2}$, there exist positive harmonic
 functions $\tilde{\omega}^+(\cdot, t)$ and $\tilde{\omega}^-(\cdot, t)$
 defined
respectively in $$\Omega_t(v_2^+) \cap (B_R(0) -
(1-r^{1-b})\Omega_0(\omega^+)) \hbox{ and }\Omega_t(v_1^-) \cap
(1+r^{1-b})\Omega_0(\omega^+))$$
 where $b=1/7$, such that for some $a>0$
\begin{equation} \label{eq2}
\tilde{\omega}^+(x,t)\leq u^+(x,t) \leq
(1+r^{a})\tilde{\omega}^+((1-4r^{5/4})x,t)
\end{equation}
and
\begin{equation}\label{eq2:1}
\tilde{\omega}^-(x,t)\leq u^-(x,t) \leq
(1+r^{a})\tilde{\omega}^-((1+4r^{5/4})x,t).
\end{equation}

Now on the time interval $[0, r^{3/2}]+ \frac{k}{2}r^{3/2}$, $1 \leq
k \leq m$, we construct $v_1$ and $v_2$ so that they solve the heat
equation in the  cylindrical domains with
$$\Gamma(v_1) = (1+2r^{5/4})\Gamma_{\frac{k}{2}r^{3/2}}(\omega^+)
\times [\frac{k}{2}r^{3/2}, (1+\frac{k}{2})r^{3/2}]$$ and
$$\Gamma(v_2) = (1-2r^{5/4})\Gamma_{\frac{k}{2}r^{3/2}}(\omega^+)
\times [\frac{k}{2}r^{3/2}, (1+\frac{k}{2})r^{3/2}].$$ Then by a
similar argument as above,  we obtain harmonic functions
$\tilde{\omega}^\pm(\cdot, t)$ satisfying (\ref{eq2}) and
(\ref{eq2:1}) for
$$
\frac{1+k}{2}r^{3/2} \leq t\leq (1+\frac{k}{2})r^{3/2}.
$$ Hence we
conclude (\ref{eq2}) and (\ref{eq2:1}) for $r^{3/2} \leq t\leq
t(x_0,r)$.

\vspace{5 pt}

 2. Next we re-scale $u(x,t)$ as follows:
$$
\tilde{u}(x,t):= \alpha^{-1} u(rx+x_0,r^2\alpha^{-1} t)\hbox{ in }
2Q_{x_0},
$$
where $\alpha:=u^+(x_0-re_n,t_0) < 1.$ Then $\tilde{u}(x,t)$ solves
$$
\left\{\begin{array}{lll}
(\alpha\partial_t -\Delta)\tilde{u}=0&\hbox{ in }& \Omega(\tilde{u})\\ \\
V=|D\tilde{u}^+|-|D\tilde{u}^-| &\hbox{ on } &\Gamma(\tilde{u})\\
\\
\tilde{u}(-e_n,0)=1 \\ \\
\tilde{u}(e_n,0) =  -1/N  &\hbox{ where }&  N \geq M.
\end{array}\right.
$$
Furthermore, (\ref{simil}) implies that for $0\leq t\leq 1$,
$$
\tilde{u}^+(-e_n,t)\sim 1 \hbox {,  }\quad \tilde{u}^-(e_n,t)\sim
\frac{1}{N}.
$$
Let $\tilde{w}$ be the corresponding re-scaled version of
$\tilde{\omega}$ given in \eqref{eq2} and \eqref{eq2:1}, then in
$B_{r^{-b}}(0)\cap \Omega_0(\tilde{u})$ we have
\begin{equation}\label{improved1}
(1-r^a) \tilde{w}^+((1+4r^{5/4})x, \alpha r^{-1/2})\leq
\tilde{u}^+(x, \alpha r^{-1/2}) \leq \tilde{w}^+(x, \alpha r^{-1/2})
\end{equation}
and
\begin{equation}\label{improved2}
(1-r^a)\tilde{w}^-(x, \alpha r^{-1/2})\leq \tilde{u}^-(x, \alpha
r^{-1/2}) \leq \tilde{w}^-((1+4r^{5/4})x,  \alpha r^{-1/2})
\end{equation}
Here note that
$$ \alpha r^{-1/2} = \sqrt{r} \cdot \frac{u^+(x_0-re_n,t_0)}{r} \leq r^{1/3}.$$
Lastly,  for given $x_0\in\Gamma(\tilde{u})\cap B_1(0)$, a similar
argument as in  (\ref{v2}) implies that
 \begin{equation}\label{overtime}
 \tilde{u}(x,t) \leq (1+r^b)\tilde{u}(x,0)\hbox{ in }\partial B_{\frac{1}{2}r^{-b}}(r^{-b}e_n)\times [0,1].
 \end{equation}

\vspace{5 pt}

3. We claim that we can construct a supersolution $U_1$ and a
subsolution $U_2$ of (ST2)  such that
$$
U_2(x,t) \leq \tilde{u}(x,t) \leq U_1(x,t) \leq  U_2(x- \sqrt{\e}
e_n,t)\hbox{ in } B_1(0)\times [\alpha r^{-1/2},1]
$$
and that $U_2$ is a smooth solution with uniformly Lipschitz
boundary in space and time. Then for sufficiently small $r>0$ the
lemma will follow from analysis parallel to that of \cite{acs2}.

To illustrate the main ideas, let us first assume that
\begin{itemize}
\item[(a)] \eqref{improved1} and \eqref{improved2}
hold in the entire ring domain $R\times [0,1]$,
 where
 $$R=\{x:d(x,\Gamma_0(\tilde{u})) \leq r^{-b}\};$$
\item[(b)] $\tilde{u}(x,t) \leq (1+r^b)\tilde{u}(x,0)
\hbox{ on } \partial R\times [0,1]$.
\end{itemize}

Let $U^+_1$ be the solution of (HS) in
$\Sigma=(\R^n-(\Omega_0-R))\times [0,1]$ with initial data
$\tilde{w}(x,t)$ and boundary data $(1+r^b)\tilde{u}(x,0)$, and let
$$
U_1=U_1^+-U_1^-\hbox{ in } R\times [0,1],
$$ where $U_1^-(\cdot,t)$ is the harmonic function in $R-\Omega(U_1^+)$
with fixed boundary data zero on $\Gamma(U_1^+)$ and $C/N$ on
$\partial R - \Omega(U_1^+)$.  Then $U_1$ is a supersolution of
(ST2) in $\Sigma$, and thus by Theorem~\ref{thm:cp} and the
 assumptions (a)-(b) we have $ \tilde{u}\leq U_1$ in $\Sigma$.

\vspace{5pt}

4. The construction of the subsolution $U_2$ is a bit less
straightforward. We use
$$
U^+_2(x,t):= (1-\e)\sup_{|y-x|\leq
\sqrt{\e}(1-c(t))}U_1^+((1+\sqrt{\e})y,t),
$$
where $\e = 1/N$ and $c(t) := t^{4/5}$. Then we define
$$
U_2 = U_2^+-U_2^- \hbox{ in } R\times [0,1],
$$
 where  $R$ is the ring domain as given above and
$U_2^-(\cdot,t)$ is the harmonic function in $R-\Omega(U_2^+)$ with
fixed boundary data zero on $\Gamma(U_2^+)$ and $C/N$ on $\partial R
- \Omega(U_2^+)$. Then $U_2$ satisfies the free boundary condition
$$
V_{U_2} \leq (1+\e)|DU_2^+| -\sqrt{\e}c'(t).
$$
Therefore, $U_2$ is a subsolution of (ST2) if we can show  that
\begin{equation}\label{question}
\sqrt{\e}c'(t) \geq \e|DU_2^+| + |DU_2^-| \hbox{ on } \Gamma(U_2)
\end{equation}
and  $\int_0^1 c'(s) ds\leq 1$.

\vspace{5pt}

The analysis performed in \cite{ck}, as in the proof of (c) of
Theorem~\ref{thm:ckmain}, yields the following: at a fixed time $t$,
$\Gamma(U_1)$ regularizes in the scale of $d:=d(t)$ which solves
$$
t=\frac{d^2}{U_1(-de_n,0)}.
$$
Therefore,
$$
|DU_2^+| \sim \frac{U_2^+(-de_n,0)}{d}\hbox{ and } |DU_2^-| \sim
\frac{U_2^-(de_n,0)}{d}
$$
on
$$
\Gamma(U_2) \times [t/2,t].
$$
Observe that since $\beta \geq 5/6$,
 $$
 U_2^+(-de_n,0)  \leq d^{5/6}\hbox{ and } U_2^-(de_n,0) \leq \e d^{5/6},
 $$ then we have
$$
\e\frac{U_2^+ (-de_n,0)}{d}+ \frac{U_2^-(de_n,0)}{d}\leq \e
d^{-1/6} \leq \sqrt{\e} t^{-1/5}.
$$
where the last inequality follows from
$$
t=d^2/U_1(-de_n,0) \leq d^2/d^\alpha \leq d^{5/6}.
$$
 Hence $c(t) = t^{4/5}$ satisfies \eqref{question},
and we conclude that $U_2$ is a subsolution of (ST2).

 Now we can use the fact
$$
U_2\leq \tilde{u}\leq U_1\hbox{ in } B_c(0)\times [0,c]
$$
 to conclude that $\tilde{u}$ is $\sqrt{\e}$- close to  $U_1$: a Lipschitz (and smooth) solution
in $B_1(0)\times [1/2,1]$. Once we can confirm this, everything else
follows from analysis parallel to that of \cite{acs2} with the
choice of a sufficiently small $\e$.

\vspace{5pt}

5.   Now we proceed to the general proof without the simplified
assumptions (a) and (b) in step 3, which  are replaced with local
inequalities \eqref{improved1}-\eqref{improved2} and
\eqref{overtime}. For this we need to perturb the initial data
outside of $B_1(0)$ (see section 4, p 2781-2783 of \cite{cjk2}), to
obtain  functions $W_1(x)$ and $W_2(x)$  which satisfies the
followings:
\begin{itemize}
\item[(a)]  $\{W_k>0\}$ with $k=1,2$ is star-shaped and coincides with
$\Omega_{\alpha r^{-1/2}}(\tilde{w})$ in $B_{r^{-b}}(0)$;
\item[(b)] $\{W_2>0\}\subset\Omega_{\alpha r^{-1/2}}(\tilde{w})\subset \{W_1>0\}$ ;
\item[(c)] $d(x, \{W_k>0\} ) \geq r^{-b}$ with $k=1,2$ for
$x\in\Gamma_{\alpha r^{-1/2}}(\tilde{w}) \cap (\R^n-
B_{2r^{-b}}(0))$;
\item[(d)] $W_k$ is harmonic in $\{W_k>0\}-K$ with
boundary data zero on $\Gamma(W_k)$ and $(1+r^b)\tilde{w}(x,\alpha r^{-1/2})$ on $\partial K$,
where
$$
K = \{x: d(x,\Gamma(W_k)) \geq r^{-b}\}.
$$
\end{itemize}

Let $U_k$ be the solution of Hele-Shaw problem in
$$
\R^n-\frac{1}{2}\{W_k>0\}\times [\alpha r^{-1/2},1]
$$ with initial
data $W_1$ and with lateral boundary data $(1+r^b)\tilde{w}(x,\alpha
r^{-1/2})$. Due to Proposition 4.1 of \cite{cjk2}, for sufficiently
small $r>0$,
 the level sets of $U_1$ is then $\e c$-close to those of $U_2$ in $B_1(0)\times [0,1]$.
Hence we can use $U_2$ instead of $U_1$ in step 4. and proceed as in
step 4 to conclude.
 \end{proof}

\section{Decomposition based on local phase dynamics}

Throughout the rest of the paper, we fix $x_0\in\Gamma_0$ and  a sufficiently
small constant $r>0$, and will prove the regularization of the solution in
$B_{r}(x_0) \times [t(x_0,r)/2, t(x_0,r)]$. We also fix a constant $M \geq M_n$,
where $M_n$ is  a  sufficiently large dimensional constant.  If the ratio
between $u^+(x_0-re_n,0)$ and $u^-(x_0+re_n,0)$  is bigger than $M$,
 then we can directly apply
Proposition~\ref{pri-reg-bad} to prove the main theorem. Therefore
we  assume that
\begin{equation}\label{est101}
M^{-1} u^-(x_0+re_n,0)\leq u^+(x_0-re_n,0) \leq M u^-(x_0+re_n,0).
\end{equation}
Let
$$
C_0:= \max[\dfrac{u^+(x_0-re_n,0)}{r}, \dfrac{u^-(x_0+re_n,0)}{r}].
$$
Then since $u_0^+$ and $u_0^-$ are comparable with harmonic functions,
 $C_0$ is less
than a constant depending on $n$ and $M$ (See Corollary~\ref{moncor}). Also note that
$$
C_0 \geq r^{\alpha-1} \geq r^{1/6}.
$$
 Let
$$
A^+= \{x \in \Gamma_0\cap B_{2r}(x_0): \dfrac{u^+(x-se_n,0)}{s} \geq
MC_0 \hbox{ for some } r^{5/4}\leq s\leq r\}
$$
and
$$
A^-=\{x\in\Gamma_0\cap B_{2r}(x_0): \dfrac{u^-(x+se_n,0)}{s} \geq MC_0
\hbox{ for some } r^{5/4} \leq s\leq r\}.
$$
Denote  $$ A=A^+ \cup A^-.$$

\begin{lemma}\label{bigbigno}
If $$\dfrac{u^\pm(x\mp
se_n,0)}{s} \geq MC_0 \hbox{ for some } s \leq r,$$ then
$$\dfrac{u^\mp(x\pm se_n,0)}{s}\leq C_0.$$
\end{lemma}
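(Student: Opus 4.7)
The plan is to derive the trade-off from the Alt--Caffarelli--Friedman monotonicity formula (Lemma~\ref{monotonicityformula}) applied at the point $x$, combined with a Harnack transfer from $x_0$ to $x$ at the reference scale $r$.

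First I would show that the values of $u_0^\pm$ at the scale-$r$ interior points centered at $x$ are themselves controlled by $C_0$. Since $x \in \Gamma_0 \cap B_{2r}(x_0)$, and $\Omega_0$ is star-shaped with small Lipschitz constant by (I-a)--(I-b), the points $x_0 \mp re_n$ and $x \mp re_n$ lie inside the respective phases at comparable distance $\sim r$ from $\Gamma_0$ and at mutual distance at most $3r$. Using the Laplacian bound (I-c) together with the growth estimate from (I-d)--(I-b) (so that the harmonic correction of size $O(r^2)$ is absorbed by the actual value $\gtrsim r^\alpha \geq r^{7/6}$), one can compare $u_0^\pm$ with truly harmonic functions on this scale, and interior Harnack then gives
$$ \frac{u_0^+(x - re_n,0)}{r} \leq C\, C_0, \qquad \frac{u_0^-(x + re_n,0)}{r} \leq C\, C_0 $$
for a universal constant $C$ depending only on $n$ and the Lipschitz constant.

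Second, I would invoke the ACF monotonicity formula at $x$. Since $|\Delta u_0| \leq N_0$, the parts $u_0^\pm$ are not quite subharmonic, but replacing them by $u_0^\pm + \tfrac{N_0}{2n}|y-x|^2$ (or by truly harmonic approximants in the respective phases) perturbs the relevant quantities only by $O(s^2)$, which is negligible against $u_0^\pm \gtrsim s^\alpha$. Then monotonicity $\phi_x(s) \leq \phi_x(r)$ together with the standard two-sided asymptotic
$$ \phi_x(\rho) \sim \Big(\tfrac{u_0^+(x-\rho e_n,0)}{\rho}\Big)^2\Big(\tfrac{u_0^-(x+\rho e_n,0)}{\rho}\Big)^2, $$
(exactly as used in the proof of Corollary~\ref{moncor}) combines with the previous step to yield
$$ \frac{u_0^+(x - se_n,0)}{s} \cdot \frac{u_0^-(x + se_n,0)}{s} \leq K_n\, C_0^2, $$
with $K_n$ a dimensional constant. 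The lemma is then immediate: if one factor is at least $MC_0$, the other is at most $K_n C_0/M \leq C_0$ as soon as $M \geq M_n := K_n$, which fixes the dimensional threshold on $M$.

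The main obstacle is ensuring that both ingredients of the second step (handling the Laplacian error in $u_0^\pm$ and the Harnack transfer from $x_0$ to $x$) really produce only dimensional constants, independent of the possibly small value of $C_0$ and of the ratio $M$. Quantitatively, the subharmonic envelope correction must be dominated by the lower bound $s^\alpha$ from (I-d), and the boundary/interior Harnack must rely only on the Lipschitz constant of $\Omega_0$, which is controlled by (I-b). Once this book-keeping is in place, the trade-off claimed by the lemma is pure algebra from the ACF product bound.
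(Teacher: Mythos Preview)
Your proposal is correct and follows essentially the same route as the paper: compare $u_0^\pm$ with harmonic functions, apply the ACF monotonicity formula (as in Corollary~\ref{moncor}) centered at $x$ to bound the product $\frac{u_0^+(x-se_n)}{s}\cdot\frac{u_0^-(x+se_n)}{s}$ by $\sqrt{\phi(r)}\lesssim C_0^2$, and then divide. Your write-up is simply more explicit than the paper's about the two constituent steps (the Harnack transfer from $x_0$ to $x$ at scale $r$, and absorbing the $|\Delta u_0|\le N_0$ error into the harmonic comparison), but the argument is the same.
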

\begin{proof}
Since $u^\pm_0$ are comparable with harmonic functions $h^{\pm}$, we can argue similarly as in
 Corollary~\ref{moncor}. Observe
 $$\begin{array}{lll}
 \dfrac{u_0^+(x- se_n)}{s} \cdot \dfrac{u_0^-(x+ se_n)}{s} &\sim& \dfrac{h^+(x- se_n)}{s} \cdot \dfrac{h^-(x+ se_n)}{s}\\ \\

 &\lesssim& \sqrt{\phi(r)} \lesssim C_0^2.
 \end{array}
 $$
\end{proof}

Now for $x
\in A^+ $,  we can find the largest constant $r_x < r$ such that
$$
 \dfrac{u^+(x-r_xe_n,0)}{r_x} = MC_0
 $$
then  let
$$
Q_x = B_{r_x}(x) \times [0, \frac{r_x}{MC_0}].
$$
Also for $x \in A^- $, we can similarly define $r_x$ and $Q_x$.
 Let
\begin{equation}\label{def01}
\Sigma :=B_{r}(x_0)\times [0,t(x_0,r)] - \bigcup_{x \in A}
Q_x.
\end{equation}
(See Figure 3)
\begin{figure}
\center{\epsfig{file=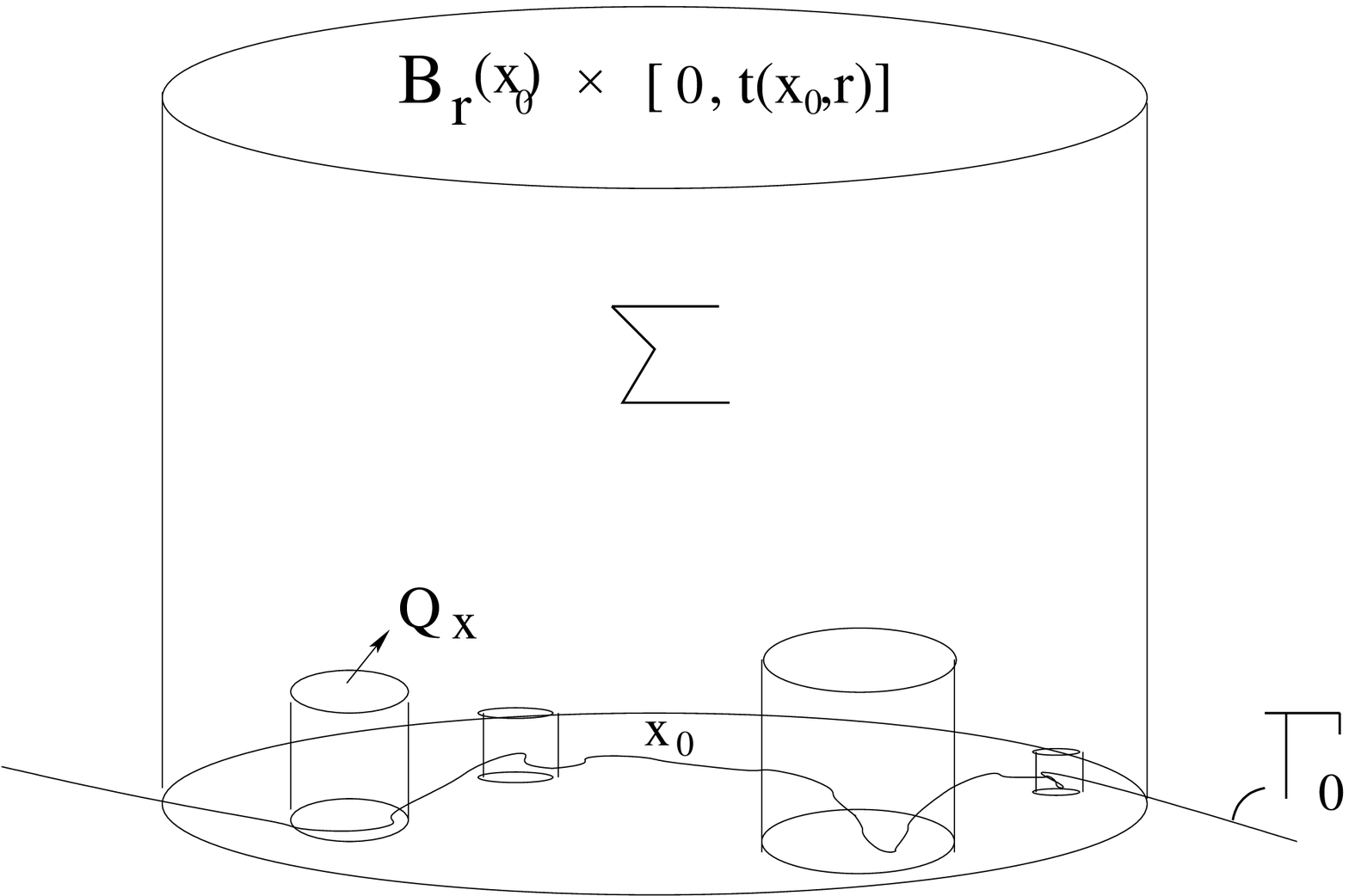,height=2.0in}} \center{Figure 3:
Decomposition of the domain}
\end{figure}

The following statement is a direct consequence of the definition  \eqref{def01}.
\begin{lemma} \label{measure}
If $x \in \Gamma_0 \cap \Sigma_0$, then for all $r^{5/4}\leq s \leq
r$
$$
\dfrac{u^+(x-se_n,0)}{s}, \dfrac{u^-(x+se_n,0)}{s}\leq MC_0.
$$
\end{lemma}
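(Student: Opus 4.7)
The plan is to prove the contrapositive, which unravels immediately from the definitions of $A^{\pm}$ and $\Sigma$ in \eqref{def01}. Assume, aiming for a contradiction, that some $x \in \Gamma_0 \cap \Sigma_0$ admits an $s \in [r^{5/4}, r]$ with $u^+(x - se_n, 0)/s > MC_0$ (the case for $u^-$ is symmetric). Then $x$ directly satisfies the defining inequality of $A^+$, so $x \in A^+ \subset A$. For any $y \in A$ the associated cylinder $Q_y = B_{r_y}(y) \times [0, r_y/(MC_0)]$ contains the base point $(y,0)$ trivially, so in particular $(x,0) \in Q_x \subset \bigcup_{y \in A} Q_y$. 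By the definition \eqref{def01} of $\Sigma$ this forces $(x,0) \notin \Sigma$, hence $x \notin \Sigma_0$, contradicting the hypothesis. Thus for every $s \in [r^{5/4}, r]$ one must have $u^+(x - se_n,0)/s \le MC_0$, and the parallel statement for $u^-$.

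The only point deserving an extra sentence is that the radius $r_x$ appearing in the definition of $Q_x$ is actually well defined. The map $s \mapsto u^+(x-se_n,0)/s$ is continuous on $(0,r]$; by hypothesis it exceeds $MC_0$ at our chosen $s$, while at the scale $r$ itself Corollary~\ref{moncor} together with Lemma~\ref{bigbigno} and the uniform bound $C_0 \le C(n,M)$ produced by the harmonic approximation of $u_0$ near $\Gamma_0$ keeps the ratio below $C_0 < MC_0$. The intermediate value theorem then furnishes a value $r_x < r$ where equality holds, and taking the largest such $r_x$ matches the definition. No additional tool is needed: the lemma is purely a bookkeeping consequence of how $A$ and $\Sigma$ were set up, and I do not anticipate any real obstacle in writing it out.
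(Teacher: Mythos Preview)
Your contrapositive argument in the first paragraph is correct and is exactly what the paper intends: it states the lemma as ``a direct consequence of the definition \eqref{def01}'' and gives no further proof. If $x\in\Gamma_0\cap B_r(x_0)$ violates the bound at some scale $s\in[r^{5/4},r]$, then $x\in A$ by definition, hence $(x,0)\in Q_x\subset\bigcup_{y\in A}Q_y$ and $x\notin\Sigma_0$.

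Your second paragraph on the well-definedness of $r_x$ is not needed for the lemma itself (it concerns the setup preceding the lemma), and the tools you cite are not quite the right ones: Corollary~\ref{moncor} and Lemma~\ref{bigbigno} control the \emph{product} of the two phases, not the individual ratio at scale $r$. The correct reason that $u^+(x-re_n,0)/r<MC_0$ for every $x\in\Gamma_0\cap B_{2r}(x_0)$ is the boundary Harnack principle (Lemma~\ref{lem:D}) applied to the harmonic approximation of $u_0^+$: it gives $u^+(x-re_n,0)\sim u^+(x_0-re_n,0)\leq C_0 r$ with a dimensional comparison constant, which is $<MC_0$ once $M$ exceeds that constant. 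This is a minor point and does not affect the validity of your proof of the lemma.
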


The next proposition is the main result in this section, which states
that the solution is ``well-behaved" in $\Sigma$.

\begin{proposition}\label{main1}
There exists a dimensional constant $K>0$ such that for all
$(x,t)\in\Gamma\cap\Sigma$
$$
\dfrac{u^{+}(x-se_n,t)}{s},\dfrac{u^{-}(x+se_n,t)}{s} <KMC_0 \,\,
\hbox{ for
 }\,\, r^{5/4} \leq s\leq r.\leqno(A)
$$
\end{proposition}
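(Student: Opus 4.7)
The plan is to argue by contradiction, using the definition of $\Sigma$ together with comparison against one-phase (ST1) barriers to trace any hypothetical failure of (A) back to a point where Lemma~\ref{measure} already forbids it.

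Suppose (A) fails at some $(x_1, t_1)\in \Gamma\cap\Sigma$ with $s_1\in [r^{5/4},r]$, and take $t_1$ minimal. After relabeling if necessary, assume $u^+(x_1-s_1e_n,t_1) > KMC_0 s_1$. Let $v^+$ denote the (ST1) solution with initial data $u_0^+$. By Lemma~\ref{basic:visc}(c) and the comparison principle Lemma~\ref{thm:cp}, $u^+\leq v^+$ globally, so it suffices to bound $v^+(x_1-s_1e_n, t_1)$. Applying Theorem~\ref{thm:ckmain}(c) to $v^+$ yields an estimate of the form $v^+(x_1-s_1e_n, t_1) \lesssim s_1\cdot u_0^+(\zeta-de_n,0)/d$, where $\zeta\in\Gamma_0$ is the initial-time footprint of $x_1$ under the one-phase flow and $d=|x_1-\zeta|$; by Lemma~\ref{good} applied to $v^+$, $d$ is bounded by a dimensional multiple of $r$.

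It then remains to show that $\zeta$ can be taken in $\Sigma_0$: once this is done, Lemma~\ref{measure} gives $u_0^+(\zeta-de_n,0)/d \leq MC_0$ for $d$ in the admissible range $[r^{5/4},r]$, hence $v^+(x_1-s_1e_n,t_1)/s_1 \leq CMC_0$ with a dimensional constant $C$, contradicting $K>C$. Because $(x_1,t_1)\in\Sigma$, for every $y\in A$ either (i) $|x_1-y|\geq r_y$, in which case $\zeta$ lies outside a controlled neighborhood of $y$ and can be shifted into $\Sigma_0\cap\Gamma_0$ by a harmless translation; or (ii) $t_1 > r_y/(MC_0)$, in which case one restarts the comparison at time $r_y/(MC_0)$, using Lemmas~\ref{lem:harnack0} and~\ref{backwardharnack0} together with Lemma~\ref{regularity} to propagate a now-smoothed initial profile forward through the remaining time interval. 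The parallel argument for $u^-$, using the (ST1) barrier $v^-$ with initial data $u_0^-$, delivers the companion bound.

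The main obstacle is exactly the case analysis just sketched: matching the spatial scale $r_y$ of an excluded cylinder against its temporal scale $r_y/(MC_0)$ and the admissible range $s_1\in [r^{5/4},r]$, and verifying that the constant $K$ produced in either alternative depends only on the dimension (and not on $M$ or $r$). Here the precise choice of the height $r_y/(MC_0)$ of the cylinders $Q_y$ -- equal to the one-phase travel time associated with the excess gradient $MC_0$ -- plays the crucial role, as does the Alt--Caffarelli--Friedman monotonicity formula (Corollary~\ref{moncor}), which prevents both phases from simultaneously acquiring a gradient much larger than $C_0$ and thereby reconciles the two halves of the argument for $u^+$ and $u^-$.
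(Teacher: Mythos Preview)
Your approach differs from the paper's and, as written, has a genuine gap. You try to trace the failure point $(x_1,t_1)$ back to $t=0$ via the one-phase barrier $v^+$ and then invoke Lemma~\ref{measure} at the footprint $\zeta$. But the membership $(x_1,t_1)\in\Sigma$ is a \emph{space-time} condition: it only says that for each $y\in A$ either $|x_1-y|\ge r_y$ or $t_1>r_y/(MC_0)$. Neither alternative forces the one-phase footprint $\zeta\in\Gamma_0$ to lie in $\Sigma_0$. In case (i) the footprint, which sits a distance $d$ behind $x_1$, can perfectly well land inside $B_{r_y}(y)$ for some $y\in A$; your ``harmless translation'' is not justified. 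In case (ii) the ``restart at time $r_y/(MC_0)$'' is not an argument: at that restart time the profile may already carry a gradient far above $MC_0$ at the relevant scale, and Lemmas~\ref{lem:harnack0}--\ref{regularity} do not by themselves damp it. Moreover, the distance $d$ travelled by the one-phase front up to $t_1$ need not fall in $[r^{5/4},r]$, so Lemma~\ref{measure} is not directly applicable even when $\zeta\in\Sigma_0$.

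The paper's proof avoids all of this by a scale change. At the first failure time $T_0$ it finds the \emph{largest} scale $h\in(s,r/2)$ along the same ray at which $u^+(z_0-he_n,T_0)/h$ drops to the moderate level $M^2C_0$ (this exists by Lemma~\ref{lem:harnack0} and the definition of $C_0$). It then steps back in time by $t(y_0,h)/2$ using Lemma~\ref{laterharnack}, applies the ACF-type bound (as in Lemma~\ref{bigbigno}, via Lemma~\ref{regularity}) to see that $u^-$ is negligible at scale $h$, and invokes the key regularization Lemma~\ref{reg-bad}. That lemma gives $|\nabla u^+(\cdot,T_0)|\sim M^2C_0$ throughout $B_h(y_0)\supset B_s(z_0)$, contradicting $u^+(z_0-se_n,T_0)/s=KMC_0$ with $K\gg M$. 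The essential missing ingredient in your outline is precisely this use of Lemma~\ref{reg-bad} to propagate a moderate gradient from a larger scale $h$ down to the scale $s$; without it, there is no mechanism to rule out growth of $u^+/s$ as $s$ decreases below the bad-ball radii.
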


Before proving Proposition~\ref{main1}, we show an immediate consequence of
the proposition:
 we are ready to show that $\Gamma(u)$ is  close to a Lipschitz graph in time as well as in space.

\begin{corollary} \label{cor}
for $(x,t)\in\Gamma \cap \Sigma$, suppose
$(x+ke_n,t+\tau)\in\Gamma$. Then
$$
|k| \leq  r^{5/4} \,\,\hbox{ if }\,\, \tau \in [0,
\frac{r^{5/4}}{K_1MC_0}].
$$
where  $K_1$ is a dimensional constant.
\end{corollary}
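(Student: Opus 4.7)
The plan is to extract a speed bound for the free boundary from Proposition~\ref{main1} and then integrate it over the time interval $[t,t+\tau]$. Without loss of generality suppose $k\geq 0$, so that it is the positive phase which has advanced in the $e_n$ direction; the case $k<0$ is symmetric after exchanging the roles of $u^+$ and $u^-$.

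First I would invoke Proposition~\ref{main1} at $(x,t)\in\Gamma\cap\Sigma$ to obtain, for every $r^{5/4}\leq s\leq r$, the axial bound
$$
u^+(x-se_n,t)\leq KMC_0\, s.
$$
Combined with the spatial regularity available from Lemma~\ref{regularity} (so that $u^+(\cdot,t)$ is comparable to a harmonic function in the shell $\{r^{5/4}\leq \mathrm{dist}(\cdot,\Gamma_t)\leq r\}$) and the Carleson-type estimate of Lemma~\ref{lem:fgs84}, this upgrades to the ball-wise bound
$$
u^+(y,t)\leq C KMC_0\,\mathrm{dist}(y,\Gamma_t(u))
$$
for every $y\in B_{r/2}(x)\cap \Omega_t(u)$ with $\mathrm{dist}(y,\Gamma_t(u))\geq r^{5/4}$.

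Next, by Lemma~\ref{basic:visc}(c) the function $u^+$ is a viscosity subsolution of the one-phase Stefan problem $(ST1)$, so I would compare it on the slab $B_{r/2}(x)\times [t,t+\tau]$ with the planar traveling-wave supersolution
$$
\bar v(y,s)=A\bigl(A(s-t)-(y-x)\cdot e_n+\tfrac12 r^{5/4}\bigr)^+,\qquad A:=4KMC_0,
$$
of $(ST1)$. A direct check shows $\bar v_t-\Delta\bar v=A^2>0$ inside $\{\bar v>0\}$ and $V_{\bar v}=A=|D\bar v|$ along $\partial\{\bar v>0\}$, so $\bar v$ is a supersolution. Using the ball-wise bound above one verifies $u^+(\cdot,t)\leq \bar v(\cdot,t)$ and, on $\partial B_{r/2}(x)\times[t,t+\tau]$, the same inequality, provided the constants are chosen so that $A$ dominates $CKMC_0$. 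The comparison principle (Lemma~\ref{thm:cp}) applied to $(ST1)$ then forces $\Omega_{t+\tau}(u^+)\cap B_{r/2}(x)\subset\Omega_{t+\tau}(\bar v)$, so any free boundary point of $u$ of the form $x+ke_n$ with $k\geq 0$ satisfies
$$
k\leq A\tau+\tfrac12 r^{5/4}\leq r^{5/4}
$$
whenever $\tau\leq r^{5/4}/(K_1 MC_0)$ with $K_1:=8K$.

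The main obstacle I anticipate is promoting the one-dimensional bound supplied by Proposition~\ref{main1} to the transversal ball-wise control that the plane-wave comparison actually needs. For interior points this follows from the caloric character of $u^+$ via Lemma~\ref{regularity}; near the free boundary I would exploit the $\sigma$-monotonicity inherited from the star-shaped initial data (Lemma~\ref{star-shaped} and the remark thereafter) together with the fact that every free boundary point $(y,t')\in\Gamma\cap\Sigma\cap (B_r(x)\times[t,t+\tau])$ inherits the same scale-$r^{5/4}$ bound from Proposition~\ref{main1}, propagating the axial estimate to all directions in the transversal ball. An entirely parallel argument applied to $u^-$ (which is a subsolution of the mirror one-phase problem) handles $k<0$, and combining the two yields $|k|\leq r^{5/4}$ as claimed.
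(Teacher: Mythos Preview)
Your traveling-wave comparison in $B_{r/2}(x)$ cannot be initiated: the ordering $u^+(\cdot,t)\leq \bar v(\cdot,t)$ fails because of a geometric mismatch between the supports, not because of the size of $u^+$. The zero set of $\bar v(\cdot,t)$ is the half-space $\{(y-x)\cdot e_n\geq\tfrac12 r^{5/4}\}$, so you would need $\Omega_t(u)\cap B_{r/2}(x)\subset\{(y-x)\cdot e_n<\tfrac12 r^{5/4}\}$. But $\Gamma_t(u)$ is only (close to) a Lipschitz graph of slope $\sim L$ through $x$, so over a transversal distance $|y'-x'|\sim r/2$ it can rise by $\sim Lr/2\gg r^{5/4}$ in the $e_n$ direction. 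Hence there are points $y\in B_{r/2}(x)\cap\Omega_t(u)$ with $(y-x)\cdot e_n$ of order $Lr$, at which $u^+(y,t)>0$ while $\bar v(y,t)=0$. Neither your ball-wise bound on $u^+$ nor the $\sigma$-monotonicity from Lemma~\ref{star-shaped} can repair this, since at such points no positive value of $u^+$ is admissible under $\bar v$. The same obstruction recurs on the lateral boundary $\partial B_{r/2}(x)\times[t,t+\tau]$, so Lemma~\ref{thm:cp} never applies.

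The paper resolves the scale mismatch by working at scale $r^{5/4}$ rather than $r$: it centers a radial barrier at $y_1=y_0+r^{5/4}e_n$ in the annulus $B_{2r^{5/4}}(y_1)\setminus B_{r(t)}(y_1)$ with shrinking inner radius $r(t)=\tfrac12 r^{5/4}-C_3(t-t_0)$. The two inputs are (i) a sup bound $\sup_{B_{10r^{5/4}}(y_0)}u\leq C KMC_0\,r^{5/4}$, obtained from Proposition~\ref{main1} together with Lemma~\ref{regularity} and Harnack, which controls $u$ on the outer sphere for every time in the interval; and (ii) the near-star-shapedness of Lemma~\ref{star-shaped} (with $\sigma\leq t_0^5\leq r^{25/6}\ll r^{5/4}$), which gives $u(\cdot,t_0)\leq 0$ in $B_{r^{5/4}/2}(y_1)$ and supplies the initial separation. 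At this small scale the Lipschitz tilt of $\Gamma_t(u)$ contributes only $\sim L\cdot r^{5/4}\ll r^{5/4}$, so the containment issue disappears. Your planar wave could be rescued by shrinking the comparison domain to a ball of radius $\sim r^{5/4}$, but then the argument essentially collapses to the paper's.
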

\begin{proof}
 Due to Lemma~\ref{regularity},
at any time $0\leq t\leq t(x_0,r)$, we have
\begin{equation}\label{reminder}
h^{\pm}(x,t) \leq  u^{\pm}(x,t) \leq C_1h^{\pm} (x\mp r^{5/4}e_n,t).
\end{equation}
 in  $B_r(x_0)$, where $h:=h^{+}(\cdot, t)-h^-(\cdot,t)$ is harmonic
 in its positive and negative phase in $(1+r)\Omega_t(u) -(1-r)\Omega_t(u)$,
  and the domains $\Omega(h^{+})$ and $\Omega(h^{-})$ are both star-shaped with respect to $B_{r_0}(0)$.
 Let us pick $(y_0,t_0)\in \Gamma\cap\Sigma$. Due to
Proposition~\ref{main1}, \eqref{reminder} and the Harnack inequality
for harmonic functions,  we have
\begin{equation}\label{bound1}
 \sup_{y\in B_{10r^{5/4}}(y_0)} u(y,t_0) \leq CC_1KMC_0r^{5/4}
\end{equation}
where $C$ is a dimensional constant.
On the other hand, due to Lemma~\ref{star-shaped} and $t_0^5\leq r^{25/6}$, we have
\begin{equation}\label{bound2}
u(\cdot,t_0) \leq 0 \hbox{ in } B_{\frac{1}{2}r^{5/4}}(y_0+r^{5/4}e_n) .
\end{equation}

Let
$$
y_1:= y_0+r^{5/4}e_n,\,\, C_2 := CC_1KMC_0,\,\,r(t) :=
\frac{1}{2}r^{5/4}- C_3(t-t_0)
$$
where $C_3 = CC_2$.
 Next we define $\phi(x,t)$ in the domain
$$
\Pi:=B_{2r^{5/4}}(y_1) \times [t_0, t_0+ \dfrac{r^{5/4}}{C_3}]
$$
such that
$$
\left\{\begin{array}{lll}
-\Delta\phi(\cdot,t) = 0 &\hbox{ in } & B_{2r^{5/4}}(y_1) -B_{r(t)}(y_1) \\ \\
\phi = 2C_2 r^{5/4} & \hbox{ on  } &  \partial B_{2r^{5/4}}(y_1)\\ \\
\phi=0 &\hbox{ in }&  B_{r(t)}(y_1).
\end{array}\right.
$$
Then by (\ref{reminder}), (\ref{bound1}) and (\ref{bound2}), $u
\prec \phi$ at $t=t_0$ in $\Pi$. Let $T_0$ be the first time where
$u$ hits $\phi$ from below in $\Pi$. Since \eqref{bound1} also holds
for any $(x,t) \in \Gamma \cap \Sigma$ in place of $(y_0, t_0)$, we
have $u< \phi$ on the parabolic boundary of $\Pi\cap \{t_0 \leq
t\leq T_0\}$.  On the other hand, if $C$ is chosen sufficiently
large, then
 $$
 \dfrac{\phi_t}{|D\phi|} =C_3 \geq  |D\phi|
 \hbox{ on } \partial B_{r(t)}(y_1)\times [t_0,t_1:=t_0+ \dfrac{r^{5/4}}{4C_3}],
 $$
 and thus $\phi$ is a supersolution of (ST). This and Theorem~\ref{thm:cp}
 applied to $u$ and $\phi$ in $\Pi$ yields a contradiction,
 and we conclude that $\Gamma(u)$ lies outside of $B_{\frac{1}{4}r^{5/4}}(y_0+r^{5/4}e_n)$
 for $t_0 \leq t\leq t_1$.
Similarly, by constructing a negative radial barrier
 and comparing it with $u$, one can show that $\Gamma(u)$
 lies outside of $B_{\frac{1}{4}r^{5/4}}(y_0-r^{5/4}e_n)$ for $t_0 \leq t\leq t_1$. Hence we conclude.

\end{proof}
We proceed to show our main result, Proposition~\ref{main1}.
The following lemmas are used in the proof of the proposition.
\vspace{5 pt}

\noindent $\bullet$ For $x_0 \in \Gamma_{t_0}$, define
$$
 t(x_0,r) := \min [ \frac{r^2}{u^+(x_0-re_n,t_0)},  \frac{r^2}{u^-(x_0+re_n,t_0)}].
 $$

\vspace{5pt}

\begin{lemma} [{\bf Harnack at later times}] \label{laterharnack}
Fix $s \in [r^{5/4}, r] $. If $(y_0, t_0)  \in \Gamma \cap \Sigma $, then
$$
u^{+}(y_0-se_n,t_0) \geq c_1u^{+}(y_0-se_n, t_0+ \tau)
$$
and
$$
u^{-}(y_0+se_n,t_0) \geq c_1u^{-}(y_0+se_n, t_0+ \tau)
$$
 for  $0\leq \tau \leq  t(y_0,s)/2$ and $c_1>0$.
\end{lemma}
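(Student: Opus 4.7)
The plan is to reduce the two-phase Harnack bound to the corresponding one-phase estimate by comparison, mirroring Lemma~\ref{lem:harnack0} but launched from the later time $t_0$ rather than $t=0$. I would let $v^+$ solve $(ST1)$ on $[t_0,\infty) \times B_R(0)$ with initial data $v^+(\cdot,t_0) = u^+(\cdot,t_0)$ and Dirichlet data $0$ on $\partial B_R(0)$. Since $u^+(\cdot,t_0) \geq u(\cdot,t_0)$ and $v^+$ is a supersolution of $(ST2)$ (the one-phase velocity $|Dv^+|$ dominates the two-phase velocity $|Du^+|-|Du^-|$), Lemma~\ref{thm:cp} gives $u \leq v^+$ on $[t_0,\infty)$.

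The next step is to verify that $v^+$ falls within the scope of Theorem~\ref{thm:ckmain} so that the one-phase Harnack inequality (Lemma 2.3 of \cite{ck}) may be applied with $t_0$ playing the role of the initial time. Lemma~\ref{star-shaped} ensures that $\Omega_{t_0}(u)$ is $t_0^5$-close to the star-shaped $\Omega_0(u)$ for $t_0 \leq t(x_0,r) \leq r^{5/6}$, so it inherits local Lipschitzness with constant close to $L$, giving (I-b). Lemma~\ref{regularity} sandwiches $u^+(\cdot,t_0)$ between two harmonic functions in an $O(r^{5/4})$-tubular neighborhood of $\Gamma_{t_0}(u)$, which together with Lemma~\ref{lem:2.11} supplies both the gradient condition (I-d) and the Laplacian bound (I-c). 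The essential additional input from the hypothesis $(y_0,t_0) \in \Gamma \cap \Sigma$ is Proposition~\ref{main1}, which enforces $u^\pm(y_0 \mp se_n, t_0)/s \leq KMC_0$ for all $s \in [r^{5/4},r]$; this keeps the constants appearing in the one-phase Harnack dimensional rather than degenerating in the rescaled geometry.

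With these hypotheses in place, the one-phase Harnack applied to $v^+$ yields
\[
v^+(y_0-se_n, t_0+\tau) \leq C\,v^+(y_0-se_n, t_0) = C\,u^+(y_0-se_n,t_0)
\]
for $0 \leq \tau \leq s^2/u^+(y_0-se_n,t_0)$. Since the two-phase time scale $t(y_0,s)$ is by definition bounded above by this one-phase time scale, the bound is valid on the stated range $0 \leq \tau \leq t(y_0,s)/2$; combined with $u^+ \leq v^+$ this gives the positive-phase inequality with $c_1 = 1/C$. The negative-phase inequality follows by the symmetric argument, comparing $u$ with the one-phase solution of $(ST1)$ starting at $t_0$ with data $u^-(\cdot,t_0)$ and boundary value $1$ on $\partial B_R(0)$.

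The main obstacle is the verification at the later time $t_0 > 0$ of the initial-data regularity demanded by the one-phase theory; this is where Lemma~\ref{star-shaped}, Lemma~\ref{regularity} and, crucially, Proposition~\ref{main1} combine. The point of restricting to $(y_0,t_0) \in \Sigma$ is precisely to exclude the local dynamics of the opposite phase which would otherwise spoil (I-c) and (I-d) for $u^+(\cdot,t_0)$ and prevent application of the one-phase regularity results.
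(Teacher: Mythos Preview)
Your proposal contains a circular dependence. Lemma~\ref{laterharnack} is one of the ingredients used \emph{in the proof of} Proposition~\ref{main1} (see the line ``Then Lemma~\ref{laterharnack} implies \ldots'' in that proof), so you cannot invoke Proposition~\ref{main1} to establish Lemma~\ref{laterharnack}. The hypothesis $(y_0,t_0)\in\Gamma\cap\Sigma$ gives you the domain $B_r(x_0)\times[0,t(x_0,r)]$ minus the bad cubes, but at this stage of the argument the gradient bound $(A)$ is not yet available.

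There is a second, independent gap: verifying (I-c) for the data $u^+(\cdot,t_0)$ is not straightforward. In the positive phase $\Delta u^+(\cdot,t_0)=u_t(\cdot,t_0)$, and you have no a~priori bound on $u_t$ at time $t_0$; Lemma~\ref{regularity} only sandwiches $u^+$ between two harmonic functions with slightly offset domains, which does not control $\Delta u^+$ pointwise.

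The paper avoids both issues by launching the one-phase comparison not from $u^+(\cdot,t_0)$ but from the harmonic approximation $h^+$ supplied by Lemma~\ref{regularity}: one takes $v^+$ to solve (ST1) with initial and lateral data $C_2 h^+(x-2se_n,\cdot)$ on a star-shaped annulus. Since $h^+$ is genuinely harmonic with a star-shaped positive set, the one-phase theory applies without any appeal to (A). The ordering $u^+\le v^+$ is obtained from the $t=0$ Harnack and backward Harnack (Lemmas~\ref{lem:harnack0} and~\ref{backwardharnack0}) together with the sandwich \eqref{reminder}, and then the one-phase Harnack for $v^+$ closes the chain back to $u^+(y_0-se_n,t_0)$ via the Harnack inequality for harmonic functions.
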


\begin{proof}

 We will show the lemma for $u^+$:  the statement on $u^-$ follows
via parallel arguments.

\vspace{12 pt}

1. Let $(y_0, t_0) \in \Gamma  \cap \Sigma$ and let $s \in [r^{5/4}, r] $. Let $h^+$ be  given as
in (\ref{reminder}). Due to Lemma~\ref{lem:harnack0} and
Lemma~\ref{backwardharnack0}, we have
\begin{eqnarray*}
h^+(y_0-2re_n,t_1) &\leq& u^+(y_0-2re_n,t_1) \\ &\leq& Cu^+(y_0-2re_n,t_2)
\leq Ch^+(y_0-(2r+r^{5/4})e_n,t_2)
\end{eqnarray*}
for $0\leq t_1,t_2 \leq t_0+t(y_0, r)/2$. (Here note that $y_0 \in
B_{r}(x_0)$.) In particular
\begin{equation}\label{insideharnack}
u^+(y_0-2re_n,t)\leq Ch^+(y_0-(2r+r^{5/4})e_n,t_0) \leq
C_1h^+(y_0-2re_n,t_0)
\end{equation}
for $t \leq t_0+t(y_0, s)/2 $.

\vspace{12 pt}

2. Now let $v^+$ solve (ST1) in $(\R^n-(1-2r)D_{t_0}) \times [t_0,
t_0+ t(y_0,s)/2]$ with initial and boundary data $C_2h^+(x-2se_n ,t)$.
Since $s \geq r^{5/4}$, (\ref{reminder}) implies
\begin{equation} \label{ob}
\Omega_t(u) \subset \Omega_{t_0}(v^+)\subset \Omega_t(v^+) \hbox{ in
}  B_{2s}(y_0)\times [t_0,t_0+t(y_0,s)/2].
\end{equation}
Then by (\ref{ob}), \eqref{insideharnack} and (\ref{reminder}),
$$u^+ \leq v^+
\hbox{ in }B_s(y_0)\times [t_0,t_0+t(y_0,s)/2]$$ if we choose $C_2$ as a
multiple of $C_1$ by a dimensional constant.
Moreover, due to the Harnack inequality for one-phase (ST1), one can
conclude that
$$
\begin{array}{lll}
u^+(y_0-se_n,t_0+\tau) &\leq&  v^+(y_0-se_n,t_0+\tau)\\ \\
& \leq& Cv^+(y_0-se_n,t_0) \\ \\
 &=&CC_2h^+(y_0-3se_n ,t_0)  \\ \\
 &\leq& C_3 h^+(y_0-se_n,t_0) \\ \\
&\leq&  C_3 u^+(y_0-se_n,t_0)
\end{array}
$$
 for  $0\leq \tau\leq \dfrac{s^2}{v^+(y_0-se_n,t_0)} \sim
t(y_0,s)/2$. Here the first inequality uses $u^+\leq v^+$, the second
uses the Harnack inequality for $v^+$, the third one uses the
Harnack inequality for harmonic functions and the last one uses
(\ref{reminder}).
\end{proof}

\begin{lemma} [{\bf Backward harnack}]\label{backward}
 Suppose that (A) holds up to time $t=T_0\leq
t(x_0,r)$. If $(y_0,t_0)\in\Gamma$ and $t_0 \leq T_0$, then for  $0
\leq \tau \leq t(y_0, s)/2$,
$$
u^{+}(y_0-se_n,t_0) \leq C u^{+}(y_0-se_n,t_0+\tau)
$$
and
$$
u^{-}(y_0+se_n,t_0) \leq C u^{-}(y_0+se_n,t_0+\tau)
$$
where $0 \leq s \leq r$ and $C$ is a universal constant.
\end{lemma}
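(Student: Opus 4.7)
The plan is to prove the inequality for $u^+$; the statement for $u^-$ then follows by applying the same argument to $-u$, which is again a solution of (ST2) by Lemma~\ref{basic:visc}(b). I would reproduce the strategy of Lemma~\ref{backwardharnack0} at time $t_0$ rather than $t=0$, with hypothesis (A) and the almost-harmonic approximation of Lemma~\ref{regularity} taking over the role that (I-c)--(I-d) played at $t=0$. Concretely, let $v^-$ solve the one-phase problem (ST1) on $[t_0,t_0+t(y_0,s)/2]$ with initial data $u^-(\cdot,t_0)$ and lateral data $1$ on $\partial B_R(0)$; by Theorem~\ref{thm:cp} we have $-v^-\leq u$, so $\{v^-=0\}\subset\{u\geq 0\}$ on this interval. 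Since (A) gives $u^-(y_0+se_n,t_0)/s\leq KMC_0$ for $r^{5/4}\leq s\leq r$, the one-phase theorem (Theorem~\ref{thm:ckmain}) applied to $v^-$ shows that $\Gamma(v^-)$ is a Lipschitz graph in space-time with controlled constants, with $|Dv^-|\sim u^-(y_0+se_n,t_0)/s$ on scale $s$.

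I would then introduce the auxiliary caloric function $v^*$ solving the heat equation in $\{v^-=0\}$ with initial data $u(\cdot,t_0)$ and zero Dirichlet data on $\Gamma(v^-)$; by the maximum principle $0\leq v^*\leq u^+$. The key estimate is a uniform lower bound on $(v^*)_t$. Since $(v^*)_t$ is itself caloric in $\{v^-=0\}$, the parabolic maximum principle reduces this to a bound on the parabolic boundary. At $t=t_0$, Lemma~\ref{regularity} (for $t_0\geq r^2$) or Lemma~\ref{backwardharnack0} combined with (I-c) (for $0\leq t_0< r^2$) gives that $u(\cdot,t_0)$ is close to a harmonic function near $\Gamma_{t_0}$, hence $(v^*)_t=\Delta v^*\geq -C$ on $\{t=t_0\}$. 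On the moving boundary $\Gamma(v^-)$, the Stefan condition for $v^-$ together with Hopf-type bounds (Lemma~\ref{lem:2.11} applied via the almost-harmonic comparison from Lemma~\ref{lem:almostharmonic}) gives
$$
(v^*)_t=-|Dv^-|\,|Dv^*|\;\geq\;-C\,\frac{u^+(y_0-se_n,t_0)\;u^-(y_0+se_n,t_0)}{s^2}.
$$

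Integrating this bound and using $\tau\leq t(y_0,s)/2\leq s^2/\bigl(2u^-(y_0+se_n,t_0)\bigr)$ yields
$$
v^*(y_0-se_n,t_0+\tau)\;\geq\;v^*(y_0-se_n,t_0)-\tfrac{C}{2}\,u^+(y_0-se_n,t_0).
$$
By Lemma~\ref{regularity} the first term on the right is comparable to $u^+(y_0-se_n,t_0)$. Arguing as in the exponent-gap step of Lemma~\ref{backwardharnack0} (and, if necessary, iterating the estimate over $O(1)$ subintervals to absorb the constant $C$) I would conclude that $v^*(y_0-se_n,t_0+\tau)$ is bounded below by a universal multiple of $u^+(y_0-se_n,t_0)$; since $u^+\geq v^*$ this gives the desired inequality.

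The main obstacle will be the Hopf-type estimate $|Dv^*|\sim u^+(y_0-se_n,t_0)/s$ on the \emph{moving} boundary $\Gamma(v^-)$, rather than at an interior point. For this one needs $v^*$ to be comparable to a harmonic function vanishing at a linear rate along $\Gamma(v^-)$, which in turn relies on the Lipschitz-in-space-time regularity of $\{v^-=0\}$ from the one-phase theorem together with Lemma~\ref{lem:almostharmonic} to reduce to the harmonic setting. A secondary technical point is maintaining the two-sided comparability $v^*\sim u^+$ at the later time $t_0+\tau$, not merely at $t_0$; Lemma~\ref{laterharnack} is the appropriate tool to control the forward evolution of $u^+$ during this step.
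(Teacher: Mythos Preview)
Your overall strategy is the same as the paper's: repeat the proof of Lemma~\ref{backwardharnack0} at the later time $t_0$, using a one-phase barrier from below for the negative phase and a caloric function in the resulting zero-set as a lower barrier for $u^+$. The gap is in your choice of initial data for these barriers.

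You take $v^-$ to have initial data $u^-(\cdot,t_0)$ and $v^*$ to have initial data $u(\cdot,t_0)$, and then argue that ``$u(\cdot,t_0)$ is close to a harmonic function near $\Gamma_{t_0}$, hence $(v^*)_t=\Delta v^*\geq -C$ at $t=t_0$.'' This inference fails: Lemma~\ref{regularity} only gives $L^\infty$-comparability of $u^\pm(\cdot,t_0)$ with harmonic functions $h^\pm(\cdot,t_0)$, and $L^\infty$-closeness says nothing about the Laplacian. At $t=t_0$ you would have $\Delta v^*(\cdot,t_0)=\Delta u(\cdot,t_0)=u_t(\cdot,t_0)$ in the positive phase, and no lower bound on $u_t$ is available at this stage of the argument. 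The same issue blocks your appeal to Theorem~\ref{thm:ckmain} for $v^-$: that theorem requires hypothesis (I-c), i.e.\ a Laplacian bound on the initial data, which you do not have for $u^-(\cdot,t_0)$.

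The paper's fix is to use the harmonic approximations $h^\pm(\cdot,t_0)$ from Lemma~\ref{regularity} \emph{as} the initial data for the barriers, rather than $u^\pm(\cdot,t_0)$. One takes $v_1$ to be the one-phase (ST1) solution with initial and lateral data $C_1 h^-(\cdot,t_0)$ (which is genuinely harmonic in a star-shaped domain, so Theorem~\ref{thm:ckmain} applies), and then $v_2$ to solve the heat equation in $\{v_1=0\}$ with initial data $h^+(\cdot,t_0)$ near the free boundary, extended by a $C^2$ function $\tilde h\leq u^+(\cdot,t_0)$ in the interior. Now $\Delta v_2(\cdot,t_0)$ is bounded by construction, and the rest of your outline (and of the proof of Lemma~\ref{backwardharnack0}) goes through unchanged. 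The comparability in \eqref{reminder} transfers the resulting lower bound on $v_2$ back to $u^+$.
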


\begin{proof}
We will  show the argument for $u^+$, due to the symmetric nature of
the claim. The argument here will be similar to that of
Lemma~\ref{backwardharnack0}, replacing the initial data $u_0^+$ and
$u_0^-$ (used in the construction of barriers) by $h^+(x,t_0)$ and
$h^-(x,t_0)$ given in (\ref{reminder}).

We consider $v_1$: a one-phase solution of (ST1) in
$$\Pi:=(1+r)\Omega_{t_0}\times [t_0, t_0+t(y_0,s)/2] $$ with
initial and lateral boundary data $C_1h^-$. Then $v_1\leq u$ in $\Pi$.
Now let $v_2$ solve the heat equation in $\{v_1=0\}\times
[t_0,t_0+t(y_0,s)/2]$ with initial data
 $$
 v_2(\cdot,t_0)=
\left\{ \begin{array}{lll}
 h^+(\cdot,t_0)&\hbox{ in }&\{v_1(\cdot,t_0)=0\} - (1-r)\{h^+(\cdot,t_0)>0\}\\ \\
 \tilde{h}(\cdot) &\hbox{ in } & (1-r)\{h^+(\cdot,t_0)>0\},
 \end{array}\right.
 $$
 where $\tilde{h}(\cdot)$ is a $C^2$ extension function of $ h^+(\cdot,t_0)$
 chosen so that $\tilde{h}(\cdot) \leq u^+(\cdot,t_0)$.
The rest of the proof is the same as that of
Lemma~\ref{backwardharnack0}.

\end{proof}

\begin{lemma} {\bf (Regularization in bad balls)} \label{reg-bad}
For a fixed $(x_0,t_0)\in \Gamma(u)$, and suppose
$$
u^+(x_0-re_n,t_0) \geq M u^-(x_0+re_n,t_0)$$ or
$$u^-(x_0+re_n,t_0) \geq M u^+(x_0-re_n,t_0)$$
for $M>M_n$, where $M_n$ is a dimensional constant. Then for $r \leq
1/M_n$, there exists a dimensional constant $C>0$ such that
$$
|\nabla u^+| \leq C\dfrac{u^+(x_0-re_n,t_0)}{r} \,\,\hbox{ and }\,\,
|\nabla u^-| \leq C\dfrac{u^-(x_0+re_n,t_0)}{r}
 $$
 in
$B_r(x_0)\times [t_0+t(x_0,r)/2, t_0+t(x_0,r)].$
\end{lemma}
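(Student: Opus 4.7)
The plan is to run the argument of Proposition~\ref{pri-reg-bad} almost verbatim, with $t=0$ replaced by $t=t_0$, using the later-time analogs of the tools that were originally available only at the initial time. Concretely, without loss of generality assume $u^{+}(x_{0}-re_{n},t_{0})\geq M\,u^{-}(x_{0}+re_{n},t_{0})$, and write $\alpha:=u^{+}(x_{0}-re_{n},t_{0})$.

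First, I would establish the two ingredients at time $t_{0}$ that played the role of the initial data in the original proof. The first ingredient is the Harnack-type equivalence
\[
u^{+}(x_{0}-re_{n},t)\sim u^{+}(x_{0}-re_{n},t_{0}),\qquad u^{-}(x_{0}+re_{n},t)\sim u^{-}(x_{0}+re_{n},t_{0})
\]
for $t_{0}\leq t\leq t_{0}+t(x_{0},r)$, which is obtained by combining Lemma~\ref{laterharnack} with the backward Harnack estimate Lemma~\ref{backward} (both available here, since we will only need this lemma when (A) holds up to time $t_{0}$ in the decomposition scheme). The second ingredient is that $u(\cdot,t_{0})$ is almost-harmonic near the free boundary: by Lemma~\ref{regularity} applied at each time in $[t_{0},t_{0}+t(x_{0},r)]$, there exist harmonic $\omega^{\pm}(\cdot,t)$ with star-shaped positive sets such that
\[
\omega^{+}(x,t)\leq u^{+}(x,t)\leq C\,\omega^{+}((1-r^{5/4})x,t)
\]
and the analogue for $u^{-}$, in $B_{r}(x_{0})\times[t_{0}+r^{2},t_{0}+t(x_{0},r)]$.

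Next, exactly as in Step~1 of the proof of Proposition~\ref{pri-reg-bad}, I would iterate the heat-equation comparison between $u$ and the two cylindrical domain problems obtained by $\pm r^{5/4}$-perturbation of $\Omega_{t_{0}}(\omega^{+})$ on slabs of thickness $r^{3/2}$. Since Lemma~\ref{lem:almostharmonic} applies to the rescaled functions and $\beta\geq 5/6$, this upgrades the closeness to harmonic functions to the sharper form
\[
\tilde{\omega}^{\pm}(x,t)\leq u^{\pm}(x,t)\leq (1+r^{a})\tilde{\omega}^{\pm}((1\mp 4r^{5/4})x,t)
\]
on the relevant slab, with a universal exponent $a>0$. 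One also needs the time-control estimate $u(\cdot,t)\leq(1+r^{b})u(\cdot,t_{0})$ on a far boundary $\partial B_{r^{-b}/2}(x_{0}+r^{-b}e_{n})\times[t_{0},t_{0}+t(x_{0},r)]$, which follows by the same argument as \eqref{overtime} once the backward Harnack Lemma~\ref{backward} is in force.

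With these two inputs at hand, the rest of the proof is structurally identical to Steps~2--5 of Proposition~\ref{pri-reg-bad}. I would perform the parabolic rescaling
\[
\tilde{u}(x,t):=\alpha^{-1}u(rx+x_{0},\,r^{2}\alpha^{-1}t+t_{0}),
\]
under which the problem becomes a singularly perturbed two-phase Stefan problem with $\tilde{u}^{+}(-e_{n},0)\sim 1$ and $\tilde{u}^{-}(e_{n},0)\sim 1/N$ for some $N\geq M$. I would then construct, in the ring domain around $\Omega_{0}(\tilde{u})$, a supersolution $U_{1}$ as the Hele--Shaw flow with data $\tilde{w}$ and lateral data $(1+r^{b})\tilde{w}$, and a subsolution $U_{2}:=U_{2}^{+}-U_{2}^{-}$ where $U_{2}^{+}(x,t):=(1-\varepsilon)\sup_{|y-x|\leq\sqrt{\varepsilon}(1-c(t))}U_{1}^{+}((1+\sqrt{\varepsilon})y,t)$ with $\varepsilon=1/N$ and $c(t)=t^{4/5}$. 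The verification that $U_{2}$ is a subsolution of (ST2), namely $\sqrt{\varepsilon}\,c'(t)\geq \varepsilon|DU_{2}^{+}|+|DU_{2}^{-}|$ on $\Gamma(U_{2})$, uses $\beta\geq 5/6$ in exactly the same way. Trapping $U_{2}\leq\tilde{u}\leq U_{1}\leq U_{2}(\cdot-\sqrt{\varepsilon}e_{n},\cdot)$ shows $\tilde{u}$ is $\sqrt{\varepsilon}$-close to a Lipschitz (and smooth) Hele--Shaw solution, and the gradient bounds follow from the iteration machinery of \cite{acs2}.

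The main obstacle is purely one of bookkeeping: verifying that the later-time Harnack estimates (Lemmas~\ref{laterharnack} and \ref{backward}) are legitimately available at $(x_{0},t_{0})$. Lemma~\ref{backward} is conditional on property (A) holding up to time $t_{0}$, so in the actual use of Lemma~\ref{reg-bad} inside the decomposition argument one must ensure this hypothesis is in force --- but on the scale $r$ under consideration here, (A) is already known by Proposition~\ref{main1}, and Lemma~\ref{laterharnack} applies whenever $(x_{0},t_{0})\in\Gamma\cap\Sigma$. Once these inputs are secured, the barrier construction and the closure of the estimates are identical to the initial-time case.
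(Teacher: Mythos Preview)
Your proposal is correct and follows exactly the approach the paper takes: the paper's own proof consists of a single paragraph stating that one repeats the argument of Proposition~\ref{pri-reg-bad} with Lemmas~\ref{laterharnack} and~\ref{backward} in place of Lemmas~\ref{lem:harnack0} and~\ref{backwardharnack0}, together with Lemma~\ref{regularity}. Your only slip is in the final paragraph, where you write that ``(A) is already known by Proposition~\ref{main1}'' --- this is circular, since Lemma~\ref{reg-bad} is invoked \emph{inside} the proof of Proposition~\ref{main1}; the correct justification (which you also state) is that in that proof one argues by contradiction assuming (A) holds up to $T_{0}$, so the hypothesis of Lemma~\ref{backward} is available at the moment Lemma~\ref{reg-bad} is applied.
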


\begin{proof}
The proof of this lemma is parallel to that of
Proposition~\ref{pri-reg-bad}. We use Harnack and backward Harnack
inequalities (Lemmas ~\ref{laterharnack} and \ref{backward}) instead
of Lemmas ~\ref{lem:harnack0} and ~\ref{backwardharnack0}. Also we
have Lemma~\ref{regularity}.
\end{proof}

\vspace{10 pt}

We are now ready to prove our main result, Proposition~\ref{main1}. Observe that (A) holds
up to some $T_0 >0$ by Lemma~\ref{measure} and
Lemma~\ref{lem:harnack0}.

\vspace{0.1 in}

\noindent {\it Proof of Proposition~\ref{main1}.}  Let $K$ be a
sufficiently large dimensional constant such that $K \gg M$.  Let us
assume that (A) breaks down for $u^+$ for the first time at $t=T_0$.
Then
\begin{equation}\label{contradiction}
\frac{u^{+}(z_0-s e_n, T_0)}{s} = KMC_0
\end{equation}
for some $(z_0,T_0) \in \Gamma \cap \Sigma$ and $r^{5/4}\leq s\leq
r$.
Let
\begin{equation}\label{definition00}
h = \sup\{h : \frac{u^{+}(z_0-k e_n, T_0)}{k} \geq M^2C_0 \hbox{ for } s \leq k \leq h \}.
\end{equation}
Note that $h<r/2$ due to Lemma~\ref{lem:harnack0} and the definition of $C_0$, and $h> 2s$ due
to Lemma~\ref{regularity}. By the definition of $h$ we have
\begin{equation}\label{condition0}
 \frac{u^{+}(z_0-h e_n, T_0)}{h} = M^2C_0.
 \end{equation}
Let us find $t_0$: the closest time before $T_0$ such that for some
$(y_0, t_0)\in \Gamma$
$$T_0-t_0 = t(y_0, h)/2 \,\,\hbox{ and }\,\, y_0/|y_0| =z_0/|z_0|. $$
Then Lemma~\ref{laterharnack} implies
$$
\frac{u^+(y_0-he_n,t_0)}{h} \sim \frac{u^+(y_0-he_n,T_0)}{h} \sim
\frac{u^+(z_0-he_n,T_0)}{h} =M^2C_0.
$$
Since $u^+(\cdot, t_0)$ and $u^-(\cdot, t_0)$ are comparable to
harmonic functions (Lemma~\ref{regularity}),
a similar argument as in
Lemma~\ref{bigbigno} implies that
$$
\dfrac{u^-(y_0+he_n,t_0)}{h} \lesssim C_0 \lesssim \frac{1}{M^2}
\frac{u^+(y_0-he_n,t_0)}{h}.
$$
Hence by Lemma~\ref{reg-bad}, we have
$$
|\nabla u^+(\cdot, T_0)| \sim M^2C_0 \hbox{ in } B_h(y_0)
$$
Since $B_s(z_0) \subset B_h(y_0)$, this would contradict
\eqref{contradiction} since $K \gg M$.

\hfill$\Box$

\section{Regularization after $t=t(r)$.}

Recall that $x_0 \in \Gamma_0$ and $r>0$ are fixed, and they satisfy
(\ref{est101}). Our goal is to prove the regularization of the free
boundary after the time $t(x_0, r)/2$ in $B_r(x_0)$. Define
$$
Q_r(x_0):=B_r(x_0) \times [t(x_0,r)/2, t(x_0,r)] \subset \Sigma.
$$
Let us briefly review the information we have on $u$ so far. Due to
Lemma 3.6 and Corollary 4.4,  our solution $u$ is $\e$-monotone in
$Q_r(x_0)$, with respect to a space and time cone, where the space
cone $W_x(e_n,\theta_0)$ satisfies $$|\theta_0-\pi| = O(L),$$ where
$L$ is the Lipschitz constant of the initial domain $\Omega_0$ given
by \eqref{initialLipschitz}. Moreover, due to Proposition 4.3, $u$
does not grow too big over time, which along with Lemma 3.8
guarantees that there is no big flux of $u$ coming in from outside
of $B_r(x_0)$ to perturb our solution.  Therefore the theory
developed in \cite{acs1}-\cite{acs2}, which says localized solutions
with flat free bondaries are smooth, applies with appropriate
modifications if we have $L$ small enough such that the waiting time
phenomena as seen in \cite{ck2} is prevented.  More precise
description of the situation as well as precise modifications are
detailed below.

 \vspace{10pt}

  As a result of
Proposition~\ref{main1}, (A) holds up to
$$
t=t(x_0,r) \leq
Cr^{2-\alpha} < r^{3/4}.
$$
Moreover $Q_r(x_0)\subset \Sigma$, and thus  Corollary~\ref{cor} and Lemma~\ref{star-shaped}, the free
boundary $\Gamma(u)$ is $r^{4/3}$-monotone in  $Q_r(x_0)$ with
respect to the time cone \\ $W_t(e_n, \tan^{-1}(1/K_1MC_0))$ and
 the space cone $W_x(e_n, \theta_0)$. Here $\theta_0$ is the
angle corresponding to the Lipschitz constant of $\Gamma_0$, and
$t(x_0, r) =\frac{r}{C_0}$.

\vspace{5pt}

On the other hand, by Lemma~\ref{lem:harnack0} and the definition of
$C_0$,
$$
\frac{u(x_0-re_n,\frac{t(x_0,r)}{2})}{C_0r} \sim 1.$$ Since
$Q_r(x_0) \subset \Sigma$, Proposition~\ref{main1} implies
$$
\dfrac{u(x,t)}{C_0r}\lesssim KM \,\,\hbox{ in }\,\,B_r(x_0) \times
[t(x_0,r)/2, t(x_0,r)].
$$
Motivated from the above estimates, we consider the re-scaled
function
$$
\tilde{u}(x,t):=\frac{1}{C_0r}u(rx+x_0,r^2t +\frac{t(x_0,r)}{2}).
$$

The main difficulty in applying the Method of [ACS]-[ACS2] lies in
the fact that  we cannot guarantee the $\e$-monotonicity of the
solution $u$ in time variable (although we can obtain, as above, the
$r^{4/3}$-monotonicity of the free boundary $\Gamma(u)$). In
[ACS]-[ACS2], it was important that initially  the time derivative
of the solution was assumed to be controlled by the spatial
derivative, i.e.,
\begin{equation}\label{00}
|u_t| \leq C(|Du^+|+|Du^-|).
\end{equation}
 Using \eqref{00} one can prove that the {\it direction vectors}
$$
\frac{Du^+}{|Du^+|}(-le_n,t), \quad \frac{Du^-}{|Du^-|}(le_n,t)
$$
 do not change  much for $ 0\leq t\leq l$. This is pivotal in regularization
 procedure since then $\Gamma(u)$ regularizes along the direction of
 the``common gain" obtained by those two direction vectors,
 the regularity of $\Gamma(u)$ then makes above two vectors line
 up better in a smaller scale, which contributes to further regularization
 of $\Gamma(u)$ in a finer scale.
 In our case we do not have \eqref{00}, which requires an extra care
in showing that the vectors do not change their directions too
rapidly.

\vspace{10pt}

$\circ$ {\it $\e$-monotonicity of $\Gamma(\tilde{u})$ to full
monotonicity of $\tilde{u}$}

\vspace{10pt}

First we prove that the $\e$-monotonicity of $\Gamma(\tilde{u})$
improves to Lipschitz continuity. Let $a=C_0r$. Then in the domain
$B_1(0)\times [-\frac{1}{a}, \frac{1}{a}]$, $\tilde{u}(x,t)$ solves
$$
\left\{\begin{array}{lll}
\tilde{u}_t-\Delta \tilde{u}=0 &\hbox{ in } &\{\tilde{u}>0\} \\ \\
V=a(|D\tilde{u}^+|-|D\tilde{u}^-|) &\hbox{ on }
&\partial\{\tilde{u}>0\}.
\end{array}\right.
$$
 Here note that $r^{7/6}\leq r^{\alpha}\leq a \leq
r^{\beta} \leq r^{5/6}$. In this scale, since $\tilde{u}$ is Caloric
and $\Gamma(\tilde{u})$ is $r^{1/3}$-close to a Lipschitz graph in
space and time, it follows that so does $\tilde{u}$ in
$B_{1/2}(0)\times [-\frac{1}{a}+1, \frac{1}{a}]$.

\vspace{5pt}

Note that in above step we are losing a lot of
information over time: $\Gamma(\tilde{u})$ is in fact
$r^{1/3}$-close to a Lipschitz graph moving very slow in time, but
this does not guarantee that $\tilde{u}$ also changes slowly in time.

\vspace{5pt}

We then follow the iteration process in Lemma 7.2 of [ACS]  to show the
following:
\begin{lemma}
If $r$ is sufficiently small, then  there exists $0<c,d<1/2$ such
that the following is true:
  $\tilde{u}$ is $\lambda r^{1/3}$-monotone in the cone of directions
$W_x(\theta_x-r^{d},e_n)$ and $W_t(\theta_t-r^{d},\nu)$ in the
domain $B_{1-r^{c}}(0)\times [\frac{(-1+r^c)}{a},\frac{1}{a}]$.

 \label{lem:flat}
\end{lemma}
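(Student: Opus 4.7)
The plan is to adapt the finite-step ACS iteration of Lemma 7.2 of \cite{acs1} to our two-phase, parabolically rescaled setting, taking as input the $r^{1/3}$-closeness of $\Gamma(\tilde u)$ to a space-time Lipschitz graph obtained from Corollary~\ref{cor}, Lemma~\ref{star-shaped} and Proposition~\ref{main1}. The first step is to convert this geometric flatness of $\Gamma(\tilde u)$ into spatial monotonicity of the function $\tilde u^\pm$. In the rescaled variables, $\Omega(\tilde u)$ is $Lip^{1,1/2}$ up to an $r^{1/3}$ error, so Lemmas~\ref{lem:fgs84} and~\ref{lem:almostharmonic} ensure that $\tilde u^\pm(\cdot,t)$ is pointwise comparable to a positive harmonic function on each time slice. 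Combined with the nondegeneracy and boundary H\"older estimates (Lemmas~\ref{lem:JK}, \ref{lem:2.11}) and the almost-harmonic sandwich of Lemma~\ref{regularity}, this yields $\lambda r^{1/3}$-monotonicity of $\tilde u^\pm$ in the spatial cone $W_x(\theta_x - r^d, e_n)$ on $B_{1-r^c}(0)$, where the cone aperture is shrunk by $r^d$ to absorb the harmonic-to-caloric error.

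To obtain temporal monotonicity I would use sup-convolution in time. By Corollary~\ref{cor}, $\Gamma(\tilde u)$ moves at most $r^{5/4}$ over intervals of rescaled length $r^{5/4}/(K_1 M C_0)$, so after the parabolic rescaling it is a genuine Lipschitz graph in time with a universally bounded slope. Applying Theorem~\ref{thm:cp} separately in $\{\tilde u>0\}$ and $\{\tilde u<0\}$ to compare $\tilde u(x,t)$ with the time-shift $\tilde u(x,t-s)$ for $s \geq \lambda r^{1/3}$, and using the star-shaped ordering on the lateral boundary (Lemma~\ref{star-shaped}) to set up the comparison, delivers $\lambda r^{1/3}$-monotonicity in the temporal cone $W_t(\theta_t - r^d, \nu)$. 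Iterating the one-step cone improvement a polynomial-in-$r^{-d}$ number of times, each losing at most $O(r^c)$ of the spatial and $O(r^c/a)$ of the temporal domain, produces the stated conclusion on $B_{1-r^c}(0)\times[(-1+r^c)/a, 1/a]$. The parameters $c, d < 1/2$ can be chosen so that the polynomial losses are absorbed by the base scale $r^{1/3}$.

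The main obstacle, as the text preceding the lemma emphasizes, is the absence of the bound \eqref{00}: because the rescaled heat operator carries the small coefficient $a \leq r^{5/6}$ in front of $\partial_t$, $|\tilde u_t|$ may dominate $|D\tilde u^\pm|$, and the ACS device that propagates the unit normals $D\tilde u^\pm / |D\tilde u^\pm|$ through a time-derivative estimate is unavailable. The substitute is precisely the sup-convolution above, which extracts temporal monotonicity of $\tilde u$ from the geometric regularity of $\Gamma(\tilde u)$ rather than from a gradient ratio. A further subtlety is that the cone-improvement step must be run simultaneously in both phases; Proposition~\ref{main1} guarantees two-sided gradient control of comparable magnitude on either side of $\Gamma$, so the Harnack chains used in the improvement cross $\Gamma$ without losing a geometric factor, and both phases inherit the same updated cone.
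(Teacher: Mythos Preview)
The paper gives essentially no proof of this lemma: the entire argument is the sentence ``We then follow the iteration process in Lemma 7.2 of [ACS]'' immediately preceding the statement. Your proposal is aligned with this---you invoke the same ACS iteration, feeding in the $r^{1/3}$-flatness of $\Gamma(\tilde u)$ from Corollary~\ref{cor} and Lemma~\ref{star-shaped}, and the caloric/almost-harmonic machinery of Lemmas~\ref{lem:almostharmonic} and~\ref{regularity} to pass from boundary flatness to $\epsilon$-monotonicity of $\tilde u^\pm$. In that sense your approach and the paper's coincide.

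Two comments on the details you add beyond the paper's one-line citation. First, you place the obstacle of the missing bound~\eqref{00} at this lemma and devise a sup-convolution workaround; in the paper that obstacle is flagged in the paragraph before the lemma but is actually confronted \emph{after} it, in Lemma~\ref{upperbound101}, via a direct caloric-measure estimate on $\tilde u_t$. For Lemma~\ref{lem:flat} itself the paper is content to run the ACS scheme at the level of $\epsilon$-monotonicity, which requires only the geometry of $\Gamma(\tilde u)$, not control of $\tilde u_t$. Second, your temporal-monotonicity step is not quite right as written: you invoke Lemma~\ref{star-shaped} to order $\tilde u(\cdot,t)$ against its time-shift on the lateral boundary, but that lemma supplies a combined space-time inequality~\eqref{starshaped} (a spatial dilation paired with a time advance), not a pure time comparison; and Theorem~\ref{thm:cp} is the Stefan comparison, whereas inside each phase you want the parabolic maximum principle. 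The correct route, implicit in the paper's appeal to ACS, is that once $\Gamma(\tilde u)$ is $r^{1/3}$-close to a Lipschitz graph in both $x$ and $t$, the $\epsilon$-monotonicity of the caloric function $\tilde u$ in the corresponding space-time cone follows from the boundary Harnack/Dahlberg machinery (Lemmas~\ref{lem:fgs84}--\ref{lem:acs96}) applied in the $Lip^{1,1}$ domain, exactly as in \cite{acs1}; no separate sup-convolution is needed at this stage.
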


One can then iterate above lemma to improve the $\e$-monotonicity to
full monotonicity, and state the result in terms of $\tilde{u}$:
\begin{lemma}
$\tilde{u}$ is fully monotone in $B_{1/2}(0)\times [0,\frac{1}{a}] $ for the cone
$$
\mathcal{C}_1:=W_x(\theta_x-r^{d},e_n)\cup W_t(\theta_t-r^{d},\nu),
$$
for some constant $0<d<1/2$.
\end{lemma}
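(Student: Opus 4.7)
The plan is to bootstrap Lemma~\ref{lem:flat} by repeated rescaling until the $\varepsilon$-monotonicity defect is driven to zero. Set $\varepsilon_0 = \lambda r^{1/3}$ and let $W_x^{(0)} = W_x(\theta_x - r^d, e_n)$, $W_t^{(0)} = W_t(\theta_t - r^d, \nu)$ be the cones given by Lemma~\ref{lem:flat} in the domain $D_0 := B_{1-r^c}(0) \times [(-1+r^c)/a, 1/a]$. The idea is to choose a fixed $\mu \in (0, 1/2)$ and, at step $k$, apply Lemma~\ref{lem:flat} to the rescaled function
$$
\tilde u_k(x,t) := \mu^{-k}\,\tilde u(\mu^k x + y_k,\, \mu^{2k}t + s_k)
$$
based at appropriately chosen $(y_k, s_k)$ inside $D_k \subset D_{k-1}$. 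Because the heat equation, the free boundary condition, and the hypotheses (regularization in bad balls, Harnack-type controls, monotonicity of $\Gamma(\tilde u)$) are all invariant under parabolic scaling, $\tilde u_k$ satisfies the same structural hypotheses as $\tilde u$ at the smaller scale $r_k := \mu^k r$.

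The quantitative bookkeeping is the substance of the argument. At each step Lemma~\ref{lem:flat} replaces the defect $\varepsilon_k = \lambda^k \varepsilon_0$ by $\varepsilon_{k+1} = \lambda \varepsilon_k$, while the cone apertures shrink by $r_k^d = r^d \mu^{kd}$ and the domain by a factor $1 - r_k^c$. Choosing the initial exponents so that $r^d < 1$ and $\mu^{d} < 1$, the total cone loss satisfies
$$
\sum_{k=0}^{\infty} r_k^d \;=\; r^d \sum_{k=0}^{\infty} \mu^{kd} \;=\; \frac{r^d}{1-\mu^d} \;\leq\; 2 r^d,
$$
and similarly for the domain loss. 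After $k$ steps we therefore obtain $\varepsilon_k = \lambda^k \lambda r^{1/3}$-monotonicity in cones $W_x(\theta_x - 2r^d, e_n)$, $W_t(\theta_t - 2r^d, \nu)$ on the shrunken domain $D_k \supset B_{1/2}(0) \times [0, 1/a]$. Letting $k \to \infty$ forces $\varepsilon_k \to 0$, which is precisely full monotonicity in the stated cones; a relabeling of $d$ (say $d/2$) absorbs the factor $2$.

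The main obstacle is ensuring that the hypotheses of Lemma~\ref{lem:flat} genuinely transfer to the rescaled function $\tilde u_k$ at every stage. This requires three ingredients: (i) that $\tilde u_k$ remains Caloric with free boundary $\Gamma(\tilde u_k)$ still $\varepsilon_k$-monotone in a cone of space-time directions (which is immediate from the scaling invariance), (ii) that the distance estimate Corollary~\ref{cor} and the Harnack-type bounds from Section 3--4 continue to hold at scale $r_k$ (which follows since the original statements are scale-invariant within $\Sigma$ and $r_k \le r \le d_0$), and (iii) that the normalization $\tilde u_k(-e_n, 0) \sim 1$ survives at each stage, which is guaranteed by the Harnack inequality Lemma~\ref{laterharnack}. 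Once these three invariances are verified, one step of Lemma~\ref{lem:flat} can be reapplied verbatim to $\tilde u_k$, and the geometric series estimates above conclude the proof.
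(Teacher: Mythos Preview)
Your bookkeeping for the cone and the geometric-series control of the total aperture loss is the right idea, but the rescaling mechanism you propose does not deliver monotonicity on the fixed region $B_{1/2}(0)\times[0,1/a]$. If you literally pass to
\[
\tilde u_k(x,t)=\mu^{-k}\tilde u(\mu^k x+y_k,\mu^{2k}t+s_k),
\]
then one application of Lemma~\ref{lem:flat} to $\tilde u_k$ yields $\lambda r_k^{1/3}$-monotonicity of $\tilde u_k$ only in $B_{1-r_k^c}(0)$, which, after undoing the rescaling, corresponds to a ball of radius $\mu^k(1-r_k^c)$ about $y_k$ in the original $\tilde u$-coordinates. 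Thus $D_k$ has spatial size $\sim\mu^k\to 0$, not $\supset B_{1/2}(0)$ as you assert; the iteration collapses to a single point. A secondary issue is that the free-boundary law for $\tilde u$ reads $V=a(|D\tilde u^+|-|D\tilde u^-|)$ with $a=C_0r$, and under your parabolic rescaling the coefficient becomes $\mu^k a$; together with the fact that $C_0$ itself depends nontrivially on $r$, the claim that ``the hypotheses are invariant under parabolic scaling'' is not accurate in the form you need.

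The paper (following Lemma~7 of \cite{acs1}) iterates Lemma~\ref{lem:flat} \emph{without} rescaling. The content of Lemma~\ref{lem:flat} is that $\varepsilon$-monotonicity of $\tilde u$ in a cone on a domain implies $\lambda\varepsilon$-monotonicity of the \emph{same} function $\tilde u$ in a cone shrunk by $\varepsilon^{3d}$ on a domain shrunk by $\varepsilon^{3c}$ (since $r=\varepsilon^3$). Setting $\varepsilon_k=\lambda^k r^{1/3}$ and applying the lemma repeatedly to $\tilde u$ itself, the cumulative cone loss is $\sum_k \varepsilon_k^{3d}=r^d/(1-\lambda^{3d})$ and the cumulative domain loss is $r^c/(1-\lambda^{3c})$, both small for small $r$; hence $\tilde u$ is $\varepsilon_k$-monotone on a region containing $B_{1/2}(0)\times[0,1/a]$ for every $k$, and letting $k\to\infty$ gives full monotonicity there. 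Your geometric-series estimate is exactly this computation, but it belongs to the fixed-function iteration, not to a zoom-in argument.
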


\vspace{10pt}

 $\circ$ {\it Further regularity in space}

 \vspace{10pt}

Now we suppose $\tilde{u}$ is  Lipschitz in space and time. Then in
particular, we have the Lipschitz regularity of $u$ in space (and
very weak Lipschitz regularity of $u$ in time.)  We are interested
in proving the following type of statement:
\begin{lemma} [enlargement for the cone of monotonicity]
There exists $\lambda>0$ such that the following holds: Suppose
$\tilde{u}$ is Lipschitz with respect to the cone of monotonicity
$\Lambda_x(e_n,\theta_0)$ in $B_1(0)\times
[-\frac{1}{a},\frac{1}{a}]$. Then in the half domain
$B_{1/2}(0)\times [-\frac{1}{2a}, \frac{1}{2a}]$,  $\tilde{u}$ is
Lipschitz with respect to the cone of monotonicity $\Lambda_x(\nu,
(1+\lambda)\theta_0)$ with some unit vector $\nu$.
\end{lemma}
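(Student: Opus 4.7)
My plan is to adapt the cone-enlargement method of Athanasopoulos--Caffarelli--Salsa (\cite{acs1}--\cite{acs2}) to the present two-phase setting, along the same lines used in \cite{cjk1}--\cite{cjk2} for the one-phase problem. Since $\tilde u$ is spatially Lipschitz and monotone in every direction $e\in\Lambda_x(e_n,\theta_0)$, each directional derivative $\partial_e \tilde u$ is a nonnegative caloric function in both $\Omega^+(\tilde u)$ and $\Omega^-(\tilde u)$, vanishing continuously on $\Gamma(\tilde u)$. The goal is to extract a strictly positive fraction of monotonicity along a slightly enlarged cone of directions, centered at a possibly rotated axis $\nu$.

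First I would prove an interior gain. Fix reference points $x_\pm \in B_{1/2}(0)\times[-1/(2a),1/(2a)]$ with $d(x_\pm,\Gamma(\tilde u))\gtrsim 1$, one in each phase. Interior Harnack applied to $\partial_e \tilde u$ and $\partial_{e_n}\tilde u$ yields, using $e\cdot e_n\ge\cos\theta_0$,
\begin{equation*}
\partial_e \tilde u(x_\pm,t) \ge c_0(\theta_0)\,\partial_{e_n}\tilde u(x_\pm,t)
\end{equation*}
for every $e$ strictly inside the cone. Second, I would propagate this to a neighborhood of $\Gamma(\tilde u)$ via the boundary Harnack principle of Lemma~\ref{lem:fgs84} applied separately in each phase (valid since $\Omega^\pm(\tilde u)$ is $Lip^{1,1/2}$ by Proposition~\ref{main1} and Corollary~\ref{cor}), with the H\"older continuity of the ratio of caloric functions (Lemma~\ref{lem:JK}) giving
\begin{equation*}
\partial_e \tilde u(x,t)\ge c_1\,\partial_{e_n}\tilde u(x,t) \quad \text{in } B_{1/2}(0)\times[-1/(2a),1/(2a)].
\end{equation*}

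Third, I would enlarge the aperture geometrically: decompose any candidate direction $\tau$ with $\tau\cdot\nu\ge\cos((1+\lambda)\theta_0)$ as $\tau=\alpha\,e+\beta\,\eta$, where $e\in\Lambda_x(e_n,\theta_0)$, $\eta\perp e$ and $|\beta|\lesssim\lambda\sin\theta_0$. Combining the previous step with the Lipschitz bound $|\partial_\eta \tilde u|\le C\,\partial_{e_n}\tilde u$ yields
\begin{equation*}
\partial_\tau \tilde u = \alpha\,\partial_e\tilde u + \beta\,\partial_\eta\tilde u \ge (\alpha c_1 - C|\beta|)\,\partial_{e_n}\tilde u \ge 0
\end{equation*}
once $\lambda$ is smaller than a universal constant, producing the enlarged cone. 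The axis $\nu$ is then chosen as the direction averaging the monotonicity gain observed in the two phases.

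The main obstacle is the second step: the boundary Harnack constant $c_1$ must stay uniform up to the common free boundary $\Gamma(\tilde u)$, whereas the two-phase flux competition could in principle force $c_1\to 0$. The Alt--Caffarelli--Friedman monotonicity formula (Lemma~\ref{monotonicityformula}) together with the uniform flux control from Proposition~\ref{main1} prevents simultaneous concentration of $|D\tilde u^+|$ and $|D\tilde u^-|$ at the same boundary point. Hence at every free boundary point one is either in a \emph{one-phase-dominated} regime, where Proposition~\ref{pri-reg-bad} and Lemma~\ref{reg-bad} apply and the argument reduces to the one-phase analysis of \cite{cjk1}--\cite{ck}, or in a \emph{balanced} regime with comparable positive and negative fluxes, in which case the boundary Harnack constants in the two phases are comparable to each other; this dichotomy is precisely the content of the decomposition carried out in Section~4, and is what allows $c_1$ to be chosen independently of the particular free boundary point.
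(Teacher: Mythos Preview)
Your outline follows the standard Athanasopoulos--Caffarelli--Salsa cone-enlargement scheme, and the two-phase alignment issue you flag (handled via Lemma~\ref{monotonicityformula} and the Section~4 decomposition) is real. However, there is a genuine gap that the paper singles out as \emph{the} main obstacle, and your argument does not address it.

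The domain $B_1(0)\times[-1/a,1/a]$ is hyperbolic: its time length is $\sim 1/a\gg 1$, not $\sim 1$. Every parabolic tool you invoke---interior Harnack for the caloric functions $\partial_e\tilde u$, the boundary Harnack of Lemma~\ref{lem:fgs84}, the backward Harnack of Lemma~\ref{lem:acs96}---controls ratios only on a unit parabolic time window. Iterating $\sim 1/a$ times destroys the constants. More concretely, your ``interior gain'' identifies a preferred direction $\nu(t)=D\tilde u/|D\tilde u|$ at an interior reference point at time $t$, and your boundary-Harnack step propagates the gain at that time. But the lemma asks for a \emph{single} axis $\nu$ valid for all $t\in[-1/(2a),1/(2a)]$; nothing in your argument prevents $\nu(t)$ from rotating by order one over that long interval, in which case no common enlarged cone exists. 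Your final sentence ``$\nu$ is chosen as the direction averaging the monotonicity gain'' hides exactly this issue.

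The paper's fix is Lemma~\ref{upperbound101}: by splitting $\tilde u_t$ into a boundary contribution (controlled via the free-boundary law and caloric measure, as in Lemma~8.3 of \cite{cjk1}) and an initial-data contribution (controlled by heat-kernel decay over time $\sim 1/a$), one obtains $|\tilde u_t|\le a|D\tilde u|^2\le Ca$ at interior points. Integrating over $[-1/(2a),1/(2a)]$ then shows that $D\tilde u$ at the interior reference point varies by at most $O(1)$ relative to $|D\tilde u|$ over the whole time span, so the direction vector is essentially frozen. This is what replaces the hypothesis $|u_t|\le C(|Du^+|+|Du^-|)$ assumed in \cite{acs1}--\cite{acs2}; without it the cone-enlargement iteration cannot be run over the required time interval. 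After establishing Lemma~\ref{upperbound101}, the paper rescales hyperbolically to $v(x,t)=\frac{1}{C_0r}u(rx+x_0,\frac{r}{C_0}t+1)$ and then proceeds essentially as in Section~6 of \cite{cjk2}.
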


To prove the enlargement of the cone,  we take a closer look at the
change of $\tilde{u}$ over time, in the interior region.  More
precisely, we need the following lemma which follows  the approach
taken in \cite{cjk1} and \cite{cjk2}.

\begin{lemma}\label{upperbound101}
$$
|\tilde{u}_t| \leq a|D\tilde{u}|^2 \leq Ca \hbox{ in } [B_{1/2}(e_n) \cup
B_{1/2}(-e_n)] \times [-1/2a,1/2a],
$$
where $C$ is a dimensional constant.
\end{lemma}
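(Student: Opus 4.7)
My plan is to exploit a Bernstein-type identity for the heat equation together with the free boundary condition, leveraging the balanced regime \eqref{est101} that prevails throughout section~5.

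Since $\tilde u$ is caloric in $\{\tilde u > 0\}$, differentiating the heat equation in time shows that $\tilde u_t$ is itself caloric there.  Moreover, the Bochner identity yields $(\partial_t - \Delta)|D\tilde u|^2 = -2|D^2\tilde u|^2 \leq 0$, so $|D\tilde u|^2$ is subcaloric.  Consequently both functions $\psi_\pm := \pm\tilde u_t - a|D\tilde u|^2$ satisfy
\[
(\partial_t - \Delta)\psi_\pm = 2a|D^2\tilde u|^2 \geq 0
\]
in $\{\tilde u>0\}$, i.e.\ each is itself a subsolution of the heat equation.  The desired pointwise bound $|\tilde u_t|\leq a|D\tilde u|^2$ is equivalent to $\psi_+\leq 0$ and $\psi_-\leq 0$, which I plan to obtain via the maximum principle applied to $\psi_\pm$ in $\{\tilde u>0\}\cap B_1(0) \times [-1/(2a),1/(2a)]$.

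On the lateral boundary $\Gamma(\tilde u)$, the rescaled kinematic condition reads $\tilde u_t = a(|D\tilde u^+| - |D\tilde u^-|)|D\tilde u^+|$.  This gives $\psi_+ = -a|D\tilde u^+||D\tilde u^-| \leq 0$ immediately, whereas $\psi_- = a(2|D\tilde u^+| - |D\tilde u^-|)|D\tilde u^+|$ has a sign that a priori depends on the ratio of the two gradients.  Here the balanced hypothesis \eqref{est101} (together with Proposition~\ref{main1} and a Dahlberg-type boundary comparison valid inside $\Sigma$) forces $|D\tilde u^+|$ and $|D\tilde u^-|$ to be comparable along $\Gamma\cap B_1(0)$, so $\psi_-\leq 0$ on $\Gamma$ as well.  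The parallel argument in $\{\tilde u<0\}$ handles $B_{1/2}(e_n)$.

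The main obstacle is controlling $\psi_\pm$ on the initial slice $t=-1/(2a)$, which corresponds to the original initial time $t=0$.  Assumption (I-c) alone only gives $|\tilde u_t(\cdot,-1/(2a))| = (r/C_0)|\Delta u_0| \leq N_0 r/C_0$, which may well exceed $Ca = C C_0 r$ in the extreme case $C_0 \sim r^{\alpha-1}$.  My plan is to postpone the starting time of the maximum principle by a short warm-up and to invoke Lemma~\ref{regularity}: once $\tilde u$ has become almost harmonic in each of its phases, $|\Delta\tilde u|$ drops to lower order than $a|D\tilde u|^2$, giving $\psi_\pm\leq 0$ at the new initial slice and closing the maximum principle.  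The second inequality $a|D\tilde u|^2 \leq Ca$ is then immediate from the Lipschitz bound $|D\tilde u|\leq C$ supplied by the full monotonicity of $\tilde u$ in the cone $\mathcal{C}_1$ established earlier in this section.
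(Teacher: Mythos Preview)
Your Bernstein/maximum-principle route is genuinely different from the paper's argument, but it has two real gaps.

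\medskip

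\textbf{The sign of $\psi_-$ on $\Gamma$.}  From the free-boundary relation you get
\[
\psi_- \;=\; -\tilde u_t - a|D\tilde u^+|^2 \;=\; a\,|D\tilde u^+|\bigl(|D\tilde u^-|-2|D\tilde u^+|\bigr)
\]
on $\Gamma$.  For this to be $\le 0$ you need the \emph{pointwise} inequality $|D\tilde u^-|\le 2|D\tilde u^+|$ along the free boundary.  Mere comparability $|D\tilde u^+|\sim|D\tilde u^-|$ (which is all that \eqref{est101}, Proposition~\ref{main1}, and boundary Harnack give) does not yield a ratio $\le 2$; the implied constants there involve $K$ and $M$, both large.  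Worse, at this stage $\Gamma$ is only Lipschitz, so the nontangential limits of $|D\tilde u^\pm|$ may be unbounded and a pointwise ratio bound on $\Gamma$ is not available.  Inserting a constant and aiming instead for $-\tilde u_t\le Ca|D\tilde u|^2$ does not fix this, because you would still need a pointwise bound $|D\tilde u^-|\le (1+C)|D\tilde u^+|$ everywhere on $\Gamma$.

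\medskip

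\textbf{The initial slice.}  Your plan to ``warm up'' via Lemma~\ref{regularity} is circular: that lemma says $u^\pm$ is trapped multiplicatively between two nearby harmonic functions, which gives no pointwise smallness for $\Delta\tilde u$.  But $\Delta\tilde u=\tilde u_t$ is exactly the quantity you are trying to control, so ``almost harmonic'' in the sense of Lemma~\ref{regularity} cannot bootstrap the bound $\psi_\pm\le 0$ at a later initial time.

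\medskip

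\textbf{How the paper proceeds instead.}  The paper avoids both issues by splitting $\tilde u_t=v_1+v_2$ in the positive phase: $v_1$ carries the free-boundary data $a|D\tilde u^+|(|D\tilde u^+|-|D\tilde u^-|)$ and zero initial data, while $v_2$ carries the initial data $\tilde u_t(\cdot,-1/a)$ and zero lateral data.  For $v_1$ one writes the caloric-measure representation and uses the absolute continuity of caloric measure with respect to harmonic measure in Lipschitz cylinders (as in Lemma~8.3 of \cite{cjk1}); the comparison $|D\tilde u^\pm|\sim 1$ is then only needed at the interior reference points $\pm e_n$, and boundary Harnack transfers it, giving $|v_1|\lesssim a|D\tilde u|^2$ without any pointwise ratio hypothesis on $\Gamma$.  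For $v_2$ one dominates by the free-space heat kernel started at $t=-1/a$; the long elapsed time $\sim 1/a$ produces a factor $O(a)$ directly.  Your maximum-principle scheme has no analogue of either mechanism.
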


\begin{proof}

1. The proof is similar to that of Lemma 8.3 of \cite{cjk2}. Note that
$\tilde{u}_t$ is a caloric function in $\Omega^+(\tilde{u})$ and
$\Omega^-(\tilde{u})$. Let us prove the lemma for $\tilde{u}^+$,
since parallel arguments apply to $\tilde{u}^-$.

2. We divide $\tilde{u}_t$ into two parts. More precisely, let
$$
\tilde{u}_t = v_1+v_2,
$$
where  both $v_1$ and $v_2$ are caloric in $\Omega^+(\tilde{u})$,
$v_1$ has initial data zero and the boundary data
$a|D\tilde{u}^+|(|D\tilde{u}^+|-|D\tilde{u}^-|)$ on
$\Gamma(\tilde{u})$, and $v_2$ has the initial data
$\tilde{u}_t(\cdot,-1/a)$  and the boundary data zero on
$\Gamma(\tilde{u})$.

3. As for $v_1$, we need to use the absolute continuity of the
caloric measure with respect to the harmonic measure, as well as the
Lipschitz continuity of the free boundary. we proceed as in Lemma
8.3 of \cite{cjk1}. Note that we have
$$
|D\tilde{u}^+| \sim |D\tilde{u}^-| \sim 1
$$
in   $[B_{1/2}(e_n) \cup B_{1/2}(-e_n)] \times [-1/a,1/a]$:  this
follows from the assumption (\ref{est101}), and
Lemmas~\ref{lem:harnack0} and \ref{backwardharnack0}. Therefore we
can proceed as in Lemma 8.3 of \cite{cjk1} to obtain
$$
v_1(x,t) \leq a\int _{\Gamma(\tilde{u})\cap\{-1/a\leq s\leq t\}}
|D\tilde{u}^+|^2 d\omega^{(x,t)} \leq a|D\tilde{u}|^2(x,t)
$$
where $\omega^{(x,t)}$ is the caloric measure for $\Omega(\tilde{u})$.

$$
v_1(x,t) \geq a\int _{\Gamma(\tilde{u})\cap\{-1/a\leq s\leq t\}}
-|D\tilde{u}^-|^2 d\omega^{(x,t)} \geq -a|D\tilde{u}|^2(x,t).
$$

4. As for $v_2$, we conclude that it must be smaller than that of
caloric function solved in the whole domain with the absolute value
of its initial data. The advantage is that then we can use the heat
kernel. Note that  the initial data is given at $t=-1/a$ and has a
compact support. The initial data is given by $v_t \leq
\frac{C}{a}v_{e_n}$, where $v_{e_n}(x,t)$ is comparable to the
derivative of harmonic function in Lipschitz domain.

Therefore the heat kernel representation is given as
$$
\dfrac{1}{(t+1/a)^{\frac{n}{2}+1}}\int |x_n-y_n|\exp^{-|x-y|^2/(t+1/a)} v(y,-1/a) dy.
$$
Since $t\in [0,1/a]$, and $k\exp^{-a k^2} \leq C\exp^{-\frac{a}{2}k^2}$,  we get the effect of $O(a)$.

\end{proof}

Now we change the scale, and consider the function

\begin{equation}\label{definition101}
v(x,t):= \frac{1}{C_0r}u(rx+x_0, \frac{r}{C_0}t+1)
\end{equation}

Then this function is Lipschitz continuous, in space and time, {\it
away } from the free boundary. The following lemma suggests that the
cone of monotonicity improves away from the free boundary, as we
look at smaller scales. The proof is parallel to that of Lemma 8.4  in \cite{acs2}.

\begin{lemma}
  Let $v$ given by \eqref{definition101}. Suppose that there
 exists constants $\delta>0$ and $0\leq A \leq B, \mu:= B-A$ such that
 $$
 \alpha(Dv, -e_n) \leq \delta \hbox{ and } A \leq \dfrac{v_t}{-e_n\cdot Dv} \leq B
 $$
in $B_{1/6}(-\frac{3}{4}e_n)\times (-\delta/\mu, \delta/\mu)$ with
$\frac{\delta}{\mu}<r$. Then there exist a unit vector $\nu\in\R^n$
and positive constants $r_0, b_0<1$ depending only on $A$, $B$ and
$n$ such that
$$
\alpha(Dv(x,t),\nu) \leq b_0\delta \hbox{ in }
B_{1/8}(-\frac{3}{4}e_n) \times (-r_0\frac{\delta}{\mu},
r_0\frac{\delta}{\mu}).
$$

\end{lemma}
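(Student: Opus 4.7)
The plan is to exploit the temporal control $v_t \in [A,B](-e_n\cdot Dv)$ to ``borrow'' monotonicity from the time direction and enlarge the spatial cone via an interior improvement-of-flatness argument for the caloric function $v$. Since the cylinder $B_{1/6}(-\tfrac{3}{4}e_n)\times(-\delta/\mu,\delta/\mu)$ sits well inside the positive phase, $v$ is caloric there, and so are each component $\partial_i v$ and $v_t$. Writing $w:=-\partial_n v$, the hypothesis $\alpha(Dv,-e_n)\leq\delta$ says that the tangential gradient satisfies $|\nabla' v|\leq C\delta\,w$, and the Harnack inequality for the nonnegative caloric function $w$ gives $w\sim w_0$ throughout a smaller cylinder.

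The central observation is that the two nonnegative caloric functions $v_t-Aw$ and $Bw-v_t$ together pin down $v_t$ within a spread of size $\mu w_0$, and the relevant parabolic scale at which this spread is ``felt'' is precisely $\delta/\mu$. For a candidate tilted direction $\nu=\cos\varphi\,(-e_n)+\sin\varphi\,\tau$ with $\tau\perp e_n$, one has $\nu\cdot Dv=\cos\varphi\cdot w+\sin\varphi\cdot(\tau\cdot\nabla' v)$. The idea is to use the identity $v(x+h\nu,t+s)-v(x,t)=h\,\nu\cdot Dv+s\,v_t+O(h^2+s^2)$ together with $v_t\geq Aw$ to convert a temporal increment into a spatial one: this allows $\varphi$ to be taken slightly larger than $\delta$ while $\nu\cdot Dv$ remains positive, provided $s/h$ is selected from a range determined by $A,B$.

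Concretely, I would follow the scheme of Lemma 8.4 of \cite{acs2}. After the parabolic rescaling $(x,t)\mapsto(x,\mu t)$, which normalizes away the $A,B$-dependence of the time factor, the problem reduces to a standard interior flatness-improvement statement: a caloric function whose gradient lies in a thin spatial cone of aperture $\delta$ around $-e_n$ has its gradient lying, on a smaller parabolic cylinder, in a cone of aperture $b_0\delta$ around some new unit vector $\nu$. The direction $\nu$ is constructed from the space-time averaged tangential gradient $(I-e_n\otimes e_n)Dv/w$, and the gain $b_0<1$ comes from the fact that $\tau\cdot\nabla'v$ is a caloric function bounded by $C\delta w$ whose oscillation decays by a fixed factor on parabolically half-sized cylinders (Lemma~\ref{lem:C-C} applied after dividing by the strictly positive caloric function $w$).

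The main obstacle will be keeping the constants $b_0$ and $r_0$ uniform, dependent only on $A,B,n$. This requires matching the time horizon $\delta/\mu$ to the spatial scale $\delta$ in exactly the way prescribed by the hypothesis, and verifying that the quotient $(\tau\cdot\nabla'v)/w$ is a Hölder continuous function on the parabolically rescaled cylinder with estimates that do not degenerate as $\delta\to 0$. Once this is in place, choosing $\nu$ aligned with the value of that quotient at the center reduces the tangential oscillation by a universal fixed factor, which yields the desired enlargement exactly as in \cite{acs2}.
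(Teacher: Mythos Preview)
Your proposal is essentially the same approach as the paper's: the paper does not give a self-contained proof but simply states that ``the proof is parallel to that of Lemma 8.4 in \cite{acs2}'', and your sketch is precisely an outline of that argument, with the interior caloric structure, the quotient $(\tau\cdot\nabla'v)/w$, and the parabolic rescaling by $\mu$ all in the right places. One small remark: the oscillation decay for the quotient is not literally Lemma~\ref{lem:C-C} (which is elliptic), but the parabolic interior Harnack for the positive caloric function $w$ together with interior gradient estimates for caloric functions, exactly as in \cite{acs2}; otherwise your outline matches the intended proof.
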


Now we can proceed as in section 6 of \cite{cjk2} to obtain further regularity, using
Lemma~\ref{upperbound101} instead of the uniform upper bound on
$|Du|$ up to the free boundary.

\begin{theorem}
$\Gamma(v)$ is $C^1$ in space in $Q_{1/2}$. In particular,
three exist constants $l_0,C_0>0$ depending only on $L,n$ and $M$
such that for a free boundary point $(x_0,t_0)\in
\Gamma(v)$, $\Gamma(v)\cap B_{2^{-l}}(x_0,t_0)$ is a
Lipschitz graph with Lipschitz constant less than $\dfrac{C_0}{l}$
if $l\geq l_0$.
\end{theorem}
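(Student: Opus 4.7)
The plan is to iterate the cone enlargement lemma at dyadic scales centered at a free boundary point, mirroring the scheme of section 6 of \cite{cjk2}, but with the weaker interior time estimate from Lemma~\ref{upperbound101} replacing the standard $|u_t|\lesssim|Du|$ bound of \cite{acs1}. Fix $(x_0,t_0)\in\Gamma(v)\cap Q_{1/2}$ and consider the parabolic rescalings of $v$ around $(x_0,t_0)$. The base case of the iteration is the full monotonicity of $\tilde{u}$ with respect to the space-time cone $\mathcal{C}_1$ established just above the theorem, which transfers to $v$ since the two rescalings differ only by a time translation and a harmless constant shift.

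At the inductive step, I would apply the cone enlargement lemma to the rescaled function on a region $B_{1/6}(-\tfrac{3}{4}e_n)\times(-\delta/\mu,\delta/\mu)$, where $\delta$ denotes the current angle deficit of the spatial cone of monotonicity. The crucial input is the interior bound $|v_t|\leq a|Dv|^2$ from Lemma~\ref{upperbound101}, which supplies the hypothesis $A\leq v_t/(-e_n\cdot Dv)\leq B$ of the enlargement lemma, with both $A,B$ of order $a\lesssim r^{\beta}$. The lemma then returns a new cone whose deficit satisfies a contraction $\delta_{l+1}\leq b_0\delta_l$ along some improved unit direction $\nu_l$, up to an additive error coming from the time oscillation of the direction vectors $Dv^{\pm}/|Dv^{\pm}|$ evaluated near $\pm e_n$.

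Summing the angle deficits over dyadic scales yields the stated Lipschitz-graph property for $\Gamma(v)\cap B_{2^{-l}}(x_0,t_0)$. Because each enlargement gains a geometric factor $b_0$ in the deficit but incurs an additive cost of order $a$, the iteration produces a Dini-type rate rather than a Hölder one: at scale $2^{-l}$ the deficit is of order $1/l$, giving Lipschitz constant $\leq C_0/l$, as opposed to the $(1-c)^l$ decay that would yield full $C^{1,\alpha}$. This is the classical outcome when the time-oscillation penalty does not itself contract geometrically, and it delivers exactly the $C^1$-in-space regularity asserted, with spatial normal having logarithmic modulus of continuity.

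I expect the main obstacle to be justifying that Lemma~\ref{upperbound101} is actually compatible with the hypotheses of the cone enlargement lemma at every scale: in \cite{acs1}--\cite{acs2} one relies on the much stronger comparability $|u_t|\lesssim|Du^+|+|Du^-|$ up to the free boundary, whereas our bound is only interior and quadratic in $|Dv|$. The slow-time motion of $\Gamma(v)$ proved in Corollary~\ref{cor} is the key: it guarantees that the vectors $Dv^{\pm}/|Dv^{\pm}|$ near $\pm e_n$ vary by $o(1)$ across each dyadic time slab, which is exactly the input the ``common gain'' mechanism of \cite{acs2} requires. Once this compatibility is checked, the remaining details --- harmonic comparison across scales via Lemma~\ref{lem:JK} and the transfer of interior monotonicity up to the free boundary --- proceed along the lines of section 6 of \cite{cjk2}.
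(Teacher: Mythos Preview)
Your proposal is correct and follows essentially the same route as the paper: the paper's proof consists of the single sentence ``proceed as in section 6 of \cite{cjk2} \ldots\ using Lemma~\ref{upperbound101} instead of the uniform upper bound on $|Du|$ up to the free boundary,'' and your outline is a faithful unpacking of that scheme, including the correct identification of the $1/l$ (Dini-type) decay coming from the additive $O(a)$ error in each iteration step.
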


\vspace{10pt}

$\circ$ {\it Regularity in time}

\vspace{10pt}

Lastly, proceeding as in section 7-8 of \cite{cjk2} yields the
differentiability of $\Gamma(v)$ in time. The main step in the
argument is the following proposition: the statement and its proof
is parallel to those of Theorem 7.2 in \cite{cjk2}.

\begin{proposition}
There exist constants $l_0>0$ and $1<\gamma<2$ depending only on
$L,n,M$ such that for $(x_0,t_0)\in\Gamma(v)\cap Q_1$, if
$l>l_0$ then $\Gamma(v)\cap B_{2^{-l}}(x_0,t_0)$ is a
Lipschitz graph with Lipschitz constant less than $l^{-\gamma}$.
\end{proposition}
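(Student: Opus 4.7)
The plan is to prove the estimate by a dyadic iteration in scale that upgrades the $C^1$ bound from the previous theorem to the sharper $l^{-\gamma}$ bound, using the flux control supplied by Lemma~\ref{upperbound101}. Fix $(x_0,t_0)\in\Gamma(v)\cap Q_1$ and for each integer $l\geq l_0$ introduce the rescaled function
\[
v_l(x,t):= 2^{l}\,v\bigl(x_0+2^{-l}x,\; t_0+2^{-l}t\bigr).
\]
The previous theorem supplies, for every such $l$, a unit vector $e_l$ and a spatial cone of monotonicity $W_x(e_l,\theta_l^x)$ for $v_l$ whose aperture deficit $\e_l:=\pi/2-\theta_l^x$ is at most $C/l$. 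I would then show by induction that in fact $\e_l\leq l^{-\gamma}$ for some $\gamma>1$ depending only on $L$, $n$, $M$.

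The inductive step combines the interior enlargement-of-cone lemma stated just before the proposition with the boundary-propagation machinery of Sections 6--7 of \cite{cjk2}. At scale $2^{-l}$, Lemma~\ref{upperbound101} gives $|\partial_t v_l|\leq Ca\,|Dv_l|^2$ on the interior balls $B_{1/6}(\pm\tfrac{3}{4}e_l)$, so the ratio $B$ appearing in that lemma is bounded by $Ca$. Applying the enlargement lemma with $\delta=\e_l$ produces a unit vector $\nu_l$ with $\alpha(Dv_l,\nu_l)\leq b_0\e_l$ on a slightly smaller interior ball, where $b_0<1$ is a fixed dimensional constant. The resulting interior improvement of the cone direction is then transported to an improvement of the full monotonicity cone up to the free boundary, exactly as in Sections 6--7 of \cite{cjk2}, exploiting the fact that $\Gamma(v_l)$ is already a Lipschitz graph with small constant $\leq C/l$ and that $v_l^{\pm}$ obey the Harnack and backward Harnack inequalities of Section 4 (Lemmas~\ref{laterharnack} and~\ref{backward}).

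The loss incurred in this interior-to-boundary transfer is of order $\e_l/l$, coming from the harmonic-type decay of the deficit over the distance from the interior balls to the free boundary, while the gain in the interior is $(1-b_0)\e_l$. Combining the two yields a recurrence
\[
\e_{l+1}\leq \Bigl(1-\frac{c}{l}\Bigr)\e_l
\]
for a constant $c>1$ depending only on $L$, $n$, $M$. Iterating from the base step $\e_{l_0}\leq C/l_0$ produces $\e_l\leq C\prod_{k=l_0}^{l-1}(1-c/k)\leq Cl^{-c}$, which is the desired bound with $\gamma:=c>1$, since the Lipschitz constant of $\Gamma(v_l)$ is controlled by the aperture deficit.

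The main obstacle is securing the constant $c>1$ in the recurrence, rather than merely $c=1$ which would only reproduce the $C^1$ bound. This is exactly where the extra smallness in Lemma~\ref{upperbound101}, with its factor $a=C_0r\ll 1$ beyond what any bare Lipschitz bound on $v_t$ would give, makes the time cone in the enlargement lemma much narrower than the space cone; that extra narrowness feeds a genuine further gain into the spatial cone at each dyadic step and pushes the improvement constant past $1$. The precise quantitative bookkeeping of this gain follows the argument of Theorem 7.2 of \cite{cjk2} verbatim, with the only substitution being the use of the two-phase Harnack and backward Harnack inequalities in place of their one-phase counterparts.
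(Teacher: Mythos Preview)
The paper itself gives no proof here: it simply records that ``the statement and its proof is parallel to those of Theorem~7.2 in \cite{cjk2}.'' Your outline --- dyadic rescaling, interior cone enlargement via the lemma immediately preceding the proposition, propagation of the interior gain to the free boundary as in Sections~6--7 of \cite{cjk2}, and the use of Lemma~\ref{upperbound101} to push the improvement exponent past~$1$ --- is exactly the scheme the paper is pointing to, with the two-phase Harnack and backward Harnack inequalities substituted for their one-phase counterparts. So in terms of approach you are aligned with the paper.

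One piece of bookkeeping needs to be repaired. You write that the interior gain shrinks the defect to $b_0\e_l$ with a \emph{fixed} $b_0<1$, and that the interior-to-boundary transfer costs $O(\e_l/l)$. Those two statements combine to $\e_{l+1}\le (b_0+C/l)\e_l$, which is geometric decay and is strictly stronger than the $l^{-\gamma}$ bound you are after --- and also stronger than what the machinery actually delivers. In the \cite{cjk2}-type iteration the \emph{fraction} of the interior gain that survives propagation to the boundary is itself of order the current flatness, i.e.\ of order $1/l$; this is what produces $\e_{l+1}\le (1-c/l)\e_l$ rather than a fixed contraction. Your recurrence and its solution $\e_l\lesssim l^{-c}$ are correct, and your identification of the smallness $a=C_0r\ll 1$ in Lemma~\ref{upperbound101} as the source of $c>1$ is the right point; only the verbal split into ``gain $(1-b_0)\e_l$'' versus ``loss $\e_l/l$'' should be restated so that the arithmetic actually yields the recurrence you quote.
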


Above proposition and the blow-up argument in section 8 of \cite{cjk2}
yields the desired result:

\begin{theorem}\label{thm:last}
$\Gamma(v)$ is differentiable in time. Moreover
$$
C^{-1} \leq |Dv|(x,t)\leq C\hbox{ in } \Omega(\tilde{u})\cap
Q_{1/2},
$$
where $C=C(M,n)$.
\end{theorem}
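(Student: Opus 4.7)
The plan is to deduce Theorem~\ref{thm:last} from the preceding Proposition by a parabolic blow-up argument at each free boundary point, essentially following Section 8 of \cite{cjk2}. The Proposition already gives the decisive quantitative input: at any $(x_0,t_0)\in\Gamma(v)\cap Q_1$ and at every dyadic scale $2^{-l}$, $\Gamma(v)\cap B_{2^{-l}}(x_0,t_0)$ is a Lipschitz graph with slope $\leq l^{-\gamma}$ for some $\gamma>1$. Since $\sum l^{-\gamma}<\infty$, the sequence of approximating normal directions $\nu_l\in S^n$ (in space-time) associated with the graph at scale $2^{-l}$ is Cauchy: comparing the graphs at scales $2^{-l}$ and $2^{-(l+1)}$ on the overlap forces $|\nu_{l+1}-\nu_l|\lesssim l^{-\gamma}$. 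Hence $\nu_l\to\nu_\infty(x_0,t_0)$, which is the desired space-time normal, giving pointwise differentiability of $\Gamma(v)$ in time, with $\nu_\infty$ depending continuously on $(x_0,t_0)$.

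For the two-sided bound on $|Dv|$, I would then feed the space-time Lipschitz graph structure into the classical elliptic/parabolic machinery. Because $\Gamma(v)$ is locally Lipschitz in space-time with small constant, Lemmas~\ref{lem:acs96} and~\ref{lem:almostharmonic} show that $v^{\pm}$ are $O(r^a)$-close to positive harmonic functions on each small ball touching the free boundary. Combined with the comparability $v^{\pm}(x_0\mp le_n,t)\sim 1$ on the axis, which follows from (\ref{est101}) together with Lemmas~\ref{laterharnack} and~\ref{backward}, Lemma~\ref{lem:D} gives a two-sided ratio estimate against the model harmonic profile, and Lemma~\ref{lem:2.11} converts this into the bound $C^{-1}\leq |Dv^{\pm}|\leq C$ along $\Gamma(v)$. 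Interior parabolic estimates together with Lemma~\ref{regularity} propagate this bound to all of $\Omega(\tilde u)\cap Q_{1/2}$.

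The main obstacle will be the blow-up identification of the velocity in the absence of the classical inequality (\ref{00}). In \cite{acs2} the bound $|u_t|\leq C(|Du^+|+|Du^-|)$ is used precisely to show that the blow-up limits $v_l(x,t):=2^l[v(x_0+2^{-l}x,\,t_0+2^{-l}t)-v(x_0,t_0+2^{-l}t)]$ are global two-phase solutions whose free boundary is a hyperplane and whose positive and negative gradients are constants, which then pins down $\nu_\infty$ via the free boundary condition $V=a(|Dv^+|-|Dv^-|)$. In our setting one has to replace (\ref{00}) by Lemma~\ref{upperbound101}, which gives $|\tilde u_t|\leq Ca$ away from $\Gamma$; rescaling, this bound is exactly what is needed to keep $\partial_t v_l$ bounded on compact subsets of $\Omega(v_l)$, so that along a subsequence $v_l\to v_\infty$ locally uniformly with $v_\infty$ caloric in its phases and harmonic in each fixed time slice. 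The ACF monotonicity formula (Lemma~\ref{monotonicityformula}) then ensures nondegeneracy of both phases of $v_\infty$, forcing $v_\infty$ to be the standard two-plane solution. At this point the free boundary condition determines the velocity of the limit hyperplane, which closes both the differentiability statement in time and the gradient bounds.
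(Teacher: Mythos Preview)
Your proposal is correct and follows essentially the same route as the paper: the paper's entire proof of Theorem~\ref{thm:last} is the one sentence ``Above proposition and the blow-up argument in section 8 of \cite{cjk2} yields the desired result,'' and what you have written is a faithful elaboration of that blow-up procedure, including the correct identification of Lemma~\ref{upperbound101} as the substitute for \eqref{00}. Your separate derivation of the two-sided gradient bound via Lemmas~\ref{lem:almostharmonic}, \ref{lem:D} and \ref{lem:2.11} is a slight variant (the paper folds this into the same blow-up), but it works because the summable Lipschitz decay $l^{-\gamma}$ from the preceding Proposition makes $\Gamma(v)$ of class $C^{1,\mathrm{Dini}}$, which is exactly the regularity needed for a nondegenerate Hopf-type bound at the boundary.
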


\section{General case: solutions with Locally Lipschitz Initial data}

In this section, we present how to extend the result of the main
theorem to solutions with locally Lipschitz initial data. Our
setting is as follows. Suppose $\Omega_0$ is a bounded region in
$B_R(0)$. Suppose $u$ is a solution of (ST2) with $u_0\geq -1$,
$u_0=-1$ on $B_R(0)$ and $u_0 \leq M_0$. Further suppose that
$\Omega_0$ is locally Lipschitz: that is, for any $x_0\in\Gamma_0$,
$\Gamma_0 \cap B_1(x_0)$ is Lipschitz with a Lipschitz constant $L
\leq L_n$.

\begin{figure}
\center{\epsfig{file=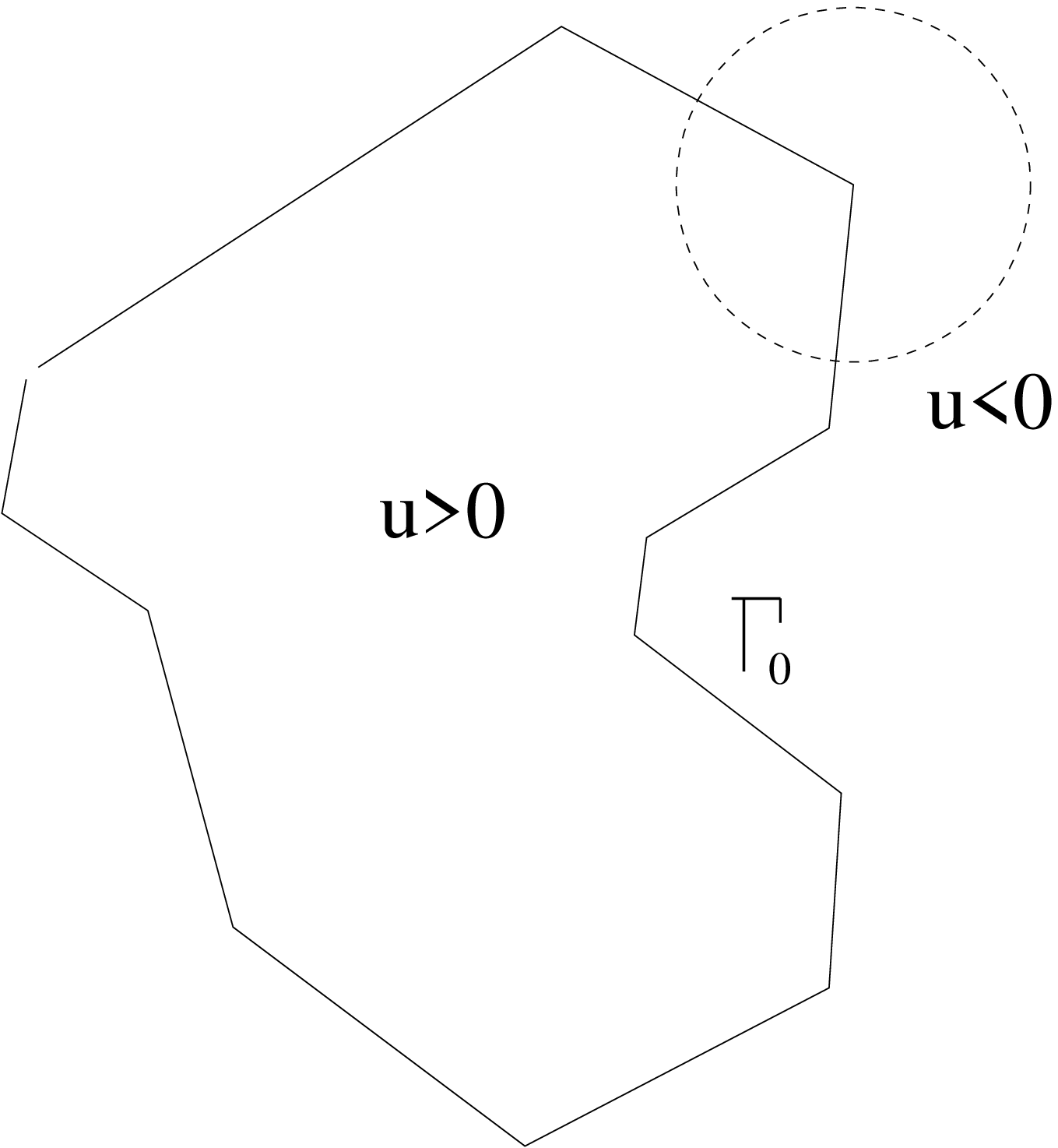,height=2.0in}} \center{Figure
4: Locally Lipschitz initial domain}
\end{figure}

Let the initial data $u_0$ solve  $\Delta u_0 =0$ in $B_1(x_0)$. Then we claim that the
parallel statements as in Theorem~\ref{maintheorem} hold in
$B_{2d_0}(x_0) \times [t(x_0,d_0)/2, t(x_0, d_0)]$, where $d_0$ is a
constant depending on $n$ and $M_0$. More precisely:

\begin{theorem}\label{maintheorem2}
Suppose $u$ is a solution of (ST2) with initial data $u_0$ such that
$-1 \leq u_0 \leq M_0$. Further suppose that for $x_0 \in \Gamma_0$,
$\Gamma_0 \cap B_1(x_0)$ is Lipschitz with a Lipschitz constant $L
\leq L_n$ and $\Delta u_0 =0$ in the positive and negative phases of
$u_0$ in $B_1(x_0)$. Then there exists a constant $d_0>0$ depending
on
 $n$ and $M_0$ such that (a) and (b) of
Theorem~\ref{maintheorem} hold for $u$ and $d \leq d_0$.

\end{theorem}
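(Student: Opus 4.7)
The plan is to reduce Theorem~\ref{maintheorem2} to Theorem~\ref{maintheorem} by a localization argument. The key observation is that the conclusion concerns only the cylinder $B_{2d}(x_0) \times [t(x_0,d)/2, t(x_0,d)]$ with $d \leq d_0$, and both the spatial scale $d_0$ and the time scale $t(x_0,d_0) \leq d_0^{5/6}$ are small. Finite propagation of the two free boundaries therefore confines the problem to a small neighborhood of $x_0$; inside that neighborhood, only the Lipschitz portion $\Gamma_0 \cap B_1(x_0)$ is visible. We accordingly build a globally star-shaped competitor agreeing with $u_0$ near $x_0$, apply Theorem~\ref{maintheorem} to it, and transfer the conclusion back to $u$.

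Step 1 (construction). After rotating so $\Gamma_0 \cap B_1(x_0)$ is a graph $x_n = f(x')$ with $\mathrm{Lip}\,f \leq L_n$, pick $z_0 = x_0 - \tfrac12 e_n$ and a ball $B_{r_0}(z_0) \subset \Omega_0$. Build a bounded domain $\tilde\Omega_0$ that (i) coincides with $\Omega_0$ on $B_{1/2}(x_0)$, (ii) is globally star-shaped with respect to $B_{r_0}(z_0)$, and (iii) has Lipschitz boundary with constant $\leq L_n$. Extend $u_0$ to $\tilde u_0$ so that $\tilde u_0 \equiv u_0$ on $B_{1/2}(x_0)$, $\tilde u_0$ is harmonic in each phase near $\partial\tilde\Omega_0$, $-1 \leq \tilde u_0 \leq M_0$, and the bounds (I-c)--(I-d) hold. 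This interpolation mirrors the perturbation procedure carried out in Step 5 of the proof of Proposition~\ref{pri-reg-bad}.

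Step 2 (comparison and finite propagation). Let $\tilde u$ be the solution of (ST2) with data $\tilde u_0$. Since $(\tilde\Omega_0, \tilde u_0)$ satisfies (I-a)--(I-d), Theorem~\ref{maintheorem} yields the required structure for $\tilde u$ in $B_{2d}(x_0) \times [t(x_0,d)/2, t(x_0,d)]$. To transfer this to $u$, sandwich both $u$ and $\tilde u$ between the one-phase solutions $\pm v^\pm$ and $\pm\tilde v^\pm$ of (ST1) constructed from $u_0^\pm$ and $\tilde u_0^\pm$, as in the proof of Lemma~\ref{distance0}. Applying Lemma~\ref{good} to each of these one-phase barriers shows that every involved free boundary moves at most $C\,t(x_0,d_0)^{1/(2-\beta)} \leq C d_0^{5/7}$ during $[0,t(x_0,d_0)]$, which is much smaller than $1/4$ once $d_0$ is chosen sufficiently small relative to $n$ and $M_0$. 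Since $u_0 \equiv \tilde u_0$ on $B_{1/2}(x_0)$ and no disturbance from outside reaches $B_{1/4}(x_0)$ in this time, the comparison principle Theorem~\ref{thm:cp} applied in $B_{1/2}(x_0) \times [0,t(x_0,d_0)]$ forces $u \equiv \tilde u$ on the cylinder of interest, and the conclusions of Theorem~\ref{maintheorem} for $\tilde u$ transport directly to $u$.

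The main obstacle is that, unlike the one-phase case, in (ST2) the two phases communicate through the free-boundary velocity, so a priori disturbances outside $B_{1/2}(x_0)$ could travel inward via either phase. The one-phase sandwich of Lemma~\ref{distance0} is precisely what decouples this coupling and reduces the propagation question to the one-phase bound of Lemma~\ref{good}; this is the only conceptually new ingredient. Beyond this, the arguments of Sections~3--5 apply verbatim to $\tilde u$ in place of $u$, since they were stated relative to a fixed boundary point and use star-shapedness only globally through Lemmas~\ref{star-shaped} and~\ref{regularity}, whose hypotheses are met by $\tilde u_0$ by construction.
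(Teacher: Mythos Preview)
The overall localization strategy is right, and Step~1 matches the paper's construction of a star-shaped competitor $\tilde u$ (the paper calls it $v$). The gap is in Step~2, where you conclude ``$u\equiv\tilde u$ on the cylinder of interest'' from the comparison principle. This cannot hold. In each phase the solution satisfies the heat equation, which has infinite speed of propagation: even though the free boundaries of $u$ and $\tilde u$ both stay inside $B_{1/4}(x_0)$ for $0\le t\le t(x_0,d_0)$, the caloric values on $\partial B_{1/2}(x_0)$ differ for every $t>0$ because $u_0$ and $\tilde u_0$ disagree outside $B_{1/2}(x_0)$. Theorem~\ref{thm:cp} requires ordered lateral boundary data on $\partial B_{1/2}(x_0)\times[0,t(x_0,d_0)]$, and you have neither $u\le\tilde u$ nor $\tilde u\le u$ there; so no equality, and not even an inequality, follows. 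The one-phase sandwich from Lemma~\ref{distance0} controls only the location of $\Gamma(u)$, not the values of $u$ away from the boundary, so it does not decouple the lateral influence as you claim.

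What the paper actually proves is weaker and harder: not $u=\tilde u$, but that the level sets of $u$ and of the star-shaped solution $v$ are $\varepsilon d_0$-close in $B_{2d_0}(x_0)$ up to time $t(x_0,d_0)$. This is enough to transfer the $\varepsilon$-monotonicity in $W_x$ and $W_t$ from $v$ to $u$, after which the machinery of Sections~3--5 runs for $u$ itself. To obtain that closeness the paper (i) passes to a much smaller ball $B_2=B_{\varepsilon^{k_0}}(x_0)$ and uses the boundary Harnack principle (Lemma~\ref{lem:JK}) to get $1-\varepsilon\le u_0/v_0\le 1+\varepsilon$ there, and (ii) builds genuine sub- and supersolutions in the strip between $H^+$ and $H^-$ by bending $\Gamma(v)$ near the lateral boundary $\partial B_2$ via the conformal maps $\hat\Phi$, $\breve\Phi$ from \cite{ck}, precisely to absorb the uncontrolled lateral data. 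Your proposal is missing both ingredients; in particular, without some device to handle $\partial B_{1/2}(x_0)$ you cannot close the comparison.
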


The proof of the above theorem is parallel to that
of Theorem~\ref{maintheorem} in section 5, after proving the
following lemma.

\begin{lemma}
There exists a solution $v$ of (ST2) with a star-shaped initial data
such that the level sets of $u$ and $v$ are $\e d_0$-close to each
other in $B_{2d_0}(x_0)$ up to the time $t(x_0,d_0;u)$, where
$d_0>0$ is sufficiently small. In particular, $u$ and $\Gamma(u)$ is
$\e$-monotone in a cone of $W_x$ and $W_t$ in $B_{2d_0}(x_0) \times
[t(x_0, d_0)/2,t(x_0,d_0)]$.
\end{lemma}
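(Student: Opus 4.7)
The plan is to reduce the locally Lipschitz situation to the already-treated star-shaped one by constructing an auxiliary initial configuration which coincides with $u_0$ in a neighborhood of $x_0$ but is globally star-shaped with small Lipschitz constant, and then showing that the viscosity solutions with the two initial data cannot separate before the (very short) time $t(x_0,d_0;u)$.

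First I would build the comparison solution $v$. Since $\Gamma_0\cap B_1(x_0)$ is a Lipschitz graph with constant $L\leq L_n$, and $u_0$ is harmonic near $\Gamma_0$ in $B_1(x_0)$, one can extend the graph of $\Gamma_0$ outside $B_{3/4}(x_0)$ to a globally defined hypersurface $\tilde\Gamma_0$ which bounds a domain $\tilde\Omega_0$ star-shaped with respect to some small ball $B_{r_0}(z_0)$ far from $x_0$, still with Lipschitz constant $\leq L_n$. One can then replace $u_0$ outside $B_{3/4}(x_0)$ by a function $v_0$ that agrees with $u_0$ on $B_{1/2}(x_0)$, satisfies (I-a)-(I-d) with respect to $\tilde\Omega_0$, and equals $-1$ on $\partial B_R(0)$. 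Let $v$ be the viscosity solution of (ST2) with initial datum $v_0$; Lemma \ref{star-shaped} and Corollary \ref{cor} apply to $v$.

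Next I would show the closeness of the free boundaries. Choose $d_0$ so small that $t(x_0,d_0;u)\leq d_0^{5/6}\ll 1$. On this time interval, we must prevent the exterior perturbation (the disagreement of $u_0$ and $v_0$ on $B_R(0)\setminus B_{1/2}(x_0)$) from reaching $B_{2d_0}(x_0)$ at a scale larger than $\varepsilon d_0$. The natural tool is Theorem \ref{thm:cp}: I would sandwich $u$ between $v_\sigma^\pm(x,t):=v((1\pm\sigma)(x-x_0)+x_0,(1\pm\sigma)^2 t)$ with $\sigma=d_0^{1-\delta}$, mimicking Step 3 of the proof of Lemma \ref{star-shaped}. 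In $B_{3/4}(x_0)\setminus B_{1/2}(x_0)$, the distance estimate Lemma \ref{distance0} (together with Lemma \ref{good} applied to the one-phase barriers $\pm w^{\pm}$) shows that $\Gamma_t(u)$ and $\Gamma_t(v)$ both stay within an $o(d_0)$-neighborhood of $\Gamma_0$ for $0\leq t\leq d_0^{5/6}$, and standard interior heat-equation estimates yield $|u-v|=o(d_0)$ there. This lateral ordering plus the identical initial data in $B_{1/2}(x_0)$ then lets the comparison principle propagate $v_\sigma^- \leq u \leq v_\sigma^+$ inward to $B_{2d_0}(x_0)$, giving $\varepsilon d_0$-closeness of level sets.

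Once the level-set closeness is established, the inheritance of $\varepsilon$-monotonicity is immediate: $v$ is $\varepsilon$-monotone in cones $W_x(\theta_x,e_n)$ and $W_t(\theta_t,\nu)$ in $B_{2d_0}(x_0)\times[t(x_0,d_0;v)/2,t(x_0,d_0;v)]$ by Lemma \ref{star-shaped} and Corollary \ref{cor}, and since $t(x_0,d_0;u)\sim t(x_0,d_0;v)$ by the closeness of level sets, a shift of the cone aperture by $O(d_0^{1-\delta})$ transfers the monotonicity to $u$ and $\Gamma(u)$ on the stated space-time region. The main obstacle I anticipate is the second paragraph: unlike the one-phase case, in (ST2) one cannot decouple the motion of $\Gamma(v)$ from the competing fluxes, so pinning down the lateral separation between $\Gamma(u)$ and $\Gamma(v)$ in the annulus $B_{3/4}(x_0)\setminus B_{1/2}(x_0)$ requires applying the one-phase regularizations of $\pm v^\pm$ (Theorem \ref{thm:ckmain}) on both sides and invoking Lemma \ref{monotonicityformula} to rule out simultaneous large fluxes — essentially repeating the bad-ball/good-ball dichotomy of Section 4 on the auxiliary ring.
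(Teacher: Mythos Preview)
Your outline has the right overall shape, but two of the three steps contain genuine gaps, and the paper in fact proceeds differently at both points.

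\medskip
\textbf{Construction of $v_0$.} You assert that one can take $v_0$ to \emph{equal} $u_0$ on $B_{1/2}(x_0)$ and still satisfy (I-a)--(I-d) globally. Condition (I-a) requires the \emph{function} $v_0$ (not just the domain) to be star-shaped with respect to a ball, i.e.\ monotone along every ray from that ball. There is no reason the given harmonic function $u_0$ already has this ray-monotonicity on $B_{1/2}(x_0)$, so your patched $v_0$ need not be star-shaped. The paper avoids this by \emph{not} matching $u_0$ exactly: it builds the star-shaped extension $\Omega'$ with $\Omega'\cap B_1=\Omega_0\cap B_1$, lets $v_0^{\pm}$ be the \emph{global} harmonic functions on $\Omega'\setminus K$ and $B_R\setminus\Omega'$ (so star-shapedness of $v_0$ follows from the maximum principle), and then invokes boundary Harnack (Lemma~\ref{lem:JK}) to get $1-\e\le u_0/v_0\le 1+\e$ on a much smaller ball $B_2=B_{\e^{k_0}}(x_0)$. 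So the comparison data are only \emph{close}, never equal.

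\medskip
\textbf{The barrier step.} Your proposed barriers $v_\sigma^{\pm}(x,t)=v((1\pm\sigma)(x-x_0)+x_0,(1\pm\sigma)^2t)$ dilate about $x_0\in\Gamma_0$. Star-shapedness of $v$ gives ordering only for dilations about points of the inner ball $K$, not about a free-boundary point; your $v_\sigma^{\pm}$ therefore do not sandwich $u$ even at $t=0$. Moreover, your lateral estimate ``$|u-v|=o(d_0)$ on $B_{3/4}\setminus B_{1/2}$'' is asked on an annulus of size $O(1)$ that contains the free boundary, while $d_0$ is tiny; neither interior heat estimates nor the one-phase distance bounds give control at that precision there. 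The paper instead works in the small ball $B_2$: it fixes a thin strip $S$ between $H^{\pm}=(\Gamma_0(v)\pm\e^{k_0+k_1}e_n)\cap B_2$, puts Dirichlet data $(1\pm\e)v_0$ on $H^{\pm}$ (legitimate because of the $u_0/v_0$ estimate together with a time-stability bound for $v$ on $H^{\pm}$), and handles the lateral edge $\partial B_2\cap S$ by bending $\Gamma_t(v)$ with the conformal maps $\hat\Phi,\breve\Phi$ of \cite{ck}, so that the resulting $w_1,w_2$ are genuine super/subsolutions of (ST2) with $w_2\le u\le w_1$ and with level sets $\e d_0$-close to those of $v$. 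This is the ``perturbation method'' from Lemma~2.4 of \cite{cjk1}, and it replaces both your dilation barriers and your annulus estimate.

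\medskip
Your final paragraph (transferring $\e$-monotonicity from $v$ to $u$ once the level-set closeness is known) is fine and matches the paper.
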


Even though our equation is nonlocal, the behavior of far-away
region would not affect much the behavior of solution in the unit
ball, if the solution behaves ``reasonably'' outside the unit
ball. For example, in the star-shaped case, we know at least that
the free boundary is almost locally Lipschitz at each time. In the
locally Lipschitz case, we control the solution by putting an
upper bound $M_0$ on the initial data $u_0$. We will argue that in
a sufficiently small subregion of $B_1(x_0) \times [0,1]$, the
solution is mostly determined by the local initial data in
$B_1(x_0)$. The perturbation method in the proof of Lemma 2.4 in
\cite{cjk1} will be adopted here.  Denote  $B_1(x_0) =B_1$.

\vspace{5pt}

1. Construct a star-shaped region $\Omega'\subset B_R(0)$ such
that
\begin{itemize}
\item[(a)] $\Omega'\cap B_1 = \Omega_0\cap B_1$.
\item[(b)]$\Omega'$ is star-shaped with respect to every $x\in
K\subset\Omega'$ for a sufficiently large ball $K$.
\end{itemize}
Let $v_0^+$ be the harmonic function in $\Omega'-K$ with boundary
data $1$ on $\partial K$, and $0$ on $\partial\Omega'$. Next, let
$v_0^-$ be the harmonic function in $B_R(0)-\Omega'$ with boundary
data $1$ on $\partial B_R(0)$, and $0$ on $\partial\Omega'$. Let
$B_2$ be a concentric ball in $B_1$ with the radius of $\e^{k_0}$,
i.e.,
$$
B_2=B_{\e^{k_0}}(x_0) \subset B_1(x_0)=B_1.
$$
 Let $k_0$ be sufficiently large. Then by Lemma~\ref{lem:JK},
a normalization of $v_0^\pm$ by a suitable constant multiple
yields that for any $x\in B_2$
\begin{equation}\label{initial}
1-\e \leq \dfrac{u_0(x)}{v_0(x)} \leq 1+\e.
\end{equation}
 Let $v$ solve (ST2) with
initial data $v_0=v_0^+ -v_0^-$. Then Theorem~\ref{maintheorem}
applies for $v$ since $v_0$ is star-shaped with respect to $K$.

 For the proof of the claim, we will
find a sufficiently small $d_0$ such that $v$ is $\e d_0$-close to
$u$ in $B_{2d_0}(x_0)$ up to the time $t(x_0,d_0)$. More
precisely, we will construct a supersolution $w_1$ and a
subsolution $w_2$ of (ST2) such that in some small ball
$B_h(x_0)$, we have
 $$
 w_2\leq u\leq w_1
 $$
and the level sets of $w_1$ and $w_2$ are $h\e$ close to the level
sets of $v$.

\medskip

2. Let $k_1$ and $k_2$ be large constants which will be determined
later. Define
$$
 H^{\pm}:= (\Gamma_0(v) \pm
\e^{k_0+k_1} e_n)\cap B_2.
$$
Let
$$
d_0:= \e^{k_0+k_1+k_2}.
$$
and let $t(d_0):= t(x_0, d_0; v)=t(x_0, d_0; u)$.
  First note that
$$t(d_0) \geq d_0^{2-\beta}\geq \e^{7(k_0+k_1+k_2)/6}.$$
 Hence for $v$ to be almost
harmonic in a scale much larger than $\e^{k_0+k_1}$, we need
$\sqrt{t(d_0)} >\e^{k_0}$, i.e.,
$$
 7(k_0+k_1+k_2)/12 < k_0.
$$
Observe that by the construction of $H^{\pm}$ and $d_0$,
\begin{equation} \label{localization}
\sqrt{ t(d_0)} \gg {\rm radius}(B_2) \gg
 {\rm dist}(H^{\pm}, \Gamma_0) \gg  \max_{x \in \Gamma_{t} \cap B_2,0\leq t\leq t(d_0) } {\rm dist}(x, \Gamma_0)
\end{equation}
where the last inequality follows from Lemma~\ref{good} if we choose
$k_2 \geq 2k_1$. If $k_2$ is  sufficiently large, then one can prove
from the last inequality of (\ref{localization}) and the bound on
$v_t$ that
\begin{equation}\label{claim001}
1 -\e \leq \dfrac{|v(x,t)|}{|v_0(x)|} =
\dfrac{|v(x,t)|}{|u_0(x)|}\leq 1+\e \hbox{ on } H^{\pm} \times
[0,t(d_0)].
\end{equation}

\medskip

3. We do have an estimate, Lemma~\ref{good}, on how far the
boundaries move away for the local one-phase case. If we take the
one-phase versions with initial data $u_0^+$ and $u_0^-$, and
compare  with $u$, then  we obtain that   $\Gamma(u) \cap B_2$ stays
in the $d_0^{\frac{2-\alpha}{2-\beta}}$-neighborhood of
$\Gamma_0(u)\cap B_2$ up to the time $t(d_0)=t(x_0, d_0)$. In other
words, the free boundary of $u$ moves less than $d_0^{5/7}$ in $B_2$
up to the time $t(d_0)$.

\vspace{10pt}

Now we let $S$ be the region between $H^+$
and $H^-$. To construct a sub (or super) solution in S, we take the
fixed boundary data $(1-\e)v_0(x)$  on $H^{-}$ (or $H^{+}$), and
$(1+\e)v_0(x)$  on $H^{+}$ (or $H^{-}$). To control the effect from
the side $\partial B_2 \cap S$, we bend the free boundary
$\Gamma_t(v)$ by $d_0^{5/7}$ on each side of $\partial B_2 \cap S$,
using the conformal mapping $\hat{\Phi}$ (or $\breve{\Phi}$). (See
section 4 of for the definition of $\hat{\Phi}$ and $\breve{\Phi}$.)
More precisely, we bend the free boundary of $v$ downward (or
upward) using the conformal map $\hat{\Phi}$ (or $\breve{\Phi}$),
and solve the heat equation in there. Then similar arguments as in
Lemmas 4.1 and 4.3 of \cite{ck} yield that the solution is still
(almost) a supersolution, and it stays close to the original
solution.

\end{document}